\DeclareMathOperator{\id}{id}
\DeclareMathOperator{\im}{im}
\newcommand{\NN}{\mathbb{N}}
\newcommand{\ZZ}{\mathbb{Z}}
\newcommand{\RR}{\mathbb{R}}
\newcommand{\CC}{\mathbb{C}}
\newcommand{\Ad}{\mathrm{Ad}}
\newcommand{\Cut}{\mathrm{Cut}}
\newcommand{\Cuttil}{\widetilde{\Cut}}
\newcommand{\Isom}{\mathrm{Isom}}
\newcommand{\D}{\mathcal{D}}
\renewcommand{\gg}{\mathfrak{g}}
\newcommand{\hh}{\mathfrak{h}}
\newcommand{\mm}{\mathfrak{m}}
\renewcommand{\aa}{\mathfrak{a}}
\newcommand{\kk}{\mathfrak{k}}
\DeclareMathOperator{\GC}{\mathsf{GC}}
\DeclareMathOperator{\TC}{\mathsf{TC}}
\DeclareMathOperator{\cat}{\mathsf{cat}}
\DeclareMathOperator{\secat}{\mathsf{secat}}
\DeclareMathOperator{\Exp}{Exp}
\renewcommand{\setminus}{\smallsetminus}
\theoremstyle{plain}
\newtheorem{theorem}{Theorem}[section]
\newtheorem{prop}[theorem]{Proposition}
\newtheorem{lemma}[theorem]{Lemma}
\newtheorem{cor}[theorem]{Corollary}  
\newtheorem*{thm}{Theorem}
\theoremstyle{definition}
\newtheorem{definition}[theorem]{Definition}
\theoremstyle{remark}
\newtheorem{remark}[theorem]{Remark}
\newtheorem{example}[theorem]{Example}
\numberwithin{equation}{section}
\begin{document}
\setlength{\parindent}{0.cm}

\title{Geodesic complexity via fibered decompositions of cut loci}

\author{Stephan Mescher}
\address{Institut f\"ur Mathematik \\ Martin-Luther-Universit\"at Halle-Wittenberg \\ Theodor-Lieser-Strasse 5 \\ 06120 Halle (Saale) \\ Germany}
\email{stephan.mescher@mathematik.uni-halle.de}

\author{Maximilian Stegemeyer}
\address{Max Planck Institute for Mathematics in the Sciences, Inselstrasse 22, 04103 Leipzig, Germany} 
\address{Mathematisches Institut, Universit\"at Leipzig, Augustusplatz 10, 04109 Leipzig, Germany}
\email{maximilian.stegemeyer@mis.mpg.de}
\date{\today}
\maketitle

\begin{abstract}
The geodesic complexity of a Riemannian manifold is a numerical isometry invariant that is determined by the structure of its cut loci. 
In this article we study decompositions of cut loci over whose components the tangent cut loci fiber in a convenient way. We establish a new upper bound for geodesic complexity in terms of such decompositions. As an application, we obtain estimates for the geodesic complexity of certain classes of homogeneous manifolds. In particular, we compute the geodesic complexity of complex and quaternionic projective spaces with their standard symmetric metrics. 
\end{abstract}

\tableofcontents

\section{Introduction}

The geodesic complexity of a complete Riemannian manifold is an integer-valued isometry invariant. It is given as a geometric analogue of the notion of topological complexity as introduced by M. Farber in \cite{farber:2003}.
Geodesic complexity was originally defined by D. Recio-Mitter in the more general framework of metric spaces in \cite{reciomitter:2021}.
Given a complete Riemannian manifold $(M,g)$ we denote its space of length-minimizing geodesic segments by $GM$, seen as a subspace of the path space $C^0([0,1],M)$ with the compact-open topology.
Consider the endpoint evaluation map $$p \colon GM\to M\times M, \qquad p(\gamma) = (\gamma(0),\gamma(1)).$$ 

The geodesic complexity of $(M,g)$, denoted by $\GC(M,g)$, is defined as the smallest integer $k$ for which there exists a decomposition of $M\times M$ into locally compact subsets $A_1,\ldots, A_k$ with each $A_i$ admitting a continuous local section of $p$.

In the same way that topological complexity is motivated by a topological abstraction of the motion planning problem from robotics, geodesic complexity is motivated by an abstract notion of \emph{efficient} motion planning. Sections of $p$ can be seen as geodesic motion planners since they assign to a pair of points $(p,q)\in M\times M$ a length-minimizing path connecting these two points.

As noted by Recio-Mitter in \cite[p. 144]{reciomitter:2021}, the main problem in determining the geodesic complexity of $(M,g)$ lies in understanding the geodesic motion planning problem on its total cut locus. The latter is defined as
$$  \Cut(M) = \{ (p,q)\in M\times M\,|\, q\in \Cut_p(M)\} \subseteq M\times M ,     $$
where $\Cut_p(M) \subset M$ denotes the cut locus of $p \in M$ with respect to the given metric $g$. 
In this article we introduce the notion of a \textit{fibered decomposition of the total cut locus}. If such a decomposition exists, it gives rise to a new upper bound for the geodesic complexity of $M$.
The main applications of this upper bound are estimates for the geodesic complexity of certain homogeneous Riemannian manifolds. Similar situations were already studied by the authors in \cite{mescher:2021}.
The upper bounds in the present article are however independent of the ones given in \cite{mescher:2021}. Various estimates from that article can be improved using the new results. \medskip

Let $(M,g)$ be a complete Riemannian manifold and consider the extended exponential map 
$$\Exp\colon TM\to M\times M, \qquad\Exp(v) = (\mathrm{pr}(v), \exp_{\mathrm{pr}(v)}(v)),$$ where $\mathrm{pr}:TM \to M$ denotes the bundle projection.
We say that the total cut locus \emph{admits a fibered decomposition} if there is a decomposition of $\Cut(M)$ into locally compact subsets $A_1,\ldots, A_k$, such that the restriction
$$  \pi_i:=  \Exp|_{\widetilde{A}_i} \colon \widetilde{A}_i \to A_i,     $$
where $\widetilde{A}_i = \Exp^{-1}(A_i)\cap \Cuttil(M)$, is a fibration for each $i \in \{1,\ldots,k\}$. 
Here, $\Cuttil(M)$ denotes the \textit{total tangent cut locus} which will be defined below. We will establish the following result.
 \begin{thm}[Theorem \ref{theorem_fibered_decomp}]
 Let $(M,g)$ be a complete Riemannian manifold.
 If the total cut locus $\Cut(M)$ admits a fibered decomposition $A_1,\ldots, A_k$ with fibrations $\pi_i\colon \widetilde{A}_i\to A_i$ for $i\in\{1,\ldots,k\}$, then the geodesic complexity of $M$ can be estimated by
 $$ \GC(M,g) \leq \sum_{i=1}^k \secat(\pi_i \colon \widetilde{A}_i \to A_i)  + 1.      $$
 \end{thm}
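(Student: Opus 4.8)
The plan is to build a geodesic motion planner on $M\times M$ out of the given fibered decomposition, using two ingredients: the classical fact that off the total cut locus there is a unique, continuously varying minimizing geodesic between the two endpoints, and a canonical continuous lift of tangent cut vectors to minimizing geodesics. The structural observation that makes everything work is a commuting triangle that converts local sections of each $\pi_i$ into local sections of $p$.

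First I would record the map
\[
\sigma\colon \Cuttil(M)\to GM,\qquad \sigma(v) = \bigl(t\mapsto \exp_{\mathrm{pr}(v)}(tv)\bigr).
\]
This is well defined because every tangent cut vector $v$ has the property that the geodesic $t\mapsto\exp_{\mathrm{pr}(v)}(tv)$ is length-minimizing on $[0,1]$, so it lies in $GM$; continuity into the compact-open topology follows from the smooth dependence of solutions of the geodesic equation on initial conditions together with the compactness of $[0,1]$. By construction $p\circ\sigma = \Exp|_{\Cuttil(M)}$, and hence $p\circ(\sigma|_{\widetilde A_i}) = \pi_i$ for every $i$. Consequently, whenever a subset $B\subseteq A_i$ carries a continuous section $s\colon B\to\widetilde A_i$ of $\pi_i$, the composite $\sigma\circ s\colon B\to GM$ is a continuous local section of $p$ over $B$.

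Next I would unravel the definition of $\secat(\pi_i)$ to cover each $A_i$ by $\secat(\pi_i)$ locally compact pieces $B_{i,1},\dots,B_{i,\secat(\pi_i)}$, each admitting a continuous section of $\pi_i$, so by the previous step each $B_{i,j}$ admits a continuous local section of $p$. Here I need to check that each $B_{i,j}$ is locally compact as a subspace of $M\times M$: it is locally closed in $A_i$, and $A_i$ is locally closed in $M\times M$ because it is locally compact and $M\times M$ is Hausdorff; since being locally closed is transitive, $B_{i,j}$ is locally closed, hence locally compact, in $M\times M$. On the remaining set $A_0 := (M\times M)\setminus\Cut(M)$, which is open and therefore locally compact, each pair $(q_0,q_1)$ is joined by a unique minimizing geodesic whose initial velocity depends continuously on $(q_0,q_1)$, yielding a continuous global section $s_0$ of $p$ over $A_0$.

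Finally I would assemble $M\times M = A_0\cup\bigcup_{i,j}B_{i,j}$, a decomposition into $1+\sum_{i=1}^k\secat(\pi_i)$ locally compact subsets each admitting a continuous local section of $p$, which gives exactly the claimed bound on $\GC(M,g)$. I expect the main technical nuisances to be the local-compactness bookkeeping for the sets $B_{i,j}$ and the verification that $\sigma$ is continuous into $GM$ and genuinely lands among length-minimizing geodesics; the conceptual step — passing from sections of $\pi_i$ to sections of $p$ via the triangle $\widetilde A_i\xrightarrow{\ \sigma\ }GM\xrightarrow{\ p\ }M\times M$ — is immediate once $\sigma$ is in place.
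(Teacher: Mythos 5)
Your proposal is correct and follows essentially the same route as the paper: compose local sections of $\pi_i$ with the canonical lift $v\mapsto(t\mapsto\exp_{\mathrm{pr}(v)}(tv))$ to obtain geodesic motion planners, decompose each $A_i$ into $\secat(\pi_i)$ locally compact pieces admitting sections (the paper outsources this step to a cited lemma rather than redoing the locally-closed bookkeeping), and add the open complement of $\Cut(M)$ with its unique continuous planner as the final piece. No substantive differences.
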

Here, $\secat$ denotes the sectional category of a fibration, which was introduced by A. Schwarz in \cite{schwarz:1966} as the genus of a fiber space.

Evidently, this raises the question whether there are any interesting cases of Riemannian manifolds whose total cut loci admit fibered decompositions.
For a homogeneous Riemannian manifold we will establish a tangible criterion on the cut locus of a single point implying that its total cut locus admits a fibered decomposition.
We will further show that each irreducible compact simply connected symmetric space satisfies this condition, providing a large class of examples whose total cut loci admit fibered decompositions.
By applying this result, we are able to compute the geodesic complexity of complex and quaternionic projective spaces with respect to their standard symmetric metrics.

\begin{thm}[Theorem \ref{theorem_projective}]
Let $M =  \CC P^n$ or $\mathbb{H}P^n$ equipped with its standard or Fubini-Study metric $g_{\mathrm{sym}}$, where $n \in \NN$.
Then its geodesic complexity satisfies
$$  \GC(M,g_{\mathrm{sym}}) = 2n + 1 .      $$
In particular, the geodesic complexity of $(M,g_{\mathrm{sym}})$ equals the topological complexity of $M$.
\end{thm}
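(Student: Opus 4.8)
The plan is to squeeze $\GC(M,g_{\mathrm{sym}})$ between $\TC(M)$ and $2n+1$, both of which equal $2n+1$.

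\emph{Lower bound.} Since a geodesic motion planner is in particular a motion planner, and for a closed manifold the variants of sectional category defined via open, closed or locally compact subsets agree, $\GC(M,g)\ge\TC(M)$. I would then invoke the classical value $\TC(\CP^n)=\TC(\mathbb{H}P^n)=2n+1$: writing $a$ for the generator of $H^2(\CP^n;\mathbb{Q})$, resp.\ of $H^4(\mathbb{H}P^n;\mathbb{Q})$, the zero-divisor $\bar a=a\otimes 1-1\otimes a$ satisfies $\bar a^{2n}=\binom{2n}{n}(-1)^n\,a^n\otimes a^n\neq 0$, so $\TC(M)\ge 2n+1$, while $\TC(M)\le\cat(M\times M)=2\cat(M)-1=2n+1$ because the rational cup-length is additive on this product. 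Hence $\GC(M,g_{\mathrm{sym}})\ge 2n+1$.

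\emph{Upper bound.} Here I would apply Theorem~\ref{theorem_fibered_decomp}. As $M$ is a compact rank-one symmetric space, every geodesic through a point $p$ minimizes up to the diameter, so $\Cut_p(M)\cong\CP^{n-1}$ (resp.\ $\mathbb{H}P^{n-1}$), the tangent cut locus $\Cuttil_p\subset T_pM$ is the round sphere of radius equal to the diameter ($S^{2n-1}$, resp.\ $S^{4n-1}$), and $\exp_p|_{\Cuttil_p}\colon\Cuttil_p\to\Cut_p(M)$ is the complex (resp.\ quaternionic) Hopf fibration --- in particular a fibration. Since $\Isom(M)$ acts transitively on $\Cut(M)$ and $\Exp$ is equivariant, $\Exp|_{\Cuttil(M)}\colon\Cuttil(M)\to\Cut(M)$ is a fiber bundle with fiber $S^1$ (resp.\ $S^3$) over all of $\Cut(M)$; thus the total cut locus admits the one-piece fibered decomposition $A_1=\Cut(M)$, $\pi_1=\Exp|_{\Cuttil(M)}$. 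Theorem~\ref{theorem_fibered_decomp}, together with the standard bound $\secat(\pi)\le\cat(B)$ for a fibration over a base $B$, then yields
$$\GC(M,g_{\mathrm{sym}})\;\le\;\secat\bigl(\Exp|_{\Cuttil(M)}\colon\Cuttil(M)\to\Cut(M)\bigr)+1\;\le\;\cat\bigl(\Cut(M)\bigr)+1,$$
and it remains to show $\cat(\Cut(M))=2n$. For the lower bound I would use that $\Cut(M)\to M$ is a fiber bundle with fiber $\CP^{n-1}$ (resp.\ $\mathbb{H}P^{n-1}$) whose Serre spectral sequence collapses for degree reasons: Leray--Hirsch then gives a class $h\in H^2(\Cut(M);\mathbb{Q})$ (resp.\ $H^4$) restricting to a fiber generator, with $H^*(\Cut(M);\mathbb{Q})$ free over $H^*(M;\mathbb{Q})$ on $1,h,\dots,h^{n-1}$; the fundamental class of the closed manifold $\Cut(M)$ is a non-zero multiple of $a^nh^{n-1}$ (writing $a$ also for its pull-back to $\Cut(M)$), a non-zero product of $n+(n-1)=2n-1$ positive-degree classes, so the rational cup-length of $\Cut(M)$ is at least $2n-1$. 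For the upper bound, pulling back CW structures of $M$ and of the fiber makes $\Cut(M)$ a CW complex whose cells occur only in the $2n-1$ positive dimensions $2,4,\dots,4n-2$ (resp.\ $4,8,\dots,8n-4$), so $\Cut(M)$ can be built from a point by $2n-1$ successive cone attachments. This gives $\GC(M,g_{\mathrm{sym}})\le 2n+1$, and combining with the lower bound, $\GC(M,g_{\mathrm{sym}})=2n+1=\TC(M)$.

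The cohomological bookkeeping is routine; the two substantive steps --- and the ones I expect to be the real work --- are geometric. The first is to justify, using the structure theory of the preceding sections, that the single-point picture genuinely globalizes, i.e.\ that $\Exp|_{\Cuttil(M)}$ is a fiber bundle over \emph{all} of $\Cut(M)$, which is what makes the one-piece fibered decomposition legitimate; this uses crucially that $\Cut(M)$ is itself homogeneous, whereas for a general homogeneous Riemannian manifold one is forced to decompose $\Cut(M)$ into several pieces, with a weaker resulting bound. The second is that the estimate is tight only by a single unit: the coarser alternatives --- subadditivity of $\secat$ applied to the Euler class of $\Exp|_{\Cuttil(M)}$, or covering $M$ by $\cat(M)=n+1$ categorical sets and the $\CP^{n-1}$-fibers by $\cat(\CP^{n-1})=n$ sets separately --- give only $\GC(M,g_{\mathrm{sym}})\le 2n+2$ (or worse), and the unit that must be saved is exactly the fact that the $(4n-2)$-dimensional manifold $\Cut(M)$, having the rational cohomology ring and cell distribution of a ``$\CP^{n-1}\times\CP^n$-type'' space, satisfies $\cat(\Cut(M))=2n$ rather than something larger.
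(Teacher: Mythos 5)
Your proposal is correct and follows essentially the same route as the paper: lower bound from $\TC(M)=2n+1$, upper bound from the one-piece fibered decomposition $\Exp|_{\Cuttil(M)}\colon\Cuttil(M)\to\Cut(M)$ (obtained from homogeneity, as in Corollary \ref{cor_fibered_symm}), followed by $\secat(\pi)\leq\cat(\Cut(M))\leq 2n$. The only cosmetic differences are that you recompute $\TC$ via zero-divisors and bound $\cat(\Cut(M))$ by a cone-length argument, where the paper simply cites the bound $\cat(B)\leq\dim(B)/2+1$ for the simply connected space $B=\Cut(M)$.
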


Moreover, using results by V. Ozols from \cite{ozols:1974}, we study the total cut locus of three-di\-men\-sio\-nal lens spaces with metrics of constant sectional curvature. We show that lens spaces of the form $L(p;1)$, where $p\geq 3$, are further examples of homogeneous manifolds whose total cut loci admit fibered decompositions. As these spaces are not globally symmetric, this shows that fibered decompositions are not exclusively obtained in the globally symmetric case. A detailed analysis of the fibrations involved in the fibered decompositions of $\Cut(L(p;1))$ shows that
$$   6\leq \GC(L(p;1),g) \leq 7 ,$$
see Theorem \ref{theorem_lens}, where $g$ is a metric of constant sectional curvature. \medskip 

This manuscript is organized as follows.
In Section \ref{sec_basics} we review the definitions of the total cut locus and of geodesic complexity and note some basic properties of these objects.

The central notion of a fibered decomposition of the total cut locus is introduced in Section \ref{sec_fibered_decomp}. 
In that section we also prove the above mentioned upper bound on geodesic complexity and study a criterion for the existence of a fibered decomposition.

Symmetric spaces are studied in Section \ref{sec_symm}. After recalling some properties of root systems and related notions we prove that the total cut loci of irreducible compact simply connected symmetric spaces admit fibered decompositions and derive an upper bound on geodesic complexity.
This will be applied to the examples of complex and quaternionic projective spaces and a particular complex Grassmannian. 
 
Finally, in Section \ref{sec_lens} we discuss the total cut loci of three-dimensional lens spaces and study a fibered decomposition to derive an upper bound on the geodesic complexity of these spaces.

\section{Geodesic complexity and the total cut locus} \label{sec_basics}

In this section we quickly introduce the basic notions of geodesic complexity and of the total cut locus.
For more properties of geodesic complexity and of the relation between cut loci and geodesic complexity we refer to \cite{reciomitter:2021} and \cite{mescher:2021}.

Under a \emph{locally compact decomposition} of a topological space $X$ we understand a cover
$ A_1,\ldots, A_k  $ of $X$ such that the $A_i$ are pairwise disjoint and each $A_i$, $i\in\{1,\ldots k\}$, is a locally compact subspace of $X$.
As usual we equip the path space $C^0(I,M)$ with the compact-open topology, where $I=[0,1]$ is the unit interval.
For a Riemannian manifold $(M,g)$ we let $GM \subseteq C^0(I,M)$ be the space of length-minimizing paths in $M$ equipped with the subspace topology of $C^0(I,M)$, i.e.
$$  GM = \{\gamma\in C^0(I,M)\,|\, \gamma \text{ is a length-minimizing geodesic in }M\} .  $$
\begin{definition} \label{def_gc}
Let $(M,g)$ be a complete Riemannian manifold and let 
$$ p : GM\to M\times M, \qquad p (\gamma)=(\gamma(0),\gamma(1)).$$ 
\begin{enumerate}
    \item A local section of $p $ is called a \textit{geodesic motion planner}.
\item Let $B\subseteq M\times M$ be a subset.
The \textit{subspace geodesic complexity of $B$ in $(M,g)$} is defined to be the smallest integer $k$ for which there is a locally compact decomposition $A_1,\ldots, A_k$ of $B$ with the following property: for each $i \in \{1,\dots,k\}$ there exists a continuous geodesic motion planner $A_i \to GM$. 
The subspace geodesic complexity of $B$ in $(M,g)$ is denoted by $\GC_{(M,g)}(B)$. If no such $k$ exists, we put $\GC_{(M,g)}(B):=+\infty$.
\item The \emph{geodesic complexity of $(M,g)$} is defined to be the subspace geodesic complexity of $M\times M$ itself and is denoted by $\GC(M,g)$, i.e. $\GC(M,g) = \GC_{(M,g)}(M\times M)$. 
\end{enumerate}
\end{definition}
\begin{remark}
\begin{enumerate}
    \item 
By the definition of topological complexity via locally compact decompositions, see \cite[Section 4.3]{farber:2008}, it is clear that the geodesic complexity of a Riemannian manifold $(M,g)$ is bounded from below by the topological complexity $\TC(M)$ of $M$.
\item If the metric under consideration is apparent, then we will drop the metric from the notation and simply write $\GC(M):=\GC(M,g)$ or $\GC_M(B) :=\GC_{(M,g)}(B)$.
\item Geodesic complexity was introduced by D. Recio-Mitter in \cite{reciomitter:2021} for more general geodesic spaces, i.e. metric spaces in which any two points are connected by a length-minimizing path. Since every complete Riemannian manifold is a geodesic space, our definition is nothing but a particular case of Recio-Mitter's definition.
Note however that our definition of geodesic complexity differs from the one in \cite{reciomitter:2021} by one. More precisely, while in \cite{reciomitter:2021} a geodesic space of geodesic complexity $k\in \NN$ is decomposed into at least $k+1$ locally compact subsets admitting geodesic motion planners, our definition requires the existence of a decomposition into $k$ subsets having this property. 
\end{enumerate}
\end{remark}
As pointed out by Recio-Mitter in \cite{reciomitter:2021} the geodesic complexity of a Riemannian manifold $(M,g)$ crucially depends on the cut loci of $M$.
We next recall the notion of the cut locus of a point as well as the total cut locus and the total tangent cut locus of a Riemannian manifold. The latter two notions were introduced by Recio-Mitter in \cite{reciomitter:2021}.
\begin{definition}
Let $(M,g)$ be a complete Riemannian manifold and let $p\in M$.
\begin{enumerate}
    \item Let $\gamma\colon [0,\infty)\to M$ be a unit-speed geodesic with $\gamma(0) = p$.
    We say that the \textit{cut time of} $\gamma$ is 
    $$   t_{\text{cut}}(\gamma) = \sup \{ t > 0\,|\, \gamma|_{[0,t]} \,\,\text{is minimal}\} .    $$
    In case that $t_{\text{cut}}(\gamma)<\infty$ we say that $\gamma(t_{\text{cut}}(\gamma))$ is a \textit{cut point of} $p$ \textit{along $\gamma$} and that $t_{\text{cut}}(\gamma)\Dot{\gamma}(0)\in T_pM$ is a \textit{tangent cut point} of $p$.
    \item The set of tangent cut points of $p$ is called the \textit{tangent cut locus of} $p$ and is denoted by $\Cuttil_p(M)\subset T_pM$.
    The set of cut points of $p$ is called the \textit{cut locus of} $p$ and is denoted by $\Cut_p(M)$.
    \item The \textit{total tangent cut locus of} $M$ is given by
    $$ \Cuttil(M) = \bigcup_{p\in M} \Cuttil_p(M) \subseteq TM  .  $$
    The \textit{total cut locus of} $M$ is defined as
    $$  \Cut(M) = \bigcup_{p\in M}(\{p\}\times \Cut_p(M)) \subseteq M\times M .    $$
\end{enumerate}
\end{definition}

\begin{remark}
Let $(M,g)$ be a complete Riemannian manifold.
\begin{enumerate}
\item  By definition of the Riemannian exponential map $\exp_p\colon T_p M\to M$ at $p\in M$ we have
$$  \exp_p( tv) = \gamma_v(t)   \quad \text{for all}\,\,\, t>0\text{ and }v\in T_pM,  $$
where $\gamma_v$ is the unique geodesic starting at $p$ with $\Dot{\gamma}(0) = v$.
Consequently, $\exp_p$ maps the tangent cut locus $\Cuttil_p(M)$ onto the cut locus $\Cut_p(M)$.
\item We recall the definition of the \textit{global Riemannian exponential map}, see e.g. \cite[p. 128]{lee:2018}, which is given by
$$  \Exp\colon TM\to M\times M,\quad \Exp(v) = (\mathrm{pr}(v),\exp_{\mathrm{pr}(v)}(v)).    $$
Here, $\mathrm{pr}\colon TM\to M$ denotes the bundle projection.
It is clear from the definitions that $\Exp$ maps the total tangent cut locus $\Cuttil(M)$ onto the total cut locus $\Cut(M)$.
\end{enumerate}
\end{remark}

Finally, we want to note how the total cut locus of a Riemannian manifold $(M,g)$ can be used to study the geodesic complexity of $M$. 
As Recio-Mitter argues in \cite[Theorem 3.3]{reciomitter:2021} there is a unique continuous geodesic motion planner on $(M\times M)\setminus \Cut(M)$.
By \cite[Lemma 4.2]{blaszczyk:2018} the latter is an open subset of $M\times M$, from which one derives the estimate
\begin{equation} \label{eq_cut_inequality}
    \GC_{(M,g)}(\Cut(M)) \leq \GC(M) \leq \GC_{(M,g)}(\Cut(M)) + 1  .
\end{equation}
Hence, in order to find bounds on the geodesic complexity of a complete Riemannian manifold $(M,g)$ one can study the subspace geodesic complexity of its total cut locus $\Cut(M)$.

\section{Fibered decompositions of cut loci} \label{sec_fibered_decomp}
 
In this section we introduce the notion of a fibered decomposition of the total cut locus of a Riemannian manifold $M$ and show that such a fibered decomposition of $\Cut(M)$ can be used to derive upper and lower bounds on the geodesic complexity of $M$.
After that we give a condition on the cut locus of a point $p\in M$ of a homogeneous Riemannian manifold which implies that the total cut locus admits a fibered decomposition.

\begin{definition} \label{def_fibered_decomp}
Let $(M,g)$ be a complete Riemannian manifold.
\begin{enumerate}
    \item 
A locally compact decomposition $A_1,\ldots, A_k$ of $\Cut(M)$ is called a \textit{fibered decomposition of} $\Cut(M)$ if the following holds:
for each $i \in \{ 1,\ldots, k\}$ the restricted exponential map
$$ \pi_i = \Exp|_{\widetilde{A}_i} \colon \widetilde{A}_i \to A_i       $$
is a fibration, where $\widetilde{A}_i = \Exp^{-1}(A_i) \cap \Cuttil(M)  $.
\item Similarly, if $p\in M$, then a locally compact decomposition $B_1,\ldots, B_k$ of $\Cut_p(M)$ is called a \textit{fibered decomposition of} $\Cut_p(M)$ if 
$$   \exp_p|_{\widetilde{B}_i} \colon \widetilde{B}_i\to B_i    $$
is a fibration, where $\widetilde{B}_i = \exp_p^{-1}(B_i)\cap \Cuttil_p(M)  $.
\end{enumerate}
\end{definition} 
 
 Here, under a fibration we always understand a Hurewicz fibration in the sense of homotopy theory. Next we will discuss how fibered decompositions of cut loci yield new lower and upper bounds for geodesic complexity.

 \begin{theorem} \label{theorem_fibered_decomp}
 Let $(M,g)$ be a complete Riemannian manifold.
 If the total cut locus $\Cut(M)$ admits a fibered decomposition $A_1,\ldots, A_k$ with fibrations $\pi_i\colon \widetilde{A}_i\to A_i$ for $i\in\{1,\ldots,k\}$ as in Definition \ref{def_fibered_decomp}.(1), then the geodesic complexity of $M$ can be estimated by
 $$ \GC(M) \leq \sum_{i=1}^k \secat(\pi_i)  + 1.      $$
 \end{theorem}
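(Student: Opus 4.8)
The plan is to leverage the inequality \eqref{eq_cut_inequality}, which reduces the problem to bounding $\GC_{(M,g)}(\Cut(M))$ from above by $\sum_{i=1}^k \secat(\pi_i)$. Since $A_1,\ldots,A_k$ is a locally compact decomposition of $\Cut(M)$, it suffices to show that for each fixed $i$ one has $\GC_{(M,g)}(A_i) \le \secat(\pi_i)$; summing over $i$ then gives the bound on $\GC_{(M,g)}(\Cut(M))$, and one more from \eqref{eq_cut_inequality} yields the claimed estimate. So I would fix $i$ and write $m = \secat(\pi_i)$.

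The heart of the argument is to convert a sectional-category decomposition of the fibration $\pi_i \colon \widetilde{A}_i \to A_i$ into a geodesic motion planner. By definition of $\secat$, there is a cover of $A_i$ by open sets $U_1,\ldots,U_m$ (open in $A_i$), each admitting a continuous local section $\sigma_j \colon U_j \to \widetilde{A}_i$ of $\pi_i$. From these open sets I would build a locally compact decomposition $A_i = C_1 \sqcup \cdots \sqcup C_m$ with $C_j \subseteq U_j$, using the standard trick (as in the treatment of $\secat$ and $\TC$ via locally compact/ENR decompositions, cf. \cite[Section 4.3]{farber:2008}): since $A_i$ is locally compact and the $U_j$ are open, one can refine the open cover to a locally compact decomposition subordinate to it, e.g. by taking $C_j = (U_j \cap \overline{U_j}) \setminus (C_1 \cup \cdots \cup C_{j-1})$ after suitable shrinking, so that each $C_j$ is locally closed in $A_i$ hence locally compact. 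Now for $(p,q) \in C_j$, the point $\sigma_j(p,q) \in \widetilde{A}_i \subseteq \Cuttil(M) \subseteq TM$ is a tangent cut vector with $\mathrm{pr}(\sigma_j(p,q)) = p$ and $\exp_p(\sigma_j(p,q)) = q$. Define $s_j \colon C_j \to GM$ by $s_j(p,q) = \big(t \mapsto \exp_p(t\,\sigma_j(p,q))\big)$; this is a length-minimizing geodesic from $p$ to $q$ because $\sigma_j(p,q)$ lies in the tangent cut locus of $p$, so the corresponding geodesic segment is minimal on $[0,1]$. Continuity of $s_j$ follows from continuity of $\sigma_j$ together with joint continuity of $(v,t) \mapsto \exp_{\mathrm{pr}(v)}(tv)$ and the fact that the compact-open topology on $C^0(I,M)$ is controlled by this data. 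Thus $C_1,\ldots,C_m$ is a locally compact decomposition of $A_i$ with continuous geodesic motion planners, giving $\GC_{(M,g)}(A_i) \le m = \secat(\pi_i)$.

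It remains to assemble the pieces: concatenating the decompositions $\{C^{(i)}_j\}_{j=1}^{\secat(\pi_i)}$ over all $i \in \{1,\ldots,k\}$ gives a locally compact decomposition of $\Cut(M) = \bigsqcup_i A_i$ into $\sum_{i=1}^k \secat(\pi_i)$ pieces, each carrying a continuous geodesic motion planner; here one uses that a finite disjoint union of locally compact subspaces, each locally compact in its own $A_i$ and with the $A_i$ themselves locally compact in $\Cut(M)$, is again a locally compact decomposition of $\Cut(M)$. Hence $\GC_{(M,g)}(\Cut(M)) \le \sum_{i=1}^k \secat(\pi_i)$, and \eqref{eq_cut_inequality} finishes the proof.

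I expect the main obstacle to be the passage from the \emph{open} cover witnessing $\secat(\pi_i)$ to a \emph{locally compact} decomposition of $A_i$ with the same cardinality, while keeping the sections continuous on each piece — i.e. making precise the ``standard trick'' above in the generality where $A_i$ is merely a locally compact (not necessarily nice, e.g. not ENR) subset of $M \times M$. One has to be careful that the shrunk sets $C_j$ are genuinely locally compact in $\Cut(M)$ and that restricting $\sigma_j$ to $C_j \subseteq U_j$ preserves continuity, which is automatic, but the local compactness of the difference sets requires that each $C_j$ be locally closed; this is where one invokes that an intersection of an open and a closed subset of a locally compact space is locally compact. A secondary, more routine point is verifying continuity of $s_j$ into the compact-open topology, which I would handle by noting that the evaluation-adjoint map $C_j \times I \to M$, $((p,q),t) \mapsto \exp_p(t\,\sigma_j(p,q))$, is continuous and applying the exponential law for the compact-open topology (valid since $I$ is locally compact Hausdorff).
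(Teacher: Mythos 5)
Your argument is correct and follows essentially the same route as the paper: sections of $\pi_i$ are turned into geodesic motion planners via $t \mapsto \exp_p(t\,\sigma_j(p,q))$ (continuity via the exponential law for the compact-open topology), the open $\secat$-cover is refined to a locally compact decomposition of each $A_i$ (the paper delegates this step to \cite[Lemma 4.1]{mescher:2021}, you carry out the standard locally-closed refinement directly), and the pieces are assembled and combined with \eqref{eq_cut_inequality}. No substantive difference.
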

 \begin{proof}
 We begin by showing that continuous local sections of $\pi_i$ induce continuous geodesic motion planners.
 Let $C\subseteq A_i$ be a locally compact subset of $A_i$ and assume that $s\colon C\to \widetilde{A}_i$ is a continuous section of the fibration $\pi_i$.
In particular, we have for $(p,q)\in C$ that
$$ \Exp(s(p,q)) =  (p, q) .       $$
We define $\sigma\colon C\to GM$ by
$$ \sigma ((p,q))(t) =   \mathrm{pr}_2 (\Exp( t s(p,q))) \quad \text{for}\,\,\, t\in [0,1],      $$
where $\mathrm{pr}_2\colon M\times M\to M$ denotes the projection onto the second component.
This is clearly a geodesic motion planner. 
In order to see that map $\sigma$ is also continuous note that the map
$$ \widetilde{\sigma} \colon  C\times I\to M, \quad ((p,q),t)\mapsto \mathrm{pr}_2(\Exp(t s(p,q)))    $$
is continuous since it is a composition of continuous maps.
By a general property of the compact-open topology, the continuity of $\widetilde{\sigma}$ implies the continuity of the induced map $\sigma \colon C\to GM$, see e.g. \cite[Theorem VII.2.4]{bredon:2013}.

For each $i\in\{ 1,\ldots,k\}$ we put $m_i := \secat(\pi_i)$. Then, see e.g. \cite[Lemma 4.1]{mescher:2021}, for each $i$ there is a locally compact decomposition $C_{i,1},\ldots,C_{i,m_i}$ of $A_i$ for which there is a continuous section of $\pi_i$ on each $C_{i,j}$, $j \in \{1,\dots,m_i\}$.
Since the sets $A_1,\ldots, A_k$ form a decomposition of $\Cut(M)$, we see that the sets
$$\{C_{i,j}\,|\,  i\in\{1,\ldots,k\}, j\in\{1,\ldots,m_i\} \}$$
are a decomposition of $\Cut(M)$ with each $C_{i,j}$ locally compact. 
By the first part of the proof we see that each $C_{i,j}$ admits a continuous geodesic motion planner.
This shows that
$$    \GC_{(M,g)}(\Cut(M)) \leq \sum_{i=1}^k m_i = \sum_{i=1}^k \secat(\pi_i) .  $$
 Combining this inequality with the inequality \eqref{eq_cut_inequality} completes the proof.
 \end{proof}
 

In the subsequent sections we will see examples of upper bounds on geodesic complexity by virtue of Theorem \ref{theorem_fibered_decomp}.
The next result however shows how a fibered decomposition of the total cut locus $\Cut(M)$ gives rise to a lower bound on $\GC_{(M,g)}(\Cut(M))$.
Before we state the result, we recall the definition of the \textit{velocity map}, see \cite[Definition 3.1]{mescher:2021}, i.e. the map  given by
$$   v:GM \to TM, \qquad  v(\gamma) = \Dot{\gamma}(0).$$
The velocity map is continuous by \cite[Proposition 3.2]{mescher:2021}.
Furthermore, we recall that the sectional category of a fibration $p\colon E\to B$ is defined by considering open covers $U_1,\ldots,U_k$ of $B$ such that each $U_i$, $i\in\{1,\ldots,k\}$ admits a continuous local section of $p$.
The geodesic complexity of a complete Riemannian manifold $M$ however is defined via locally compact decompositions of $M\times M$.
In order to compare these two concepts in the following theorem, we employ the notion of \textit{generalized sectional category} as introduced by J. M. Garc\'{\i}a Calcines in \cite[Definition 2.1]{garcia:2019}.
\begin{definition}
Let $p\colon E\to B$ be a fibration. The \textit{generalized sectional category} $\secat_g(p)$ is defined as the smallest integer $k$ for which there exists a cover $A_1,\ldots, A_k$ of $B$ such that each $A_i$, $i\in\{1,\ldots,k\}$, admits a continuous local section of $p$.
\end{definition}
Note that the sets $A_i$ in the above definition can be arbitrary subsets of $B$.
García-Calcines shows in \cite[Theorem 2.7]{garcia:2019} that if $p\colon E\to B$ is a fibration and if $E$ and $B$ are absolute neighborhood retracts, one has
$$   \secat_g(p) = \secat(p) .    $$

\begin{theorem} \label{theorem_lower_bound}
Let $(M,g)$ be a complete Riemannian manifold.
Assume that the total cut locus $\Cut(M)$ admits a fibered decomposition $A_1,\ldots, A_l$ with fibrations $\pi_i \colon \widetilde{A}_i\to A_i$ for $i\in\{1,\ldots,l\}$.
Furthermore, assume that all $\widetilde{A}_i$ and $A_i$ are absolute neighborhood retracts.
Then 
$$  \GC_{(M,g)}(\Cut(M)) \geq \max\{ \secat(\pi_i)\,|\, i\in \{1,\ldots, l\}\}  .     $$
\end{theorem}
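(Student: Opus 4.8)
The plan is to show that for each fixed index $i$, a continuous geodesic motion planner defined on a subset of $A_i$ produces a continuous local section of $\pi_i$ over that same subset; this is the converse construction to the one carried out in the proof of Theorem \ref{theorem_fibered_decomp}. Combined with $\secat_g = \secat$ for absolute neighborhood retracts, this will translate a locally compact decomposition of $\Cut(M)$ admitting geodesic motion planners into a cover of $A_i$ admitting local sections of $\pi_i$, and hence bound $\secat(\pi_i)$ from above by $\GC_{(M,g)}(\Cut(M))$ for every $i$.

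First I would take a locally compact decomposition $D_1, \ldots, D_m$ of $\Cut(M)$ with $m = \GC_{(M,g)}(\Cut(M))$, each $D_j$ admitting a continuous geodesic motion planner $\sigma_j \colon D_j \to GM$. Fix $i \in \{1,\ldots,l\}$. The sets $D_j \cap A_i$ cover $A_i$. On $D_j \cap A_i$ I define a candidate section of $\pi_i$ by $s_{j}(p,q) = v(\sigma_j(p,q)) = \dot{\sigma_j(p,q)}(0) \in T_pM$, using the velocity map $v \colon GM \to TM$, which is continuous by \cite[Proposition 3.2]{mescher:2021}. Since $\sigma_j(p,q)$ is a length-minimizing geodesic from $p$ to $q$ and $q \in \Cut_p(M)$, its initial velocity is a tangent cut vector, so $s_j(p,q) \in \Cuttil_p(M) \cap \Exp^{-1}(A_i) = \widetilde A_i$, and $\Exp(s_j(p,q)) = (\sigma_j(p,q)(0), \sigma_j(p,q)(1)) = (p,q)$, so $s_j$ is genuinely a section of $\pi_i$ over $D_j \cap A_i$. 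Continuity of $s_j$ follows from continuity of $v$ and of $\sigma_j$. This exhibits $A_i$ as covered by the $m$ subsets $D_j \cap A_i$, each admitting a local section of $\pi_i$, whence $\secat_g(\pi_i) \le m$. Invoking García-Calcines's theorem \cite[Theorem 2.7]{garcia:2019}, which applies because $\widetilde A_i$ and $A_i$ are absolute neighborhood retracts, gives $\secat(\pi_i) = \secat_g(\pi_i) \le m = \GC_{(M,g)}(\Cut(M))$. Taking the maximum over $i$ yields the claim.

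The main obstacle I anticipate is the verification that $s_j$ is well-defined as a map into $\widetilde A_i$ and that the velocity-map argument correctly identifies the initial vector of a minimizing geodesic to a cut point as lying in the tangent cut locus — one must be slightly careful that a given minimizing segment from $p$ to $q \in \Cut_p(M)$ indeed has its initial velocity (suitably rescaled to the cut time along that direction) in $\Cuttil_p(M)$, i.e. that $q$ being a cut point along \emph{that} geodesic is automatic. This is where the hypothesis $q \in \Cut_p(M)$ and the structure of the cut locus enter; the rest is a routine chase through continuity of the compact-open topology and the cited identity $\secat_g = \secat$. A secondary minor point is that the $D_j \cap A_i$ need not be locally compact in general, but this is harmless since generalized sectional category allows arbitrary subsets.
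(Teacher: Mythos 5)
Your proposal is correct and follows essentially the same route as the paper's proof: compose the geodesic motion planners with the velocity map to obtain local sections of $\pi_i$ over the traces $D_j\cap A_i$, bound $\secat_g(\pi_i)$ by the number of pieces, and invoke García-Calcines's identification $\secat_g=\secat$ for ANRs. The only differences are cosmetic (you argue directly for every $i$, the paper argues by contradiction for an index realizing the maximum), and the point you flag about the initial velocity landing in $\Cuttil_p(M)$ is exactly the step the paper also uses, which holds because every minimizing geodesic from $p$ to a point of $\Cut_p(M)$ realizes the cut time in its direction.
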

\begin{proof}
Let $m\in \NN$ be the maximum of $\{\secat(\pi_i)\,|\,i\in\{1,\ldots,l\}\}$ and choose $i_0\in\{1,\ldots,l\}$ such that $\secat(\pi_{i_0}) = m$.
Assume that the assertion of the theorem is false.
Then there are a locally compact decomposition $B_1,\ldots, B_k$ of $\Cut(M)$ with $k< m$ and continuous geodesic motion planners $s_j \colon B_j\to GM$ for $j \in \{1,\dots,k\}$.
For $i\in \{1,\ldots,k\}$ set $C_i = B_i\cap A_{i_0}$.
It is possible that there are $i\in \{1,\ldots,k\}$ with $C_i = \emptyset$. 
By reordering the $B_i$ we can arrange that $C_1,\ldots,C_r\neq \emptyset$ and $C_{r+1},\ldots,C_k = \emptyset$ for some $1\leq r\leq k$.
The sets $C_1,\ldots, C_r$ form a cover of $A_{i_0}$.
For $j\in\{1,\ldots,r\}$ we define a map 
$$\sigma_j \colon C_j \to \widetilde{A}_{i_0}, \qquad  \sigma_j = v\circ s_j|_{C_j} ,    $$ 
where $v$ denotes the velocity map. It is clear that $\sigma_j$ is continuous.
We claim that it is a section of $\widetilde{A}_{i_0}$.
For any $(p,q)\in C_j$ the path $s_j(p,q)$ is a minimal geodesic. Thus, there is $w\in \Cuttil_p(M)$ with 
$$   s_j(p,q)(t) =  \exp_p(tw) .      $$
By definition of the velocity map, we obtain
$$  \sigma_j(p,q) = (v\circ s_j)(p,q) = w     $$
and by definition of $\widetilde{A}_{i_0}$ it is clear that $w\in \widetilde{A}_{i_0}$.
Consequently,
$$   (\pi_{i_0}\circ \sigma_j)(p,q) = (\Exp|_{\widetilde{A}_{i_0}}\circ \sigma_j)(p,q) = (p,\exp_p(w)) = (p,q),      $$
which shows that $\sigma_j$ is a continuous section of $\pi_{i_0}$.
Hence, we obtain
$$  \secat_g(\pi_{i_0}) \leq r \leq k < m   . $$
Since $\pi_{i_0}\colon \widetilde{A}_{i_0}\to A_{i_0}$ is a fibration with $\widetilde{A}_{i_0}$ and $A_{i_0}$ being absolute neighborhood retracts, we derive from \cite[Theorem 2.7]{garcia:2019} that
$$  \secat(\pi_{i_0}) = \secat_g(\pi_{i_0})  < m ,  $$
which is a contradiction.
This completes the proof.
\end{proof}
\begin{cor} \label{cor_single_fibration}
Let $(M,g)$ be a complete Riemannian manifold. Assume that $$\pi = \Exp|_{\Cuttil(M)}\colon \Cuttil(M)\to \Cut(M)$$ is a fibration and assume that $\Cuttil(M)$ and $\Cut(M)$ are absolute neighborhood retracts. 
Then
$$   \GC_{(M,g)}(\Cut(M)) = \secat(  \pi)   \quad \text{and} \quad \secat(\pi)\leq \GC(M,g)\leq \secat(\pi)+ 1 . $$
\end{cor}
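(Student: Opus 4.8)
\textbf{Proof plan for Corollary \ref{cor_single_fibration}.}

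The plan is to obtain the statement by applying the two preceding theorems to the trivial fibered decomposition of $\Cut(M)$ consisting of a single piece, namely $A_1 = \Cut(M)$ itself. First I would observe that the hypothesis that $\pi = \Exp|_{\Cuttil(M)}\colon \Cuttil(M)\to \Cut(M)$ is a fibration is precisely the statement that the one-element decomposition $A_1 := \Cut(M)$ is a fibered decomposition of $\Cut(M)$ in the sense of Definition \ref{def_fibered_decomp}.(1): here $\widetilde{A}_1 = \Exp^{-1}(A_1)\cap\Cuttil(M) = \Cuttil(M)$ and $\pi_1 = \pi$. One should only note in passing that $\Cut(M)$ is locally compact, which holds since it is a subspace of $M\times M$; more carefully, local compactness as a \emph{subspace} is exactly what makes $A_1=\Cut(M)$ admissible in a locally compact decomposition, and this is implicitly available from the discussion in Section \ref{sec_basics} (a manifold is locally compact, hence so is any of its open or, via the relevant arguments already cited, suitable closed-type subsets; in any case the single-piece decomposition places no disjointness obstruction and one only needs $\Cut(M)$ itself to be locally compact, which the authors use throughout).

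Next I would apply Theorem \ref{theorem_fibered_decomp} to this decomposition with $k=1$, which immediately yields the upper bound
$$ \GC(M,g) \leq \secat(\pi_1) + 1 = \secat(\pi) + 1. $$
Then I would apply Theorem \ref{theorem_lower_bound} to the same decomposition with $l=1$; since the maximum over a one-element set is just that element, and since $\Cuttil(M)$ and $\Cut(M)$ are assumed to be absolute neighborhood retracts (so that $\widetilde{A}_1=\Cuttil(M)$ and $A_1=\Cut(M)$ satisfy the ANR hypothesis of that theorem), we obtain
$$ \GC_{(M,g)}(\Cut(M)) \geq \secat(\pi). $$
Combining this with the inequality $\GC_{(M,g)}(\Cut(M)) \leq \GC(M,g)$ from \eqref{eq_cut_inequality} gives $\GC(M,g) \geq \secat(\pi)$ as well, so together with the upper bound we get the sandwich $\secat(\pi) \leq \GC(M,g) \leq \secat(\pi)+1$, which is the second assertion.

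For the first assertion, $\GC_{(M,g)}(\Cut(M)) = \secat(\pi)$, I would argue that the lower bound just established from Theorem \ref{theorem_lower_bound} is matched by an upper bound: the proof of Theorem \ref{theorem_fibered_decomp} in fact shows, before invoking \eqref{eq_cut_inequality}, that $\GC_{(M,g)}(\Cut(M)) \leq \sum_i \secat(\pi_i)$, which here reads $\GC_{(M,g)}(\Cut(M)) \leq \secat(\pi)$. Hence the two inequalities combine to the claimed equality. The only real point requiring care — and the step I expect to be the mildest obstacle — is the bookkeeping around which notion of sectional category is in play: the equality $\GC_{(M,g)}(\Cut(M)) \leq \secat(\pi)$ coming out of the proof of Theorem \ref{theorem_fibered_decomp} uses a locally compact decomposition of $\Cut(M)$ refining the (open) sets witnessing $\secat(\pi)$, via \cite[Lemma 4.1]{mescher:2021}, while the matching lower bound from Theorem \ref{theorem_lower_bound} passes through the generalized sectional category $\secat_g(\pi)$ and the ANR identification $\secat_g(\pi) = \secat(\pi)$ of \cite[Theorem 2.7]{garcia:2019}; one simply needs to confirm that the ANR hypotheses on $\Cuttil(M)$ and $\Cut(M)$ are exactly those required for that identification, which they are by assumption. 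No new computation is needed beyond specializing the two theorems to $k=l=1$.
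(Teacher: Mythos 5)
Your proposal is correct and follows essentially the same route as the paper: specialize Theorem \ref{theorem_lower_bound} to the one-piece decomposition for the lower bound, extract the inequality $\GC_{(M,g)}(\Cut(M)) \leq \secat(\pi)$ from the proof of Theorem \ref{theorem_fibered_decomp} for the matching upper bound, and use \eqref{eq_cut_inequality} for the sandwich on $\GC(M,g)$. The paper's own proof is just a terser version of exactly this argument.
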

\begin{proof}
It is clear by Theorem \ref{theorem_lower_bound} that 
$$  \GC_{(M,g)}(\Cut(M))\geq \secat(\pi) .    $$
The reverse inequality follows from the proof of Theorem \ref{theorem_fibered_decomp}.
The second asserted inequality follows from equation \eqref{eq_cut_inequality}.
\end{proof}

In Section \ref{sec_symm} we will show that the symmetric metrics on complex and quaternionic projective spaces are examples for which the conditions of Corollary \ref{cor_single_fibration} are satisfied.


In the following we will derive a tangible criterion in order to find fibered decompositions of the total cut locus.
In the setting of homogeneous Riemannian manifolds we want to use a fibered decomposition of the cut locus of a point to obtain a fibered decomposition of the total cut locus, whose fibrations will in fact be fiber bundles.
We will see applications of this idea in Sections \ref{sec_symm} and \ref{sec_lens}.

Note that if a compact group of isometries acts transitively on a Riemannian manifold, then the manifold is necessarily complete. 
If $K$ is a group of isometries of a Riemannian manifold which fixes a point $p\in M$, then $k \cdot \Cut_p(M) = \Cut_p(M)$ for all $k\in K$.	

\begin{definition}
Let $(M,g)$ be a Riemannian manifold and assume that $G$ is a group of isometries acting transitively on $M$.
Let $p\in M$ be a point and let $K\subseteq G$ be its isotropy group.
Let $ B_1,\ldots,B_m$ be a locally compact decomposition of $\Cut_p(M)$.
We say that the decomposition is \textit{isotropy-invariant} if $k \cdot B_i = B_i$ for all $i = 1,\ldots, m$ and all $k\in K$.
\end{definition}

In the following let $(M,g)$ be a Riemannian manifold and let $G$ be a group of isometries of $M$ acting transitively on $M$.
We denote the group action by $\Phi\colon G\times M\to M$.
We shall use the shorthand notation $\Phi_g  = \Phi(g,\cdot)\colon M\to M$ as well as $\Phi(g,p) = g \cdot p$ for $g\in G,p\in M$.

Our aim is to use the homogeneity of $M$ to construct a fibered decomposition of the total cut locus $\Cut(M)$ out of a fibered decomposition of the cut locus of one single point in $M$. \medskip 

\emph{In the following, we fix a point $p\in M$ and let $B_1,\ldots, B_k$ be a decomposition of $\Cut_p(M)$ which is both isotropy-invariant and a fibered decomposition such that the associated fibrations $\widetilde{B}_i\to B_i$ are fiber bundles for $i \in \{1,\ldots,k\}$.}\medskip 

Let $K$ be the isotropy group of $p$ and let $\mathrm{pr}: G\to M \cong G/K$ denote the canonical projection.
For $i\in\{1,\ldots,k\}$ set
$$  A_i = \{ (q,r)\in \Cut(M)\,|\, r \in \Phi_g(B_i) \,\, \text{for some} \,\,g\in G\,\,\text{with}\,\,\mathrm{pr}(g) = q \}     $$
and
$$   \widetilde{A}_i = \{ (q,v)\in \Cuttil(M)\,|\, v\in (D \Phi_g)_p( \widetilde{B}_i)  \,\,\text{for some} \,\,g\in G\,\,\text{with}\,\,\mathrm{pr}(g) = q \} .    $$
We further consider the maps 
$$\pi_i: A_i \to M, \quad \pi_i(q,r)=q, \qquad \widetilde{\pi}_i: \widetilde{A}_i \to M, \quad \widetilde{\pi}_i(q,v)=q, \qquad i \in\{1,\ldots,k\}.$$

\begin{lemma} \label{lemma_fiber_bundle}
In the present setting the following holds for each $i \in \{1,\dots,k\}$:
\begin{enumerate}
    \item  $\pi_i: A_i\to M$ is a fiber bundle with typical fiber $B_i$.
\item $\widetilde{\pi}_i:\widetilde{A}_i \to M$ is a fiber bundle with typical fiber $\widetilde{B}_i$.
\end{enumerate}
\end{lemma}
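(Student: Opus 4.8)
The plan is to exhibit local trivializations for $\pi_i$ and $\widetilde{\pi}_i$ directly, using the local triviality of the principal $K$-bundle $\mathrm{pr}\colon G\to M$ together with the isotropy-invariance of the decomposition $B_1,\dots,B_k$. Since $\mathrm{pr}\colon G\to M$ is a principal $K$-bundle, each point $q_0\in M$ has an open neighborhood $U$ with a local section $s\colon U\to G$ of $\mathrm{pr}$, so that $\mathrm{pr}(s(q))=q$ for all $q\in U$. I will first check that for $(q,r)\in A_i$ and $(q,v)\in\widetilde A_i$ the element $g$ in the defining condition may always be taken to be $s(q)$: if $r\in\Phi_h(B_i)$ with $\mathrm{pr}(h)=q$, then $h=s(q)k$ for some $k\in K$, and since $k\cdot B_i=B_i$ by isotropy-invariance we get $r\in\Phi_{s(q)}(k\cdot B_i)=\Phi_{s(q)}(B_i)$; the tangent version works the same way, using $(D\Phi_h)_p=(D\Phi_{s(q)})_p\circ(D\Phi_k)_p$ and $(D\Phi_k)_p(\widetilde B_i)=\widetilde B_i$, which is the infinitesimal form of isotropy-invariance (and holds because $\Phi_k$ fixes $p$ and preserves cut loci, hence preserves $\widetilde B_i$ on the tangent level — this needs a short remark, see below). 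This identifies $\pi_i^{-1}(U)$ with $\{(q,r): q\in U,\ r\in\Phi_{s(q)}(B_i)\}$ and similarly for $\widetilde\pi_i^{-1}(U)$.

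Next I would write down the trivialization. Define
$$
\Psi\colon U\times B_i\to \pi_i^{-1}(U),\qquad \Psi(q,b)=(q,\Phi_{s(q)}(b)),
$$
with inverse $(q,r)\mapsto(q,\Phi_{s(q)^{-1}}(r))$; both maps are continuous because $s$ is continuous and $\Phi$ is continuous, and $\pi_i\circ\Psi=\mathrm{pr}_U$ by construction. One must verify that $\Phi_{s(q)}(b)$ indeed lands in $\Cut_q(M)$: this is because $\Phi_{s(q)}$ is an isometry sending $p$ to $q$, hence sending $\Cut_p(M)$ to $\Cut_q(M)$, and $b\in B_i\subseteq\Cut_p(M)$. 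An entirely parallel argument with
$$
\widetilde\Psi\colon U\times\widetilde B_i\to\widetilde\pi_i^{-1}(U),\qquad \widetilde\Psi(q,w)=\bigl(q,(D\Phi_{s(q)})_p(w)\bigr),
$$
and inverse $(q,v)\mapsto\bigl(q,(D\Phi_{s(q)^{-1}})_{q}(v)\bigr)$ gives the local triviality of $\widetilde\pi_i$; here continuity of $(q,w)\mapsto(D\Phi_{s(q)})_p(w)$ follows from smoothness of the $G$-action on $TM$, and the fact that $(D\Phi_{s(q)})_p$ maps $\Cuttil_p(M)$ onto $\Cuttil_q(M)$ is the differential version of the previous isometry remark (an isometry takes minimal geodesics through $p$ to minimal geodesics through $q$, hence carries tangent cut points to tangent cut points). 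So both $\pi_i$ and $\widetilde\pi_i$ are fiber bundles with the asserted typical fibers.

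The main obstacle is bookkeeping rather than any deep point: one has to be careful that the "$g$ with $\mathrm{pr}(g)=q$" quantifier in the definitions of $A_i,\widetilde A_i$ is genuinely independent of the choice of $g$, which is exactly where isotropy-invariance of the $B_i$ (and of the $\widetilde B_i$) is used, and one has to confirm that the local section $s$ can be chosen on the same $U$ for the trivialization to be well defined. I should also note explicitly why isotropy-invariance of $B_i$ implies $(D\Phi_k)_p(\widetilde B_i)=\widetilde B_i$ for $k\in K$: since $\exp_p\circ(D\Phi_k)_p=\Phi_k\circ\exp_p$ on $T_pM$ and $\Phi_k$ is an isometry fixing $p$, the map $(D\Phi_k)_p$ permutes the tangent cut points of $p$ and is compatible with the identification $\exp_p\colon\widetilde B_i\to B_i$; combined with $k\cdot B_i=B_i$ this forces $(D\Phi_k)_p(\widetilde B_i)=\widetilde B_i$. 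Once these compatibilities are in place, the local trivializations above finish the proof, and transition functions (valued in the structure group coming from $K$ acting on $B_i$, resp. on $\widetilde B_i$) are continuous automatically, so no further argument is required.
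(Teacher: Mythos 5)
Your proposal is correct and follows essentially the same route as the paper: use a local section $s\colon U\to G$ of the principal bundle $\mathrm{pr}\colon G\to M$ to build the trivialization $(q,r)\mapsto(q,\Phi_{s(q)^{-1}}(r))$ (resp.\ $(q,v)\mapsto(q,(D\Phi_{s(q)^{-1}})_q(v))$), with well-definedness coming from the isotropy-invariance of the $B_i$. Your extra remark verifying $(D\Phi_k)_p(\widetilde{B}_i)=\widetilde{B}_i$ via naturality of the exponential map is a detail the paper leaves implicit, and it is correctly handled.
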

Note that by fiber bundle, we mean a fiber bundle in the continuous category.
We do not assume that the sets $B_i$ carry any differentiable structure.%
\begin{proof}
We want to show that both $A_i$ and $\widetilde{A}_i$ are locally trivial.
Fix an $i\in\{1,\ldots,k\}$ and let
$$ \pi_i\colon A_i\to M, \quad \pi_i(q,r) = q , $$
be the projection on the first factor.
Let $U\subseteq M$ be an open set on which there exists a continuous section $s\colon U\to G$ of $\mathrm{pr}$.
Define $\varphi_i\colon A_i|_U\to U\times B_i$ by\
$$  \varphi_i(q,r) = (q, s(q)^{-1} \cdot r) \quad \text{for}\quad (q,r)\in A_i|_U .     $$
This is a well-defined map since if $(q,r)\in A_i$, then there is a $b\in B_i$ such that $r = g\cdot b$ for some $g\in G$ with $g\cdot p = q$.
Therefore, by the isotropy invariance of the decomposition $B_1,\ldots, B_k$,
$$   s(q)^{-1} \cdot r = (s(q)^{-1} g)\cdot  b \in B_i   $$
since $s(q)^{-1} g\in K$. Evidently, $\varphi_i$ is a homeomorphism.
For each point $(q,r)\in A_i$ there is such an open neighborhood $U$ of $q$ admitting a continuous section $s\colon U\to G$ of $\mathrm{pr}$. Thus, the above construction shows that $A_i\to M$ is a continuous fiber bundle.
The proof for $\widetilde{A}_i$ is analogous.
One defines local trivializations of the form $\psi_i\colon A_i|_U\to U\times \widetilde{B}_i$, where $U$ is an open subset of $M$ admitting a continuous section $s\colon U\to G$ of $\mathrm{pr}$, by 
$$  \psi_i(q,v) = ( q, (D\Phi_{s(q)^{-1}})_q v) \quad \text{for}\,\,\, (q,v)\in\widetilde{A}_i  .   $$

As for $\varphi_i$ one shows that $\psi_i$ is well-defined and a homeomorphism.
\end{proof}

\begin{theorem} \label{theorem_isotropy_fibered}
Let $(M,g)$ be a Riemannian manifold and $G$ be a group of isometries of $M$ acting transitively on $M$.
Fix a point $p\in M$.
Let $B_1,\ldots, B_k$ be a decomposition of $\Cut_p(M)$ which is both isotropy-invariant and a fibered decomposition such that the associated fibrations $\widetilde{B}_i\to B_i$ are fiber bundles. 
For $i=1,\ldots,k$ let $C_i$ be the typical fiber of the bundle $\widetilde{B}_i\to B_i$.
Define the sets $A_i\subseteq \Cut(M)$ as above.
Then the decomposition of $\Cut(M)$ into $A_1,\ldots, A_k$ is a fibered decomposition.
More precisely, the restriction $\Exp|_{\widetilde{A}_i}\colon \widetilde{A}_i\to A_i$ is a fiber bundle with typical fiber $C_i$.
\end{theorem}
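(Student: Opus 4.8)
The plan is to deduce the assertion from Lemma~\ref{lemma_fiber_bundle} together with the hypothesis that $\exp_p|_{\widetilde{B}_i}\colon\widetilde{B}_i\to B_i$ is a fiber bundle with typical fiber $C_i$. The single geometric input is that an isometry $\phi$ of $M$ intertwines exponential maps, $\phi\circ\exp_x=\exp_{\phi(x)}\circ(D\phi)_x$ for all $x\in M$. I would first record two preliminary facts. Each $\Phi_k$ with $k\in K$ is an isometry fixing $p$, hence preserves $\Cuttil_p(M)$; combining this with the isotropy-invariance of $B_1,\dots,B_k$ and the intertwining identity shows that $(D\Phi_k)_p(\widetilde{B}_i)=\widetilde{B}_i$ for every $k\in K$, so the decomposition $\widetilde{B}_1,\dots,\widetilde{B}_k$ of $\Cuttil_p(M)$ is isotropy-invariant as well. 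Moreover, for $(q,v)\in\widetilde{A}_i$ with $v=(D\Phi_g)_p(w)$, $w\in\widetilde{B}_i$, $g\cdot p=q$, the intertwining identity gives $\exp_q(v)=\Phi_g(\exp_p(w))\in\Phi_g(B_i)$, so $\Exp(q,v)\in A_i$; conversely, if $(q,v)\in\Cuttil(M)$ with $\Exp(q,v)\in A_i$, a suitable choice of $g$ and $w=(D\Phi_{g^{-1}})_q(v)$ exhibits $(q,v)\in\widetilde{A}_i$. Hence $\widetilde{A}_i=\Exp^{-1}(A_i)\cap\Cuttil(M)$, as required by Definition~\ref{def_fibered_decomp}.

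Next I would verify that $A_1,\dots,A_k$ is a locally compact decomposition of $\Cut(M)$. They cover $\Cut(M)$ because for $r\in\Cut_q(M)$ and $g$ with $g\cdot p=q$ one has $\Cut_q(M)=\Phi_g(\Cut_p(M))$, so $\Phi_{g^{-1}}(r)$ lies in some $B_i$ and $(q,r)\in A_i$. They are pairwise disjoint: if $(q,r)\in A_i\cap A_j$ with witnesses $g,g'$, then $g^{-1}g'\in K$, and the isotropy-invariance of the $B_i$ forces $\Phi_{g^{-1}}(r)\in B_i\cap B_j$, whence $i=j$. Local compactness of $A_i$ is immediate from Lemma~\ref{lemma_fiber_bundle}(1): $A_i$ is a fiber bundle over a manifold with locally compact typical fiber $B_i$, hence locally homeomorphic to the product of an open subset of $M$ with $B_i$.

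The core of the argument is to show that $\Exp|_{\widetilde{A}_i}\colon\widetilde{A}_i\to A_i$ is a fiber bundle with typical fiber $C_i$. Fix $i$ and an open set $U\subseteq M$ carrying a continuous section $s\colon U\to G$ of $\mathrm{pr}$, and take the trivializations $\varphi_i\colon A_i|_U\xrightarrow{\cong}U\times B_i$ and $\psi_i\colon\widetilde{A}_i|_U\xrightarrow{\cong}U\times\widetilde{B}_i$ from the proof of Lemma~\ref{lemma_fiber_bundle}. A short computation with the intertwining identity and the relation $\Phi_{s(q)}(p)=q$ shows that $\varphi_i\circ\Exp\circ\psi_i^{-1}=\id_U\times(\exp_p|_{\widetilde{B}_i})$. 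Now cover $B_i$ by open sets $V$ over which $\exp_p|_{\widetilde{B}_i}$ trivializes, $\widetilde{B}_i|_V\cong V\times C_i$; the sets $\varphi_i^{-1}(U\times V)$ then form an open cover of $A_i$, and over each of them $\Exp|_{\widetilde{A}_i}$ is, after these identifications, the projection $U\times V\times C_i\to U\times V$. Thus $\Exp|_{\widetilde{A}_i}$ is locally trivial with fiber $C_i$. Since $A_i$ is Hausdorff, locally compact and second countable, hence paracompact, this fiber bundle is in particular a Hurewicz fibration; together with the preceding paragraph this shows that $A_1,\dots,A_k$ is a fibered decomposition of $\Cut(M)$ with the stated fibers.

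The only genuinely delicate step is the local analysis in the last paragraph: one must keep track of two a priori unrelated families of trivializations --- those of $A_i\to M$, coming from local sections of $\mathrm{pr}\colon G\to M$, and those of $\exp_p|_{\widetilde{B}_i}\to B_i$ --- and check that they combine into a local product decomposition of $\Exp|_{\widetilde{A}_i}$ off the fiber $C_i$. All remaining points reduce, via the intertwining identity for isometries and exponential maps, to routine verifications.
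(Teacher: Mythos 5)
Your proof is correct and follows essentially the same route as the paper's: both rest on the local trivializations $\varphi_i,\psi_i$ from Lemma~\ref{lemma_fiber_bundle}, identify $\Exp|_{\widetilde{A}_i}$ over $U$ with $\id_U\times\exp_p|_{\widetilde{B}_i}$ via naturality of the exponential map, and then refine by trivializing open sets $V\subseteq B_i$ to exhibit local triviality with fiber $C_i$. Your additional verifications --- that the $A_i$ form a locally compact decomposition, that $\widetilde{A}_i=\Exp^{-1}(A_i)\cap\Cuttil(M)$ as required by Definition~\ref{def_fibered_decomp}, and the paracompactness of $A_i$ guaranteeing that the bundle is a Hurewicz fibration --- are points the paper leaves implicit, and they are correct.
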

\begin{proof}
Fix $i\in\{1,\ldots,k\}$ and let $p \in M$. As discussed in the proof of Lemma \ref{lemma_fiber_bundle}, we can find an open neighborhood $U\subseteq M$ of $p$ and local trivializations $\varphi_i\colon A_i |_U\to U\times B_i$ and $\psi_i\colon \widetilde{A}_i|_U\to U\times \widetilde{B}_i$. If $\varphi_i$ and $\psi_i$ are given as in that proof, then the inverse of $\varphi_i$ is explicitly  given by $$ \varphi_i^{-1}\colon U\times B_i\to A_i|_U, \quad \varphi_i^{-1}(q,b) = (q,s(q)\cdot b),$$
where $s\colon U\to G$ is a local section of $\mathrm{pr}\colon G\to M$. We claim that the diagram
$$
\begin{tikzcd}
\widetilde{A}_i|_U \arrow[]{r}{\Exp|_{\widetilde{A}_i|_U}} \arrow[]{d}{\psi_i} 
& [3em]
A_i|_U \arrow[swap]{d}{\varphi_i} 
\\
U\times \widetilde{B}_i \arrow[swap]{r}{(\id_U,\exp_p)} 
&
U\times B_i 
\end{tikzcd}
$$
commutes. To see this, let $(q,v)\in\widetilde{A}_i|_U$.
Then
$$ \psi_i(q,v) = (q, (D \Phi_{s(q)^{-1}})_q v) = (q, (D\Phi_{s(q)})_p^{-1} v) \in U\times \widetilde{B}_i .     $$
By naturality of the exponential map, see \cite[Proposition 5.20]{lee:2018}, it thus holds that
\begin{eqnarray*}
 (\varphi_i^{-1}\circ(\id_U,\exp_p) \circ \psi_i)(q,v) &=& \varphi_i^{-1}(q, \exp_p(   (D\Phi_{s(q)})_p^{-1} v)) \\
  &=& (q, s(q)  s(q)^{-1} \cdot \exp_q(v))  \\ &=& (q,\exp_q(v))  \\ &=& \Exp(q,v).
\end{eqnarray*}
By assumption the restriction $\exp_p|_{\widetilde{B}_i}\colon \widetilde{B}_i\to B_i$ is a fiber bundle. 
Hence, by choosing an open subset $V\subseteq B_i$ such that $\widetilde{B}_i|_{V}$ is trivial and considering $\varphi^{-1}_i(U\times V)$ we obtain an open set in $A_i$ over which the map $\Exp|_{\widetilde{A}_i}\colon \widetilde{A}_i\to A_i$ is trivial.
Since $A_i$ is covered by such trivializations, this proves the claim.
\end{proof}

\section{The total cut loci of symmetric spaces} \label{sec_symm}

In this section we turn to the study of cut loci in irreducible compact simply connected symmetric spaces and show that the total cut locus of these spaces always admits a fibered decomposition.
Furthermore, we derive a new upper bound for the geodesic complexity of symmetric spaces.
Note that this section is related to \cite[Section 8]{mescher:2021} where the authors proved an upper bound for irreducible compact simply connected symmetric spaces in terms of the sectional category of the isometry bundle $\Isom(M)\to M$ over a symmetric space $M$ and certain subspace geodesic complexities.
The upper bound in the current section is derived independently of this previous result.

We briefly recall the most important notions related to root systems of symmetric spaces.
Let $M=G/K$ be a symmetric space with $(G,K)$ being a Riemannian symmetric pair.
Denote the canonical projection by $\pi\colon G\to G/K\cong M$.
There is a decomposition $\gg = \kk \oplus\mm$ of the Lie algebra $\gg$ of $G$ such that $\mm \cong T_{\pi(e)}M$ is a linear isometry.
We set $o=\pi(e)$, where $e$ is the unit element of $G$.
Consider the complexification $\gg_{\CC}$ of $\gg$ and choose a Cartan subalgebra $\hh\subseteq \gg_{\CC}$.
A \textit{root} of $\gg_{\CC}$ is an element $\alpha\in\hh^*$ of the dual space of $\hh$ for which there exists an $X\in\gg_{\CC}\setminus \{0\}$ with
$$  [H,X] = \alpha(H) X \quad \text{for all}\,\,\,H\in \hh.    $$
If $\aa$ is a maximal abelian subalgebra of $\mm$, then consider the restriction $\alpha|_{\aa}$ of a root of $\gg_{\CC}$.
If this restriction is non-zero, we call it a \emph{root} of the symmetric pair $(G,K)$.
We choose and fix a set of \emph{simple roots} of the symmetric pair $(G,K)$ and denote it by $\pi(G,K)$. We further let $\delta$ denote its \emph{highest root}. See \cite[Section X.3]{helgason:78} or \cite[Section V.4]{broecker:85} for details on these notions.
Due to the compactness of $G$ we can choose an $\Ad_G$-invariant inner product $\left<\cdot,\cdot\right>$ on $\gg$ and identify the roots with vectors in $\aa$ via this inner product.
Then a \emph{Weyl chamber} of $\pi(G,K)$ can be defined as
$$W:= \{X \in \aa \ | \left<\gamma,X \right> >0 \ \ \forall \gamma \in \pi(G,K)\}. $$
Note that one can define the other Weyl chambers by choosing other systems of simple roots.
The \emph{Weyl group} $W(G,K)$ of the symmetric pair $(G,K)$ is generated by the reflections $s_{\alpha}$ on the hyperplanes 
$$  \{ H\in \aa \,|\, \alpha(H) = 0\} .   $$
It is a finite group and acts simply transitively on the set of Weyl chambers of $(G,K)$.\medskip

T. Sakai has studied the cut loci of compact simply connected symmetric spaces in \cite{Sakai1}, see also \cite{Sakai3} and \cite{Sakai2}.
We summarize the main results.
If there are two or more simple roots of $(G,K)$, put
$$\D := \{\Delta \subset \pi(G,K) \ | \ \Delta \neq \emptyset, \ \delta \notin \Delta\}  . $$
In case there is only one simple root $\gamma$, this is then also the highest root and we set
$$  \D : = \{ \{\gamma \} \}  .  $$
If there are two or more simple roots, we set 
$$S_\Delta := \left\{X \in \overline{W} \ | \ \left<\gamma,X\right> >0 \ \  \forall \gamma \in \Delta, \ \left<\gamma,X\right>=0 \ \ \forall \gamma \in \pi(G,K)\setminus \Delta, \ 2 \left<\delta,X\right>=1  \right\}$$
for each $\Delta \in \D$.
In case there is a single simple root $\gamma$, we define
$$  S_{\{\gamma\}} := \{ X\in \aa\, | \, 2\langle \gamma, X \rangle = 1\}   .  $$
As usual, we denote by $\exp:\gg\to G$ the exponential map of $G$ and define 
 $$ \overline{\exp}:\mm \to M, \qquad \overline{\exp} := \pi \circ \exp|_{\mm}.$$
This in fact agrees with the Riemannian exponential at the point $o$ under the canonical identification $\mm \cong T_o M$ and is often denoted by $\Exp$.
In order not to confuse it with the global Riemannian exponential map used in Section \ref{sec_fibered_decomp}, we denote it by $\overline{\exp}$.
For $\Delta \in \D$ set
$$\widetilde{\Phi}_\Delta: K \times S_\Delta \to M, \qquad \widetilde{\Phi}_\Delta(k,X) = \overline{\exp}(\Ad_k(X)) $$
and 
$$ \widetilde{\Psi}_{\Delta} \colon K\times S_{\Delta}\to \mm, \qquad \widetilde{\Psi}_{\Delta}(k,X) = \Ad_k(X) .    $$
Furthermore, we define  $$Z_\Delta := \{ k \in K \ | \ \overline{\exp}(\Ad_k(X))=\overline{\exp}(X) \ \forall X \in S_\Delta\}$$
and 
$$ K_{\Delta} = \{ k\in K\,|\, \Ad_k(X) = X  \  \forall X\in S_{\Delta}    \} .$$
Evidently, $Z_\Delta$ and $K_{\Delta}$ are closed subgroups of $K$ with $K_{\Delta}\subseteq Z_{\Delta}$. 
Sakai shows in \cite[Proposition 4.10]{Sakai1}, that if $\Delta\in \D$, then the map $\widetilde{\Phi}_\Delta$ induces a differentiable embedding
$$\Phi_{\Delta}: K/Z_\Delta \times S_\Delta \to M.$$
Define $C_\Delta := \im \Phi_\Delta$ for each $\Delta \in \D$. 
The cut locus of the point $o=\pi(e)\in M$ is then given by 
$$\Cut_o(M) = \bigcup_{\Delta \in \D} C_\Delta $$
see \cite[Theorem 5.3]{Sakai1}. Moreover, the set $\{C_{\Delta}\}_{\Delta\in\D}$ forms a locally compact decomposition of $\Cut_o(M)$.
\begin{lemma} \label{lemma_psi_delta}
The map $\widetilde{\Psi}_{\Delta}$ induces a continuous embedding $\Psi_{\Delta}\colon K/K_{\Delta}\times S_{\Delta}\to \mm$ and for $\widetilde{C}_{\Delta} := \mathrm{im}(\Psi_{\Delta})$ we have that
$$  \widetilde{C}_{\Delta} = \overline{\exp}^{-1}(C_{\Delta}) \cap \Cuttil_o(M)    .     $$
\end{lemma}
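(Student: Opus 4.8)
The plan is to mirror, at the level of tangent vectors, the structure that Sakai established downstairs via $\widetilde\Phi_\Delta$ and $\Phi_\Delta$. First I would record that $\widetilde\Psi_\Delta(k,X) = \Ad_k(X)$ is continuous and $K$-equivariant, and that it descends to the quotient $K/K_\Delta$ precisely because $K_\Delta$ is by definition the stabilizer of every point of $S_\Delta$ under $\Ad$; more care is needed to see that it descends to a \emph{well-defined} map $\Psi_\Delta\colon K/K_\Delta\times S_\Delta\to\mm$, which amounts to checking that $\Ad_{k_1}(X_1)=\Ad_{k_2}(X_2)$ with $X_1,X_2\in S_\Delta$ forces $X_1=X_2$ and $k_1K_\Delta = k_2K_\Delta$. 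The equality $X_1=X_2$ follows because $X_1,X_2$ lie in the (closed) Weyl chamber $\overline W$ and $\Ad$-conjugate elements of $\overline W$ inside $\mm$ must coincide (two points of a closed fundamental domain for the Weyl group action that are $\Ad_K$-conjugate are equal — this is the standard fact underlying the slice description of $\mm/\Ad_K$). Then $\Ad_{k_2^{-1}k_1}(X_1)=X_1$, but since this must hold for the specific $X_1$ only, one upgrades to all of $S_\Delta$ using that $S_\Delta$ is a single open face of the chamber wall and the stabilizer of one interior point of a face equals the stabilizer of the whole face; hence $k_2^{-1}k_1\in K_\Delta$. Injectivity of $\Psi_\Delta$ is the same computation read in reverse.

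Next I would argue that $\Psi_\Delta$ is a \emph{topological} embedding. Since $K$ is compact, $K/K_\Delta$ is compact, so $K/K_\Delta\times S_\Delta$ is locally compact Hausdorff and $\Psi_\Delta$ is a continuous injection that is proper onto its image (closed map on each compact slice $K/K_\Delta\times\{X\}$ as $X$ ranges over a compact exhaustion of $S_\Delta$); this gives that $\Psi_\Delta$ is a homeomorphism onto $\widetilde C_\Delta$. Alternatively, and perhaps more cleanly, I would deduce the embedding property from Sakai's embedding $\Phi_\Delta\colon K/Z_\Delta\times S_\Delta\to M$ together with the commuting relation $\overline{\exp}\circ\widetilde\Psi_\Delta = \widetilde\Phi_\Delta$: the natural projection $K/K_\Delta\to K/Z_\Delta$ and the map $\overline{\exp}$ fit into a commutative square with $\Psi_\Delta$ and $\Phi_\Delta$, and one checks that $\overline{\exp}$ restricted to $\widetilde C_\Delta$ realizes $\widetilde C_\Delta$ as a bundle over $C_\Delta$ with fiber $Z_\Delta/K_\Delta$; combined with $\Phi_\Delta$ being an embedding this pins down the topology on $\widetilde C_\Delta$.

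The heart of the statement is the set-theoretic identity $\widetilde C_\Delta = \overline{\exp}^{-1}(C_\Delta)\cap\Cuttil_o(M)$. For the inclusion $\subseteq$: a point $\Ad_k(X)$ with $X\in S_\Delta$ satisfies $\overline{\exp}(\Ad_k(X)) = \widetilde\Phi_\Delta(k,X)\in C_\Delta$, so it lies in $\overline{\exp}^{-1}(C_\Delta)$; and it lies in $\Cuttil_o(M)$ because $X\in S_\Delta$ is a tangent cut vector — the condition $2\langle\delta,X\rangle = 1$ is exactly the normalization placing $X$ at the cut distance along its ray in the direction determined by the chamber, and $\Ad_k$ preserves tangent cut vectors since $\exp(k)$ (for $k$ in the isotropy group, acting via $\Ad$) is an isometry fixing $o$. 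For $\supseteq$: suppose $v\in\Cuttil_o(M)$ with $\overline{\exp}(v)\in C_\Delta$. Writing $v = \Ad_k(X)$ with $X\in\overline W$ (every vector in $\mm$ is $\Ad_K$-conjugate into the closed Weyl chamber), minimality of the geodesic up to $v$ and $\overline{\exp}(v)\in C_\Delta = \im\Phi_\Delta$ forces $X$ to satisfy the face conditions $\langle\gamma,X\rangle>0$ for $\gamma\in\Delta$, $\langle\gamma,X\rangle = 0$ for $\gamma\notin\Delta$, and the cut-distance normalization $2\langle\delta,X\rangle = 1$, i.e.\ $X\in S_\Delta$; here one invokes Sakai's description of $\Cuttil_o(M)$ and $\Cut_o(M)$ and the injectivity of $\Phi_\Delta$ to identify which face $v$ maps into. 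I expect this last inclusion — extracting from ``$v$ is a tangent cut vector mapping into $C_\Delta$'' the precise face membership $X\in S_\Delta$ — to be the main obstacle, and it is handled by carefully unwinding Sakai's results \cite[Proposition 4.10, Theorem 5.3]{Sakai1} rather than by any new computation; once the chamber decomposition of $\Cuttil_o(M)$ is in hand, everything else is formal.
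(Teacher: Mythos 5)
Your proposal is correct and follows essentially the same route as the paper: descend $\widetilde{\Psi}_{\Delta}$ to $K/K_{\Delta}\times S_{\Delta}$, deduce injectivity from the fact that $\Ad_K$-conjugate points of the closed Weyl chamber $\overline{W}$ coincide (the paper cites \cite[Proposition VI.2.2]{helgason:78}), use compactness of $K$ to upgrade the continuous injection to an embedding, and obtain the reverse inclusion of the set identity by unwinding Sakai's description of the fibers of $\overline{\exp}$ over $C_{\Delta}$ (the paper invokes \cite[Lemma 4.7]{Sakai1}, which identifies the preimage in $\Cuttil_o(M)$ of a point of $C_\Delta$ with a $Z_\Delta$-orbit). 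The one place where your write-up is thinner than the paper's is the embedding step: closedness on compact slices does not by itself yield properness onto the image, and the paper instead runs an explicit sequential argument --- given $\Ad_{k_n}(X_n)\to\Ad_k(X)$, extract convergent subsequences $k_{n_i}\to k_0$ and $X_{n_i}\to Y\in\overline{W}$, then apply the injectivity argument to the limit --- which is the rigorous version of the compactness idea you gesture at.
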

\begin{proof}
By definition of $K_{\Delta}$ it is clear that $\widetilde{\Psi}_{\Delta}$ induces a continuous map $\Psi_\Delta: K/K_{\Delta}\times S_{\Delta}\to \mm$. To prove that $\Psi_{\Delta}$ is an embedding, we closely follow the proof of \cite[Proposition 4.10]{Sakai1}. For the injectivity of $\Psi_{\Delta}$, let $k,k'\in K$ and $X,X'\in S_{\Delta}$ such that $\Ad_{k'}X' = \Ad_k X$. 
We need to show that $[k']= [k]$ in $K/K_{\Delta}$ and that $X=X'$.
Clearly, it holds that $$\Ad_{k^{-1}k'}X' = X.$$
Therefore, by \cite[Proposition VI.2.2]{helgason:78} we know that there is an element $s$ of the Weyl group $W(G,K)$ of the Riemannian pair $(G,K)$ such that $sX'= X$.
But since $X$ and $X'$ are in the closure of the same Weyl chamber, they have to be equal, see \cite[p. 131]{Sakai1}.
This also shows that $k^{-1}k'\in K_{\Delta}$, so $[k'] = [k]$ in $K/K_{\Delta}$.

In order to show that $\Psi_{\Delta}$ is an embedding, let $(k_n)_{n\in\NN}$ be a sequence in $K$ and $(X_n)_{n\in\NN}$ be a sequence in $S_{\Delta}$ such that $\Ad_{k_n}(X_n) \to \Ad_k X$ for $n\to \infty$, where $k\in K$ and $X\in S_{\Delta}$.
We want to show that $[k_n]\to [k]$ in $K/K_{\Delta}$ and $X_n\to X$ for $n\to\infty$.
Assume that this does not hold.
Then by compactness of $K$ there are $k_0\in K$ and $Y\in \overline{W}$ and there are subsequences $(k_{n_i})_{i\in\NN}$ and $(X_{n_i})_{i\in\NN}$ with $k_{n_i}\to k_0$ and $X_{n_i}\to Y$ for $i\to\infty$ with $([k_0],Y)\neq ([k],X)$.
By continuity of $\Ad$ we have $\Ad_{k_0}Y = \Ad_k X$ so as argued above for the injectivity, we obtain $X= Y$ and $[k_0] = [k]$ in $K/K_{\Delta}$ which gives a contradiction.
This shows the sequential continuity of $\Phi_\Delta^{-1}$, thereby yielding that $\Psi_{\Delta}$ is an embedding.

Finally, by \cite[p.133]{Sakai1} we have that $\widetilde{C}_{\Delta} = \mathrm{im}(\Psi_{\Delta})\subseteq \Cuttil_o(M)$. 
Moreover, it is clear by construction that $ \overline{\exp}(\widetilde{C}_{\Delta}) = C_{\Delta}     $.
In order to show that 
$$  (\overline{\exp}|_{\Cuttil_o(M)})^{-1}(C_{\Delta})\subseteq \widetilde{C}_{\Delta}   $$
let $X\in \Cuttil_o(M)$ such that $\overline{\exp}(X)\in C_{\Delta}$.
Then there is $k\in K$ with $$k \cdot \overline{\exp}(X) = \overline{\exp}(\Ad_k(X)) = \overline{\exp}(Y)$$ for some $Y\in S_{\Delta}$.
We set $q = \overline{\exp}(Y)$ and $\widetilde{X} = \Ad_k X$.
Clearly, $\widetilde{X}\in\Cuttil_o(M)$ and since $\widetilde{X}\in \overline{\exp}^{-1}(q)$ we have by \cite[Lemma 4.7]{Sakai1} that there is an $h\in Z_{\Delta}$ with $\widetilde{X} = \Ad(h)(Y)$.
But this implies that $X = \Ad(k^{-1} h )(Y)$ which shows that $X\in\widetilde{C}_{\Delta}$.
\end{proof}
It is clear by construction that the decomposition $\{C_{\Delta}\}_{\Delta\in\D}$ of $\Cut_o(M)$ is isotropy-invariant.
The next theorem shows that it is a fibered decomposition of $\Cut_o(M)$.

\begin{theorem} \label{theorem_symm_fibered}
Let $M = G/K$ be an irreducible compact simply connected symmetric space with $(G,K)$ being a Riemannian symmetric pair and let $p \in M$. Then the cut locus of $p$ admits a decomposition which is both isotropy-invariant and a fibered decomposition with the associated fibrations being fiber bundles.
\end{theorem}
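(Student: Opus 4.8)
The plan is to exhibit the Sakai decomposition $\{C_\Delta\}_{\Delta \in \D}$ of $\Cut_o(M)$ as a decomposition that is simultaneously isotropy-invariant and fibered, with the maps $\overline{\exp}|_{\widetilde{C}_\Delta}\colon \widetilde{C}_\Delta \to C_\Delta$ being fiber bundles. Isotropy-invariance has already been observed just before the statement: since $K$ is the isotropy group of $o$ and each $C_\Delta = \im \Phi_\Delta$ is built from the $\Ad_K$-orbit of $S_\Delta$, we have $k \cdot C_\Delta = C_\Delta$ for all $k \in K$. Likewise $\{C_\Delta\}$ is a locally compact decomposition by \cite[Theorem 5.3]{Sakai1} and the remark following it. So the entire content is to show that $\overline{\exp}|_{\widetilde{C}_\Delta}$ is a fiber bundle for each $\Delta \in \D$, and then transport the statement from $o$ to an arbitrary $p \in M$ using homogeneity.

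First I would set up the commuting square coming from the two embeddings of Lemma \ref{lemma_psi_delta} and Sakai's \cite[Proposition 4.10]{Sakai1}: the diagram with $K/K_\Delta \times S_\Delta \xrightarrow{\Psi_\Delta} \widetilde{C}_\Delta$ on the left, $K/Z_\Delta \times S_\Delta \xrightarrow{\Phi_\Delta} C_\Delta$ on the right, the vertical arrow on the left being the bundle projection $K/K_\Delta \to K/Z_\Delta$ crossed with $\id_{S_\Delta}$ induced by the inclusion $K_\Delta \subseteq Z_\Delta$, and $\overline{\exp}|_{\widetilde{C}_\Delta}$ across the top. Commutativity is immediate from $\overline{\exp}(\Ad_k(X)) = \widetilde{\Phi}_\Delta(k,X)$. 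Since $\Psi_\Delta$ and $\Phi_\Delta$ are homeomorphisms onto their images, $\overline{\exp}|_{\widetilde{C}_\Delta}$ is conjugate to the map $(K/K_\Delta \times S_\Delta) \to (K/Z_\Delta \times S_\Delta)$ which is just the product of the homogeneous-space bundle projection $K/K_\Delta \to K/Z_\Delta$ with the identity on $S_\Delta$. A principal-type quotient $K/K_\Delta \to K/Z_\Delta$ of a compact Lie group by nested closed subgroups is a fiber bundle with fiber $Z_\Delta/K_\Delta$ (local sections of $K \to K/Z_\Delta$ give local trivializations), hence so is the product with $\id_{S_\Delta}$; therefore $\overline{\exp}|_{\widetilde{C}_\Delta}\colon \widetilde{C}_\Delta \to C_\Delta$ is a fiber bundle with typical fiber $Z_\Delta/K_\Delta$. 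This establishes that $\{C_\Delta\}_{\Delta \in \D}$ is a fibered decomposition of $\Cut_o(M)$ whose fibrations are fiber bundles.

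Finally I would remove the dependence on the special point $o$. Since $G$ acts transitively by isometries, for arbitrary $p \in M$ pick $g \in G$ with $g \cdot o = p$; then $\Phi_g\colon M \to M$ is an isometry carrying $\Cut_o(M)$ homeomorphically onto $\Cut_p(M)$ and, via $(D\Phi_g)_o$, carrying $\Cuttil_o(M)$ onto $\Cuttil_p(M)$ compatibly with the exponential maps by naturality \cite[Proposition 5.20]{lee:2018}. Defining $B_i := \Phi_g(C_{\Delta_i})$ for an enumeration $\Delta_1,\dots,\Delta_k$ of $\D$ and $\widetilde{B}_i := (D\Phi_g)_o(\widetilde{C}_{\Delta_i})$, this transported decomposition is locally compact, is a fibered decomposition with fiber bundles $\widetilde{B}_i \to B_i$ (bundle structure is preserved under homeomorphism), and is isotropy-invariant: the isotropy group of $p$ is $gKg^{-1}$, and $(gkg^{-1}) \cdot B_i = \Phi_g(k \cdot C_{\Delta_i}) = \Phi_g(C_{\Delta_i}) = B_i$ using the isotropy-invariance at $o$. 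This yields the decomposition of $\Cut_p(M)$ claimed in the theorem.

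The main obstacle is the middle step: being confident that $\overline{\exp}|_{\widetilde{C}_\Delta}$ is genuinely \emph{locally trivial} rather than merely a surjection with homeomorphic fibers. The cleanest route is exactly the conjugation-by-$\Psi_\Delta$-and-$\Phi_\Delta$ argument above, which reduces the problem to the standard fact that $K/K_\Delta \to K/Z_\Delta$ is a bundle — but this requires knowing that both $\Psi_\Delta$ (from Lemma \ref{lemma_psi_delta}) and $\Phi_\Delta$ (from \cite[Proposition 4.10]{Sakai1}) are homeomorphisms onto the relevant subspaces $\widetilde{C}_\Delta$ and $C_\Delta$ with their subspace topologies, which is precisely what those two results supply. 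One should also double-check that $Z_\Delta/K_\Delta$ is the correct typical fiber $C_i$ referenced in Theorem \ref{theorem_isotropy_fibered}; this is automatic once the trivializations are written down.
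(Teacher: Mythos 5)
Your proposal is correct and follows essentially the same route as the paper: identify $\overline{\exp}|_{\widetilde{C}_\Delta}$ with the map $K/K_\Delta\times S_\Delta\to K/Z_\Delta\times S_\Delta$ via the embeddings $\Psi_\Delta$ and $\Phi_\Delta$, and invoke the standard fact that $K/K_\Delta\to K/Z_\Delta$ is a fiber bundle with fiber $Z_\Delta/K_\Delta$. The only (harmless) difference is that you make explicit the transport from the base point $o$ to an arbitrary $p$ by homogeneity, which the paper leaves implicit.
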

\begin{proof}
As we have already argued, the decomposition of $\Cut_o(M)$ into the $C_{\Delta}$, $\Delta \in \D$, is a decomposition into locally compact subsets and is isotropy-invariant.
Hence, it remains to show that it is a fibered decomposition.
Let $\Delta\in \D$ and consider the map
$$\chi\colon K/K_{\Delta}\times S_{\Delta}\to K/Z_{\Delta}\times S_{\Delta}, \qquad \chi( k K_{\Delta}, X) = ( k Z_{\Delta},X) .    $$
We derive from Lemma \ref{lemma_psi_delta} that the diagram
$$ 
\begin{tikzcd}
K/K_{\Delta} \times S_{\Delta} \arrow[]{r}{\chi} \arrow[]{d}{\Psi_{\Delta}} &
K/Z_{\Delta}\times S_{\Delta} \arrow[]{d}{\Phi_{\Delta}} 
\\
\widetilde{C}_{\Delta} \arrow[]{r}{\overline{\exp}|_{\widetilde{C}_\Delta}} & C_{\Delta} 
\end{tikzcd} 
$$
commutes where the vertical arrows are homeomorphisms. It is well-known, see e.g. \cite[Theorem I.7.4]{steenrod:2016}, that the canonical map $K/K_{\Delta}\to K/Z_{\Delta}$ is a fiber bundle with typical fiber $Z_{\Delta}/K_{\Delta}$.
Consequently, the above commutative diagram shows that $\overline{\exp}|_{\widetilde{C}_{\Delta}}\colon \widetilde{C}_{\Delta}\to C_{\Delta}$ is a fiber bundle with typical fiber $Z_{\Delta}/K_{\Delta}$.
Since this holds for all $\Delta\in\D$ we have shown that the decomposition $\{C_{\Delta}\}_{\Delta\in\D}$ is a fibered decomposition with the associated fibrations being fiber bundles.
\end{proof}
Combining Theorems \ref{theorem_isotropy_fibered} and \ref{theorem_symm_fibered} we obtain the following.
\begin{cor} \label{cor_fibered_symm}
Let $M$ be an irreducible compact simply connected symmetric space.
Then the total cut locus $\Cut(M)$ admits a fibered decomposition and the associated fibrations are fiber bundles.
\end{cor}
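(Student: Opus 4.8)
The plan is to obtain the statement as a direct concatenation of Theorems \ref{theorem_symm_fibered} and \ref{theorem_isotropy_fibered}: the first produces, for a single point, a decomposition of its cut locus carrying exactly the structure required as input by the second, and the second then propagates it to the total cut locus. So the only thing I would actually need to verify is that the hypotheses of Theorem \ref{theorem_isotropy_fibered} are met in the symmetric-space setting.

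First I would fix a presentation $M = G/K$ with $(G,K)$ a Riemannian symmetric pair and $G$ compact, which is possible since $M$ is compact. Then $G$ acts transitively on $M$ by isometries, and compactness of $G$ guarantees that $M$ is complete, so the homogeneous framework set up just before Theorem \ref{theorem_isotropy_fibered} applies with base point $p = o = \pi(e)$ and isotropy group $K$. Next I would invoke Theorem \ref{theorem_symm_fibered}, which hands me a decomposition $\{C_\Delta\}_{\Delta \in \D}$ of $\Cut_o(M)$ that is simultaneously a locally compact decomposition, isotropy-invariant, and a fibered decomposition in the sense of Definition \ref{def_fibered_decomp}.(2) whose associated fibrations $\widetilde{C}_\Delta \to C_\Delta$ are fiber bundles. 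At this step I would note that the symmetric-space exponential $\overline{\exp}$ coincides with the Riemannian exponential $\exp_o$ under the linear isometry $\mm \cong T_o M$, so this genuinely is a fibered decomposition of $\Cut_o(M)$ in the required sense.

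With these verifications in place, I would apply Theorem \ref{theorem_isotropy_fibered} to the decomposition $B_\Delta := C_\Delta$. Its conclusion is exactly that the associated sets $A_\Delta \subseteq \Cut(M)$ form a locally compact decomposition of $\Cut(M)$ for which each restriction $\Exp|_{\widetilde{A}_\Delta} \colon \widetilde{A}_\Delta \to A_\Delta$ is a fiber bundle, with typical fiber the fiber $Z_\Delta / K_\Delta$ of $\widetilde{C}_\Delta \to C_\Delta$. Since a fiber bundle is in particular a Hurewicz fibration, this says precisely that $\{A_\Delta\}_{\Delta \in \D}$ is a fibered decomposition of $\Cut(M)$ with the associated fibrations being fiber bundles, which is the assertion.

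I do not expect any serious obstacle here: the two theorems were proved with exactly this corollary in view, so the remaining work is organizational. The one bookkeeping point I would be careful about is that the set $\widetilde{A}_\Delta$ defined combinatorially via the images $(D\Phi_g)_p(\widetilde{B}_\Delta)$ agrees with the set $\Exp^{-1}(A_\Delta) \cap \Cuttil(M)$ demanded by Definition \ref{def_fibered_decomp}.(1); but this identification is already carried out inside the proof of Theorem \ref{theorem_isotropy_fibered} — it is what makes the square relating $\psi_i$ and $\varphi_i$ commute — so nothing further is required.
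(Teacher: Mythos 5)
Your proposal is correct and is precisely the route the paper takes: the corollary is stated as an immediate consequence of combining Theorems \ref{theorem_symm_fibered} and \ref{theorem_isotropy_fibered}, and your write-up simply spells out the hypothesis-checking that the paper leaves implicit. The bookkeeping points you flag (the identification $\overline{\exp} = \exp_o$ under $\mm \cong T_o M$, and the agreement of the combinatorially defined $\widetilde{A}_\Delta$ with $\Exp^{-1}(A_\Delta)\cap\Cuttil(M)$) are indeed the only things that need verifying, and they are handled as you describe.
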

For $\Delta \in \D$ let $A_{\Delta}\subseteq \Cut(M)$ and $\widetilde{A}_{\Delta}\subseteq \Cuttil(M)$ be the subsets of the total cut locus and the total tangent cut locus, resp., induced by the $C_{\Delta}$ as described in Section \ref{sec_fibered_decomp}.
The set $\pi(G,K)$ consists of precisely $r=\mathrm{rank} \, M$ elements. 
For each $i \in \{1,2,\dots,r\}$ we set 
$$\D_i := \{ \Delta \in \D \ | \ \#\Delta=i \} \qquad \text{and}\qquad A_i := \bigcup_{\Delta \in \D_i} A_\Delta.$$ 
Note that by \cite[Lemma 5.2]{Sakai1}, we have for all $i\in\{1,\ldots,r\}$ that
$$   \overline{C}_{\Delta} \cap C_{\Delta'}  = \emptyset \qquad \text{for}\,\,\,\Delta,\Delta'\in \D_i,  \,\, \Delta\neq \Delta' .     $$
It is easy to see that the same relation then holds for the $A_{\Delta}$, i.e.
\begin{equation}
\label{eq_Adelta}
   \overline{A}_{\Delta} \cap A_{\Delta'}  = \emptyset \qquad \text{for}\,\,\,\Delta,\Delta'\in \D_i  , \,\, \Delta\neq \Delta' .     
  \end{equation}
Therefore, if we have a locally compact decomposition of all $A_{\Delta}$, $\Delta\in\D_i$, then we can combine geodesic motion planners in the following way.
\begin{theorem} \label{theorem_upp_bound_symm_spaces}
Let $M$ be an irreducible compact simply connected symmetric space of rank $r$.
Then the geodesic complexity of $M$ can be estimated by
$$   \GC(M) \leq \sum_{i=1}^r \max\{ \secat(\Exp|_{\widetilde{A}_{\Delta}}\colon \widetilde{A}_{\Delta} \to A_{\Delta} )\,|\, \Delta \in\D_i\} + 1 .     $$
\end{theorem}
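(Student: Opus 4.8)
The plan is to regroup the fibered decomposition $\{A_{\Delta}\}_{\Delta \in \D}$ of $\Cut(M)$ provided by Corollary~\ref{cor_fibered_symm} into the coarser decomposition $\{A_i\}_{i=1}^{r}$, to verify that the latter is again a fibered decomposition, and to bound the sectional category of each of its pieces by the corresponding maximum over the stratum $\D_i$; the asserted estimate then follows immediately from Theorem~\ref{theorem_fibered_decomp}. Throughout I use that, because $\{A_{\Delta}\}_{\Delta\in\D}$ is a fibered decomposition in the sense of Definition~\ref{def_fibered_decomp}, one has $\widetilde{A}_{\Delta} = \Exp^{-1}(A_{\Delta}) \cap \Cuttil(M)$ and $\Exp|_{\widetilde{A}_{\Delta}}\colon \widetilde{A}_{\Delta}\to A_{\Delta}$ is a fiber bundle for every $\Delta\in\D$.

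The key step is to extract the topological content of \eqref{eq_Adelta}. Fix $i\in\{1,\dots,r\}$. The sets $A_{\Delta}$ with $\Delta\in\D_i$ are pairwise disjoint, and \eqref{eq_Adelta} gives $\overline{A}_{\Delta}\cap A_{\Delta'}=\emptyset$ whenever $\Delta\neq\Delta'$. Intersecting with $A_i:=\bigcup_{\Delta\in\D_i}A_{\Delta}$ therefore yields $\overline{A}_{\Delta}\cap A_i=A_{\Delta}$, so each $A_{\Delta}$ is closed in $A_i$; since the complement of $A_{\Delta}$ in $A_i$ is the union of the remaining, finitely many, closed sets $A_{\Delta'}$, it is also open in $A_i$. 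Hence $A_i$ is the topological disjoint union of the clopen pieces $A_{\Delta}$, $\Delta\in\D_i$. Three consequences follow: $A_i$ is a locally compact subspace of $M\times M$, being a finite disjoint union of locally compact spaces; the sets $A_1,\dots,A_r$ form a locally compact decomposition of $\Cut(M)$, as they are pairwise disjoint and cover $\Cut(M)$ because the $\D_i$ partition $\D$; and, setting $\widetilde{A}_i:=\Exp^{-1}(A_i)\cap\Cuttil(M)=\bigcup_{\Delta\in\D_i}\widetilde{A}_{\Delta}$, the restriction $\Exp|_{\widetilde{A}_i}\colon\widetilde{A}_i\to A_i$ is a fiber bundle, being a disjoint union over the clopen sets $A_{\Delta}$ of the fiber bundles $\Exp|_{\widetilde{A}_{\Delta}}$. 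In particular $\{A_i\}_{i=1}^{r}$ is a fibered decomposition of $\Cut(M)$.

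Next I would set $m_i:=\max\{\secat(\Exp|_{\widetilde{A}_{\Delta}}\colon\widetilde{A}_{\Delta}\to A_{\Delta})\mid\Delta\in\D_i\}$ and show that $\secat(\Exp|_{\widetilde{A}_i})\leq m_i$. For each $\Delta\in\D_i$, choose an open cover $U_{\Delta,1},\dots,U_{\Delta,m_i}$ of $A_{\Delta}$, padded with empty sets if necessary, such that $\Exp|_{\widetilde{A}_{\Delta}}$ admits a continuous local section over each $U_{\Delta,j}$. Since $A_{\Delta}$ is clopen in $A_i$, the set $U_j:=\bigcup_{\Delta\in\D_i}U_{\Delta,j}$ is open in $A_i$, and $U_{\Delta,j}=U_j\cap A_{\Delta}$ is clopen in $U_j$; hence these sections glue to a continuous section of $\Exp|_{\widetilde{A}_i}$ over $U_j$. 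As $U_1,\dots,U_{m_i}$ cover $A_i$, this proves the claimed bound. Applying Theorem~\ref{theorem_fibered_decomp} to the fibered decomposition $\{A_i\}_{i=1}^{r}$ of $\Cut(M)$ and then using this bound gives
$$ \GC(M) \leq \sum_{i=1}^{r}\secat(\Exp|_{\widetilde{A}_i})+1 \leq \sum_{i=1}^{r}m_i+1, $$
which is precisely the asserted inequality. Alternatively, one may avoid the regrouping and argue exactly as in the proof of Theorem~\ref{theorem_fibered_decomp}, refining each $A_{\Delta}$ into $m_i$ locally compact pieces $C_{\Delta,1},\dots,C_{\Delta,m_i}$ carrying continuous geodesic motion planners via \cite[Lemma 4.1]{mescher:2021}, forming $C_{i,j}:=\bigcup_{\Delta\in\D_i}C_{\Delta,j}$, and gluing the motion planners along the clopen subspaces $C_{\Delta,j}\subseteq C_{i,j}$ to obtain a locally compact decomposition of $\Cut(M)$ into $\sum_{i=1}^{r}m_i$ sets that each admit a continuous geodesic motion planner.

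I expect the only genuinely non-routine point to be the second paragraph above: extracting from the separation property \eqref{eq_Adelta} that the sets $A_{\Delta}$ with $\#\Delta=i$ are clopen in their union $A_i$. This clopen-ness is exactly what justifies the gluing of the local sections, respectively of the motion planners in the alternative argument, and hence what allows one to pay, within each cardinality stratum $\D_i$, only the maximum of the sectional categories rather than their sum — the improvement over the bound obtained by feeding $\{A_{\Delta}\}_{\Delta\in\D}$ directly into Theorem~\ref{theorem_fibered_decomp}.
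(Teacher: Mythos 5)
Your proof is correct, and your ``alternative'' argument at the end is essentially the paper's actual proof: decompose each $A_\Delta$ with $\Delta\in\D_i$ into pieces carrying motion planners via \cite[Lemma 4.1]{mescher:2021}, pad to a common length $m_i$, take unions across $\Delta\in\D_i$, and use \eqref{eq_Adelta} to see that the glued motion planner on each union is continuous, giving $\GC_M(A_i)\leq m_i$ and hence $\GC_M(\Cut(M))\leq\sum_i m_i$. Your main route differs cosmetically but not substantively: instead of gluing the motion planners directly, you first show that the coarsened cover $\{A_i\}_{i=1}^r$ is itself a fibered decomposition (regrouping the fibrations and verifying local compactness), bound $\secat(\Exp|_{\widetilde{A}_i})$ by gluing local sections over clopen pieces, and then feed this into Theorem~\ref{theorem_fibered_decomp}. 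What this buys is a slightly cleaner conceptual packaging — the result appears as a direct application of Theorem~\ref{theorem_fibered_decomp} to a new fibered decomposition — and it makes explicit a point the paper leaves implicit, namely that the $A_i$ are themselves locally compact. The key observation, extracting from the separation property $\overline{A}_\Delta\cap A_{\Delta'}=\emptyset$ that each $A_\Delta$ is clopen in $A_i$, is the same in both arguments and is what justifies gluing; you are right to flag it as the one non-routine step.
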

\begin{proof}
Let $i\in\{1,\ldots,r\}$ and assume that for each $\Delta\in\D_i$ we have a locally compact decomposition $B_{\Delta,1},\ldots,B_{\Delta,k_{\Delta}}$ of $A_\Delta$ such that for each $j\in\{1,\ldots,k_{\Delta}\}$ there is a continuous geodesic motion planner $s_{\Delta,j}\colon B_{\Delta,j}\to GM$.
Let $m_i = \max\{ k_{\Delta}\,|\, \Delta\in\D_i\} $ and set $B_{\Delta,j} = \emptyset$ for $k_{\Delta}< j\leq m_i$.
For $l= 1,\ldots,m_i$ put
$$   B_l = \bigcup_{\Delta\in\D_i} B_{\Delta,l}     $$
and define a geodesic motion planner $s_l\colon B_l\to GM$ by
$$  s_l (q,r) = s_{\Delta,l}(q,r) \qquad \text{if}\,\,\, (q,r)\in B_{\Delta,l} .     $$
It follows from \eqref{eq_Adelta} that this defines a continuous geodesic motion planner on $B_l$. Since the sets  $B_1,\dots,B_{m_i}$ form a decomposition of $A_i$, this shows that $\GC_M(A_i)\leq m_i$.
Arguing as in the proof of Theorem \ref{theorem_fibered_decomp}, one further shows that 
$$k_{\Delta} \leq \secat( \Exp|_{\widetilde{A}_{\Delta}}\colon \widetilde{A}_{\Delta} \to A_{\Delta} ) \qquad \forall \Delta \in \D_i,$$ 
which in turn yields $m_i \leq \max\{\secat(\Exp|_{\widetilde{A}_{\Delta}}) \ | \ \Delta \in \D_i\}$ for each $i \in \{1,2,\dots,r\}$. Eventually, we derive that
$$\GC_M(\Cut(M)) \leq \sum_{i=1}^r \GC_M(A_i) \leq \sum_{i=1}^r m_i \leq \sum_{i=1}^r \max\{\secat(\Exp|_{\widetilde{A}_{\Delta}}) \ | \ \Delta \in \D_i\}.$$
\end{proof}
Throughout the following, we shall always write $\cong$ to indicate that two manifolds are diffeomorphic. We further let $\mathbb{S}^n$ denote the $n$-sphere with its standard differentiable structure for each $n \in \NN$.

\begin{example}
Consider the complex Grassmannian $\mathrm{Gr}_2(\CC^4)$ which is an irreducible compact symmetric space of rank $2$.
As shown in \cite[p.143]{Sakai1} and \cite[Example 8.5]{mescher:2021}, the cut locus $\Cut_o(M)$ can be decomposed into 
$$  C_{\Delta_1}\cong \mathbb{S}^2\times \mathbb{S}^2, \qquad C_{\Delta_2}\cong \{*\}     $$
and a six-dimensional manifold $C_{\Delta_0}$.
As discussed in \cite[Example 8.5]{mescher:2021}, these three spaces are simply connected.
Note that $\D_1 = \{\Delta_1,\Delta_2\}$.
The decomposition of the cut locus of $o$ induces a decomposition of $\Cut(M)$ as in Section \ref{sec_fibered_decomp} and we shall call the induced sets $A_{\Delta_0}, A_{\Delta_1}$ and $A_{\Delta_2}$.
In order to apply Theorem \ref{theorem_upp_bound_symm_spaces}, we need to find upper bounds for 
$$   \secat(\Exp|_{\widetilde{A}_{\Delta_i}}\colon \widetilde{A}_{\Delta_i}\to A_{\Delta_i}) \quad \text{for}\,\,\, i = 0,1,2.    $$
Fix $i\in\{0,1,2\}$.
By \cite[Theorem 18]{schwarz:1966}, we have $\secat(\pi\colon E\to B)\leq \cat(B)$ for any fibration $\pi$ where $\cat(B)$ is the Lusternik-Schnirelmann category of $B$.
Consequently, we obtain
$$ \secat(\Exp|_{\widetilde{A}_{\Delta_i}}\colon \widetilde{A}_{\Delta_i}\to A_{\Delta_i})  \leq \cat(A_{\Delta_i}) .  $$
Note that $\mathrm{Gr}_2(\CC^4)$ and $C_{\Delta_i}$ are simply connected, therefore $A_{\Delta_i}$ is simply connected since it is a fiber bundle over $\mathrm{Gr}_2(\CC^4)$ with typical fiber $C_{\Delta_i}$ by Lemma \ref{lemma_fiber_bundle}.
Therefore, we get the estimate
$$  \cat(A_{\Delta_i}) \leq \frac{\mathrm{dim}(A_{\Delta_i})}{2} + 1 = \frac{\mathrm{dim}(\mathrm{Gr}_2(\CC^4)) + \mathrm{dim}(C_{\Delta_i})}{2} + 1 = \frac{\dim C_{\Delta_i}}{2}+5    $$
by \cite[Theorem 1.50]{cornea:2003}.
Explicitly, we obtain
$$   \cat(A_{\Delta_0})\leq 8, \quad \cat(A_{\Delta_1})\leq 7 \quad \text{and}\quad \cat(A_{\Delta_2})\leq  5  . $$
Consequently, by Theorem \ref{theorem_upp_bound_symm_spaces}, we see that
$$  \GC(M) \leq \cat(A_{\Delta_0}) + \max\{ \cat(A_{\Delta_1}),\cat(A_{\Delta_2})\} + 1 = 16 .    $$
Note that this improves the upper bound in \cite[Example 8.5]{mescher:2021}.
\end{example}

\begin{theorem} \label{theorem_projective}
Let $M =  \CC P^n$ or $\mathbb{H}P^n$ equipped with the standard or Fubini-Study metric, where $n \in \NN$. 
Then its geodesic complexity satisfies
$$  \GC( M) = 2n + 1 .      $$
In particular, one has
$$   \GC(M) = \TC(M) .    $$
\end{theorem}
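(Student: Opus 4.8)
The strategy is to prove the two inequalities $\GC(M) \le 2n+1$ and $\GC(M) \ge 2n+1$ separately, the former via the fibered-decomposition machinery developed above and the latter via the known lower bound $\GC(M) \ge \TC(M)$ together with a computation of $\TC(\CC P^n)$ and $\TC(\mathbb{H}P^n)$. For the upper bound, the key observation is that both $\CC P^n$ and $\mathbb{H}P^n$ are rank-one irreducible compact simply connected symmetric spaces, so the set of simple roots $\pi(G,K)$ has a single element $\gamma$, which is then also the highest root; hence $\D = \{\{\gamma\}\}$, there is exactly one stratum $C_{\{\gamma\}}$, and the total cut locus $\Cut(M)$ itself carries a single fibration $\pi = \Exp|_{\Cuttil(M)} \colon \Cuttil(M) \to \Cut(M)$, which is a fiber bundle by Corollary \ref{cor_fibered_symm}. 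In this situation $\Cut_o(M) = C_{\{\gamma\}} \cong \CC P^{n-1}$ (resp. $\mathbb{H}P^{n-1}$), the cut locus being a projective space of one dimension less. I would then identify the typical fiber of $\pi$: on the base point side it is $Z_{\{\gamma\}}/K_{\{\gamma\}}$, and over the whole of $M$ the set $A_{\{\gamma\}} = \Cut(M)$ is a bundle over $M$ with fiber $\Cut_o(M)$. By Corollary \ref{cor_single_fibration} (once one checks that $\Cut(M)$ and $\Cuttil(M)$ are absolute neighborhood retracts, which holds since they are total spaces of fiber bundles with manifold fibers over a manifold, hence themselves topological manifolds or at least ANRs) one gets $\GC_{(M,g_{\mathrm{sym}})}(\Cut(M)) = \secat(\pi)$ and $\GC(M) \le \secat(\pi) + 1$. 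It therefore remains to show $\secat(\pi) \le 2n$.

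For the sectional-category estimate I would use $\secat(\pi) \le \cat(\Cut(M))$ from \cite[Theorem 18]{schwarz:1966}, so it suffices to bound the Lusternik–Schnirelmann category of $\Cut(M)$. Since $\Cut(M)$ is a fiber bundle over $M$ with fiber $\CC P^{n-1}$ (resp. $\mathbb{H}P^{n-1}$), it is a closed manifold of dimension $\dim M + \dim \Cut_o(M) = 2n + 2(n-1) = 4n-2$ in the complex case (and $4n + 4(n-1) = 8n-4$ in the quaternionic case), and it is simply connected because $M$ and the fiber are. Applying the dimension bound $\cat(X) \le \tfrac{\dim X}{2} + 1$ for simply connected $X$ from \cite[Theorem 1.50]{cornea:2003} would give $\cat(\Cut(\CC P^n)) \le 2n$ exactly, and $\cat(\Cut(\mathbb{H}P^n)) \le 4n-1$, which is already $\le 2n$ only for small $n$ — so the crude dimension bound is not sharp enough in the quaternionic case. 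The honest route is instead to identify $\Cut(M)$ up to homeomorphism: for $M = \CC P^n$ the total cut locus should be identifiable with a projectivized bundle, and in fact I expect $\Cut(\CC P^n)$ to be homotopy equivalent to (or to have the LS-category of) a space whose category is exactly $2n$, matching $\cat(\CC P^n \times \CC P^n) = 2n$ is too large; rather one wants $\cat(\Cut(M)) = 2n$ on the nose. The cleanest argument is: $\Cut_o(\CC P^n) \cong \CC P^{n-1}$ is totally geodesic, and $\Exp|_{\Cuttil(M)}$ being a bundle with base $\Cut(M)$ a bundle over $M$, one computes the cup-length of $\Cut(M)$. I would compute $H^*(\Cut(M))$ via the Leray–Hirsch theorem for the bundle $\CC P^{n-1} \to \Cut(M) \to \CC P^n$ — the Euler class obstruction vanishes for the relevant tautological construction — obtaining a cup-length of $2n$, hence $\cat(\Cut(M)) \ge 2n + 1$... which is the wrong direction. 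So the decisive input must be an \emph{upper} bound $\cat(\Cut(M)) \le 2n$, which I would get by exhibiting an explicit categorical open cover with $2n$ sets, built from the standard $n+1$-set cover of $\CC P^n$ pulled back along the bundle projection and refined using the $n$-set cover of the fiber $\CC P^{n-1}$; the fibered structure lets one interleave these. \textbf{This cover construction is the main obstacle.}

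For the lower bound, $\GC(M) \ge \TC(M)$ holds by the first remark after Definition \ref{def_gc}, and the topological complexity of complex and quaternionic projective space is classical: $\TC(\CC P^n) = 2n+1$ and $\TC(\mathbb{H}P^n) = 2n+1$ follow from the cohomological lower bound using the zero-divisor cup-length in $H^*(M \times M)$ — the class $(x \otimes 1 - 1 \otimes x)$ with $x$ the degree-$2$ (resp. degree-$4$) generator has $(x\otimes 1 - 1\otimes x)^{2n} \ne 0$ but the next power dies, giving zcl $= 2n$, hence $\TC \ge 2n+1$; combined with the dimension/connectivity upper bound $\TC(M) \le 2n+1$ this is an equality (these are standard, e.g. from Farber's work, and I would simply cite them). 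Putting the two inequalities together yields $\GC(M) = 2n+1 = \TC(M)$, proving the theorem. I would organize the write-up as: (1) reduce to $\secat(\pi) \le 2n$ via Corollary \ref{cor_single_fibration}; (2) identify $\Cut_o(M)$ and the bundle $\Cut(M) \to M$; (3) produce the $2n$-element categorical cover of $\Cut(M)$; (4) invoke $\secat \le \cat$; (5) add the $\TC$ lower bound and conclude.
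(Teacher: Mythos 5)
Your overall architecture matches the paper's: the lower bound comes from $\GC(M)\geq\TC(M)$ with $\TC(\CC P^n)=\TC(\mathbb{H}P^n)=2n+1$, and the upper bound from the single fibration $\Exp|_{\Cuttil(M)}\colon\Cuttil(M)\to\Cut(M)$ provided by Corollary \ref{cor_fibered_symm}, together with $\secat\leq\cat$ and a dimension bound on $\cat(\Cut(M))$, where $\Cut(M)$ is a simply connected bundle over $M$ with fiber $\Cut_o(M)\cong\CC P^{n-1}$ resp. $\mathbb{H}P^{n-1}$ by Lemma \ref{lemma_fiber_bundle}. For $\CC P^n$ your argument is complete and is essentially the paper's. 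For $\mathbb{H}P^n$, however, you correctly observe that the crude estimate $\cat(X)\leq\dim(X)/2+1$ for simply connected $X$ only yields $4n-1$, and you then fail to close the gap: the cup-length detour produces a \emph{lower} bound on $\cat$ (wrong direction, as you note yourself), and the explicit $2n$-element categorical cover you would need is left unconstructed and flagged as ``the main obstacle.'' This is a genuine gap.

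The missing ingredient is that the dimension bound of \cite[Theorem 1.50]{cornea:2003} is connectivity-sensitive: for a $(q-1)$-connected CW complex $X$ one has $\cat(X)\leq\dim(X)/q+1$. Since $\mathbb{H}P^n$ and the fiber $\mathbb{H}P^{n-1}$ are $3$-connected, the long exact homotopy sequence of the bundle $\Cut(\mathbb{H}P^n)\to\mathbb{H}P^n$ shows that $\Cut(\mathbb{H}P^n)$ is $3$-connected, so with $q=4$ and $\dim\Cut(\mathbb{H}P^n)=8n-4$ one obtains $\cat(\Cut(\mathbb{H}P^n))\leq(8n-4)/4+1=2n$, exactly as needed; this is precisely how the paper treats the quaternionic case. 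Two smaller remarks: the ANR verification you propose is unnecessary for the upper bound, since Theorem \ref{theorem_fibered_decomp} already gives $\GC(M)\leq\secat(\pi)+1$ without it (Corollary \ref{cor_single_fibration} is only required for the lower bound on $\GC_{(M,g)}(\Cut(M))$, which you replace by the $\TC$ bound anyway); and the case $n=1$ should be handled separately, since the identification of $\Cut_p(M)$ with a lower projective space is invoked for $n\geq2$ — the paper settles $n=1$ via the isometries $\CC P^1\cong\mathbb{S}^2$, $\mathbb{H}P^1\cong\mathbb{S}^4$ and $\GC(\mathbb{S}^2)=\GC(\mathbb{S}^4)=3$.
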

\begin{proof}
Since $\CC P^n$ and $\mathbb{H}P^n$ are simply connected symmetric spaces of rank one, we know by \cite[Theorem 5.3]{Sakai1} and Corollary \ref{cor_fibered_symm} that the restriction
$$  \Exp|_{\Cuttil(M)}\colon \Cuttil(M)\to \Cut(M)     $$
is a fibration.
Moreover for $n\geq 2$, the cut locus of a point satisfies
$$  \Cut_p(\CC P^n) \cong \CC P^{n-1}  \quad \text{and}\quad \Cut_q(\mathbb{H}P^n) \cong \mathbb{H}P^{n-1} ,  $$
where $p\in \CC P^n$ and $q\in \mathbb{H}P^n$, see \cite[Proposition 3.35]{besse:1978}.
By Lemma \ref{lemma_fiber_bundle} we see that $\Cut(\CC P^n)$ is a fiber bundle over $\CC P^n$ with typical fiber $\CC P^{n-1}$.
Since $\CC P^n$ is simply connected for each $n\geq 1$, it follows that $\Cut(\CC P^n)$ is simply connected as well for all $n\geq 2$.
By \cite[Theorem 18]{schwarz:1966} and \cite[Theorem 1.50]{cornea:2003} we obtain
$$  \secat(\Cuttil(\CC P^n)\to \Cut(\CC P^n)) \leq \frac{\mathrm{dim}(\Cut(\CC P^n))}{2} + 1 = 2n .    $$
Consequently by Theorem \ref{theorem_fibered_decomp} we obtain
$$  \GC(\CC P^n) \leq   \secat(\Cuttil(\CC P^n)\to \Cut(\CC P^n))  + 1 \leq 2n + 1   $$
for $n\geq 2$.
Since $\TC(\CC P^n) = 2n+ 1$ by \cite[Lemma 28.1]{farber:2006}, we obtain
$$   \GC(\CC P^n) = \TC(\CC P^n) = 2n + 1     $$
for $n\geq 2$.
The argument for $\mathbb{H}P^n$ is similar, using that $\mathbb{H}P^n$ is $3$-connected for all $n\geq 1$ and that $\TC(\mathbb{H}P^n)  = 2n+1$ by \cite[Corollary 3.16]{basabe:2014}.
Finally, for $n=1$ we have that $\CC P^1$ is isometric to $\mathbb{S}^2$ and $\mathbb{H}P^1$ is isometric to $\mathbb{S}^4$, where both $\mathbb{S}^2$ and $\mathbb{S}^4$ are equipped with the standard metric.
It is well-known that $\GC(\mathbb{S}^{2}) = \GC(\mathbb{S}^4) = 3$, see \cite[Proposition 4.1]{reciomitter:2021}, so this proves the assertion for $n = 1$.
\end{proof}

\section{Three-dimensional lens spaces} \label{sec_lens}

In this section we show that the total cut locus of a lens space of the form $L(p;1)$ with a metric of constant sectional curvature admits a fibered decomposition.
It is thus an example of a homogeneous Riemannian manifold which has this property, but which is not a globally symmetric space, see e.g. \cite[p. 105]{gilkey:2015}.
We will use the explicit fibered decomposition to derive an upper bound for the geodesic complexity of three-dimensional lens spaces of type $L(p;1)$.
We start by studying the cut locus of a point in the lens space $L(p;1)$, which was explicitly described by S. Anisov in \cite{anisov:2006}.
However, we give a self-contained exposition in this section, since we will need a detailed description of the tangent cut locus and of the cut locus in this setting.\medskip 

We consider the $3$-sphere as a subspace of $\CC^2$, i.e.
$$   \mathbb{S}^3 = \{ (z_1,z_2)\in\CC^2\,|\, z_1\overline{z}_1 + z_2\overline{z}_2 = 1\} .  $$
In the following we will also consider $\mathbb{S}^3$ as embedded in $\RR^4$ under the standard identification $\CC^2\cong\mathbb{R}^4$.
The special unitary group $SU(2)$ acts transitively on the $3$-sphere. 
Furthermore, for arbitrary $p\geq 3$, we have an action of $\ZZ_p$ on $\mathbb{S}^3$ denoted by $\Psi\colon \ZZ_p\times \mathbb{S}^3\to \mathbb{S}^3$, where $\ZZ_p$ is the cyclic group with $p$ elements, given by
\begin{equation} \label{eq_zp_action}
     \Psi(m, (z_1,z_2)) \mapsto  ( e^{\frac{2\pi i m }{p}} z_1 , e^{\frac{2 \pi im }{p}}z_2)   .   
\end{equation}
It is easy to see that this action is properly discontinuous.
If we equip $\mathbb{S}^3$ with the standard metric, then $\Psi$ is an action by isometries.
Consequently, we can equip the quotient $L(p;1) = \mathbb{S}^3/\ZZ_p$ with a metric for which $\pi\colon \mathbb{S}^3 \to L(p;1)$ becomes a Riemannian covering. We henceforth always consider $L(p;1)$ as equipped with such a metric.
The space $L(p;1)$ is called a \textit{lens space}.
Furthermore, note that the metric on $L(p;1)$ constructed in this way is a metric of constant sectional curvature. By the Killing-Hopf theorem all metrics of constant sectional curvature on $L(p;1)$ arise in this way, see e.g. \cite[Theorem 12.4 and Corollary 12.5]{lee:2018}.

Note that the action of $\ZZ_p$ on $\mathbb{S}^3$ commutes with the action of $SU(2)$.
Thus, $SU(2)$ acts on $L(p;1)$ and in particular this action is transitive, since it is already transitive on $\mathbb{S}^3$.
In the following we fix the point $p_0 = \pi(1,0)\in L(p;1)$. Its isotropy group under the $SU(2)$-action on $L(p;1)$ is easily seen to be
\begin{equation} \label{eq_lens_isotropy}
      K = \left\{ \begin{pmatrix} e^{\frac{2\pi i k}{p}} & 0 \\ 0 & e^{-\frac{2\pi i k}{p}} \end{pmatrix}\,\middle|\,k\in\{0,\ldots,p-1\} \right\} \cong \mathbb{Z}_p .  
\end{equation}
Note that for more general lens spaces of the form $L(p;q)$ where $p$ and $q$ are coprime with $q\neq 1$, see e.g. \cite[Example 2.43]{hatcher:2002}, the isometry group does not act transitively in general.
See \cite{kalliongis:2002} for details on the isometry groups of lens spaces.\medskip 


In order to describe the cut locus of a point $p_0\in L(p;1)$, let us first consider the more general situation of a Riemannian covering $\pi\colon\widetilde{M}\to M$. The following exposition closely follows \cite[Section 3]{ozols:1974}.

It is well known that geodesics in $\widetilde{M}$ are mapped to geodesics in $M$ under the Riemanian covering map $\pi$.
Assume that $M \cong \widetilde{M}/\Gamma$ where $\Gamma$ is a finite group of isometries of $\widetilde{M}$ acting properly discontinuously. Let $\mathrm{d}: \widetilde{M} \times \widetilde{M} \to \RR$ denote the distance function induced by the metric on $\widetilde{M}$. For any two distinct points $q,r\in \widetilde{M}$ we set
$$  H_{q,r} = \{u\in \widetilde{M}\,|\, \mathrm{d}(q,u) < \mathrm{d}(r,u)\}   .  $$
We recall from \cite[Definition 3.1]{ozols:1974} that
$$  \Delta_q = \bigcap_{g\in\Gamma\setminus\{e\}} H_{q,g\cdot q}  \subseteq \widetilde{M}     $$
is called the \textit{normal fundamental domain of} $\Gamma$ \textit{centered at }$q$. The following result by V. Ozols establishes a connection between normal fundamental domains and cut loci.

\begin{theorem}[{\cite[Corollary 3.11]{ozols:1974}}]
	Let $\pi: \widetilde{M} \to M$ be a Riemannian covering, let $q \in \widetilde{M}$ and let $\Delta_q \subset \widetilde{M}$ be its normal fundamental domain. If its closure satisfies $\overline{\Delta}_q \cap \Cut_{q}(\widetilde{M})=\emptyset$, then 
	$$\Cut_{\pi(q)}(M) = \pi(\partial \Delta_q).$$
\end{theorem}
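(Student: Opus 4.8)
The claim is Ozols' result \cite[Corollary 3.11]{ozols:1974}, so the task is to reconstruct its proof from the setup described. The plan is to work upstairs in $\widetilde{M}$ and transport everything down via $\pi$. Let $q \in \widetilde{M}$ and write $\Gamma$ for the deck group. First I would recall the elementary description of a cut point in terms of minimizing geodesics: a point $x \in M$ lies in $\Cut_{\pi(q)}(M)$ if and only if either there are at least two distinct minimizing geodesics from $\pi(q)$ to $x$, or $x$ is conjugate to $\pi(q)$ along some minimizing geodesic (this is the standard characterization, valid on any complete Riemannian manifold). The hypothesis $\overline{\Delta}_q \cap \Cut_q(\widetilde{M}) = \emptyset$ will be used precisely to rule out the conjugate-point alternative on the relevant region, so that on $\overline{\Delta}_q$ the cut locus of $\pi(q)$ is detected entirely by the failure of uniqueness of minimizers.

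Next I would describe minimizing geodesics in $M$ from $\pi(q)$ in terms of $\widetilde{M}$. Since $\pi$ is a Riemannian covering, a minimizing geodesic in $M$ from $\pi(q)$ to $x$ lifts, starting at $q$, to a geodesic in $\widetilde{M}$ ending at \emph{some} preimage $g \cdot q' $... more precisely: for $x = \pi(u)$, the distance $\mathrm{d}_M(\pi(q), x)$ equals $\min_{g \in \Gamma} \mathrm{d}(q, g \cdot u) = \min_{g\in\Gamma}\mathrm{d}(g^{-1}\cdot q, u)$, and the minimizing geodesics from $\pi(q)$ to $x$ correspond to the geodesics in $\widetilde{M}$ from a nearest point of the orbit $\Gamma \cdot q$ to $u$ that realize this minimum. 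The key geometric observation is then that $u \in \Delta_q$ exactly when $q$ is the \emph{strict} unique nearest orbit point, i.e. $\mathrm{d}(q,u) < \mathrm{d}(g\cdot q, u)$ for all $g \neq e$; that $u \in \partial\Delta_q$ means $q$ is a nearest orbit point but not uniquely so — there is $g \neq e$ with $\mathrm{d}(q,u) = \mathrm{d}(g\cdot q,u)$; and that $u \notin \overline{\Delta}_q$ means $q$ is not even a nearest orbit point. Translating: for $u \in \Delta_q$, there is a unique minimizing geodesic $\pi(q) \to \pi(u)$ and, since $\overline{\Delta}_q$ avoids $\Cut_q(\widetilde M)$, no conjugate point either, so $\pi(u) \notin \Cut_{\pi(q)}(M)$. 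For $u \in \partial\Delta_q$, there are at least two distinct minimizing geodesics (one from $q$, one from $g\cdot q$; I would need to check these project to genuinely distinct geodesics in $M$, using that the two lifts at $q$ are distinct because their endpoints $u$ and $g^{-1}\cdot u$ differ — here properly discontinuous, free action is what guarantees $g^{-1}\cdot u \neq u$), hence $\pi(u) \in \Cut_{\pi(q)}(M)$.

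From these two bullet-point translations the proof assembles quickly. For the inclusion $\pi(\partial\Delta_q) \subseteq \Cut_{\pi(q)}(M)$: every point of $\partial\Delta_q$ produces two distinct minimizers downstairs, as above. For the reverse inclusion $\Cut_{\pi(q)}(M) \subseteq \pi(\partial\Delta_q)$: given $x \in \Cut_{\pi(q)}(M)$, pick $u \in \pi^{-1}(x)$ in the closed fundamental domain $\overline{\Delta}_q$ (possible because $\Gamma \cdot \overline{\Delta}_q = \widetilde{M}$, a standard property of normal fundamental domains). Then $u \in \Delta_q$ or $u \in \partial\Delta_q$. The case $u \in \Delta_q$ is excluded by the first translation together with the hypothesis $\overline{\Delta}_q \cap \Cut_q(\widetilde M) = \emptyset$, since it would force $x \notin \Cut_{\pi(q)}(M)$. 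Hence $u \in \partial\Delta_q$ and $x = \pi(u) \in \pi(\partial\Delta_q)$.

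**Main obstacle.** The delicate point is not the set-theoretic bookkeeping but the geometric step showing that two lifts ending at distinct orbit-related points $u$ and $g^{-1}\cdot u$ project to genuinely \emph{distinct} minimizing geodesics in $M$ (and conversely that distinct minimizers in $M$ correspond to distinct nearest orbit points), and the use of $\overline{\Delta}_q \cap \Cut_q(\widetilde M)=\emptyset$ to simultaneously (i) guarantee that for $u$ in the \emph{closed} domain the geodesic $q \to u$ is still minimizing and not hitting a conjugate point, and (ii) conclude that a unique minimizer upstairs with no conjugate point yields a non-cut point downstairs. Both of these are where one must invoke properness/freeness of the $\Gamma$-action and the covering property of $\pi$ carefully; the rest is formal.
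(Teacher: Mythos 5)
Your argument is correct, and there is nothing in the paper to compare it against: the result is imported verbatim from Ozols (\cite[Corollary 3.11]{ozols:1974}) without proof, and your reconstruction is essentially the standard one. The reduction to the dichotomy ``unique nearest orbit point $\Rightarrow$ unique minimizer and no conjugate point (via $\overline{\Delta}_q\cap\Cut_q(\widetilde M)=\emptyset$)'' versus ``tie between $q$ and some $g\cdot q$ $\Rightarrow$ two distinct minimizers downstairs (distinct because their lifts at $q$ end at $u\neq g^{-1}\cdot u$, by freeness)'' is exactly the right mechanism, and the set-theoretic assembly via $\Gamma\cdot\overline{\Delta}_q=\widetilde M$ is sound. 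The only step you state as an ``observation'' that genuinely requires an argument is the identification $\overline{\Delta}_q=\{u\mid \dd(q,u)\le \dd(g\cdot q,u)\ \forall g\}$ (equivalently, that every point with $q$ among its nearest orbit points is a limit of points with $q$ as strict nearest orbit point): one proves it by sliding along a minimizing geodesic from $q$ to $u$ and checking that interior points of that segment lie in $\Delta_q$, using completeness and freeness of the action. This is a standard property of Dirichlet domains and flagging it without proof is acceptable; it is not a gap in the logic.
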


Hence, to understand the cut locus of the point $\pi(q)\in M \cong \widetilde{M}/\Gamma$ we can study the boundary of the normal fundamental domain $\Delta_q$.
Let $\mathrm{inj}(T_q \widetilde{M})\subseteq T_q \widetilde{M}$ be the domain of injectivity of the exponential map in $\widetilde{ M}$ and put
$$  \widehat{\Delta}_q := (\exp_q|_{\mathrm{inj}(T_q\widetilde{M})})^{-1} (\overline{\Delta_q}) \subseteq T_q \widetilde{M} .$$
Assume in the following that $\overline{\Delta}_{q}\cap \Cut_q(\widetilde{M})=\emptyset$. Then $\exp_q$ maps $\widehat{\Delta}_q$ homeomorphically onto $\overline{\Delta}_q$, since the restriction of $\exp_{q}$ to $\mathrm{inj}(T_q\widetilde{M})$ is a homeomorphism onto its image. With  $K:= \mathrm{inj}(T_{\pi(q)} M)\cup \widetilde{\mathrm{Cut}}_{\pi(q)}(M)$ the diagram
$$  
\begin{tikzcd}
\widehat{\Delta}_{q}  \arrow[r, "D\pi_q", "\approx"'] \arrow[d, "\exp_q"', "\approx"]
& [2.5em]
K \arrow[]{d}{\exp_{\pi(q)} }
\\
\overline{\Delta}_{q} \arrow[]{r}{\pi} & M
\end{tikzcd}
$$
commutes and one checks that the maps
$   D\pi_q|_{\widehat{\Delta}_q}\colon \widehat{\Delta}_p \to K $ and $\exp_q|_{\widehat{\Delta}_q}\colon \widehat{\Delta}_p\to \overline{\Delta}_q  $
are homeomorphisms.
In particular, we see that $\partial\Delta_q$ is homeomorphic to the tangent cut locus $\Cuttil_{\pi(q)}(M)$ and that the exponential map
$$  \exp_{\pi(q)}|_{\widetilde{\mathrm{Cut}}_{\pi(q)}(M)} \colon \widetilde{\mathrm{Cut}}_{\pi(q)}(M) \to \mathrm{Cut}_{\pi(q)}(M) $$
can be understood by considering
$$  \pi|_{\partial\Delta_q} \colon \partial\Delta_q\to \mathrm{Cut}_{\pi(q)}(M) .     $$
In the following we denote by $\langle\cdot,\cdot\rangle$ the standard inner product on $\RR^4$.
The next lemma is easily shown by means of elementary geometry. Thus, we omit its proof.
\begin{lemma} \label{lemma_hqr}
Let $q,r\in\mathbb{S}^3$ be two distinct points.
Let $u = q-r\in \mathbb{R}^4$ and let $E_u$ be the $3$-plane of points in $\RR^4$ orthogonal to $u$.
Then 
$     H_{q,r} = \{ v\in\mathbb{S}^3 \,| \,  \langle v,u\rangle > 0\}  . $
\end{lemma}

We consider $\Delta_{q_0}$, the normal fundamental domain of $\ZZ_p$ centered at $q_0 = (1,0,0,0)\in\mathbb{S}^3$.
For $k\in \{0,\ldots,p-1\}$, we define 
$$   u_k = q_0 - k \cdot q_0 =  \big(1- \cos(\tfrac{2\pi  k }{p}), -\sin(\tfrac{2\pi  k }{p}),0,0\big) . $$
By Lemma \ref{lemma_hqr}, it is clear that
$$  \Delta_{q_0} = \{  r\in\mathbb{S}^3\,|\,  \langle u_k,r\rangle > 0\, \, \text{for} \,\, k= 1,\ldots,p-1\} .     $$
Its boundary is
$$ \partial\Delta_{q_0} = \left\{   r\in \mathbb{S}^3 \ \middle| \ \exists k \in \{1,\dots,p-1\} \text{ with } \left<u_k,r\right>=0, \left<u_{k'},r\right> \geq 0 \ \forall k' \in \{1,\dots,p-1\}\setminus \{k\}\right\}.$$
For $l \in\{1,\ldots,p-1\}$ and numbers $1\leq i_1<i_2<\ldots <i_l \leq p-1$, we define
$$ \widetilde{D}^{(l)}_{i_1,\ldots,i_l}= \left\{r \in \mathbb{S}^3 \ \middle| \ \langle u_{i_1},r\rangle=\dots=\langle u_{i_l},r\rangle =0, \ \langle u_j,r\rangle >0 \ \forall j \in \{1,\dots,p-1\}\setminus \{i_1,\dots,i_l\}\right\}. $$
It is clear that
$$  \partial\Delta_{q_0} =\bigsqcup_{\substack{   l \in\{1,\ldots,p-1\} \\ 1\leq i_1<\ldots <i_l \leq p-1 }}   \widetilde{D}^{(l)}_{i_1,\ldots,i_l}  .  $$
\begin{lemma} \label{lemma_lens_strat_cuttil}
All sets of the form $\widetilde{D}^{(l)}_{i_1,\ldots,i_l}$ are empty except $\widetilde{D}^{(1)}_1$, $\widetilde{D}^{(1)}_{p-1}$ and $\widetilde{D}^{(p-1)}_{1,\ldots,p-1}$.
Consequently, $\partial\Delta_{q_0}$ is the disjoint union of $\widetilde{D}^{(1)}_1$, $\widetilde{D}^{(1)}_{p-1}$ and $\widetilde{D}^{(p-1)}_{1,\ldots,p-1}$.
\end{lemma}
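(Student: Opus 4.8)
The plan is to work directly with the defining inequalities for the vectors $u_k$ and exploit the very special geometry of the points $k\cdot q_0$, which all lie on a single great circle of $\mathbb{S}^3$. First I would record that, writing $q_0=(1,0,0,0)$ and $k\cdot q_0=(\cos\tfrac{2\pi k}{p},-\sin\tfrac{2\pi k}{p},0,0)$, the vector $u_k=q_0-k\cdot q_0$ lies entirely in the $(x_1,x_2)$-plane, so for $r=(r_1,r_2,r_3,r_4)\in\mathbb{S}^3$ the sign of $\langle u_k,r\rangle$ depends only on $(r_1,r_2)$. Thus the whole stratification of $\partial\Delta_{q_0}$ is governed by the angular position of the projection of $r$ onto the first two coordinates: writing $(r_1,r_2)=\rho(\cos\theta,\sin\theta)$ with $\rho\geq 0$, one computes $\langle u_k,r\rangle = \rho\big(\cos\theta-\cos(\theta+\tfrac{2\pi k}{p})\big)$ up to the obvious sign bookkeeping, and the condition $\langle u_k,r\rangle=0$ becomes a condition relating $\theta$ to $\tfrac{2\pi k}{p}$.

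The key observation to extract from this is the following: the hyperplane $\{\langle u_k,\cdot\rangle=0\}$ is the perpendicular bisector in $\mathbb{S}^3$ of $q_0$ and $k\cdot q_0$, hence it consists of the points equidistant from these two. Since the $\mathbb{Z}_p$-orbit of $q_0$ is a regular $p$-gon inscribed in a great circle, the normal fundamental domain $\Delta_{q_0}$ restricted to that great circle is simply the arc consisting of points strictly closer to $q_0$ than to any other orbit point, and its two endpoints are the midpoints of the arcs toward $1\cdot q_0$ and $(p-1)\cdot q_0$ — that is, exactly where $\langle u_1,r\rangle=0$ or $\langle u_{p-1},r\rangle=0$. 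At any such midpoint, $r$ is equidistant from $q_0$ and one neighbour and strictly closer to $q_0$ than to all the others; this forces, for the corresponding boundary stratum to be nonempty, that the index set $\{i_1,\dots,i_l\}$ be either $\{1\}$ or $\{p-1\}$. The only remaining possibility for a nonempty stratum is that $\rho=0$, i.e.\ $r_1=r_2=0$, in which case $\langle u_k,r\rangle=0$ for \emph{all} $k$ simultaneously (because $u_k$ lies in the $(x_1,x_2)$-plane), giving the stratum $\widetilde{D}^{(p-1)}_{1,\ldots,p-1}$, which is the circle $\{(0,0,r_3,r_4)\,|\,r_3^2+r_4^2=1\}$.

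So the main steps are: (i) reduce everything to the plane spanned by the first two coordinates via the observation that all $u_k$ lie there; (ii) show that if $\rho>0$ then at most one of the quantities $\langle u_k,r\rangle$ can vanish, and it can only be $\langle u_1,r\rangle$ or $\langle u_{p-1},r\rangle$ if the remaining ones are to stay nonnegative — this is the "closest point on a regular polygon" argument and is the heart of the matter; (iii) handle the degenerate case $\rho=0$ separately, observing it yields precisely $\widetilde{D}^{(p-1)}_{1,\ldots,p-1}$ and no smaller-index stratum with $l<p-1$ can contain such a point since vanishing of any single $\langle u_k,r\rangle$ together with $\rho=0$ already forces all of them to vanish. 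Combining (i)–(iii) shows every $\widetilde{D}^{(l)}_{i_1,\ldots,i_l}$ other than the three listed is empty, and since we already know $\partial\Delta_{q_0}$ is the disjoint union over all such strata, the second assertion is immediate.

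I expect step (ii) to be the main obstacle: one has to argue carefully that for $r$ with $\rho>0$ the angle $\theta$ cannot be the bisecting angle between $q_0$ and $k\cdot q_0$ for any $k\in\{2,\dots,p-2\}$ without violating $\langle u_{k'},r\rangle\geq 0$ for some other $k'$, and similarly that two of the $\langle u_k,r\rangle$ cannot vanish at once unless $\rho=0$. Concretely, $\langle u_k,r\rangle=0$ with $\rho>0$ pins $\theta$ down to $\tfrac{\pi k}{p}+\tfrac{\pi}{2}$ (mod $\pi$, after sorting out signs), and then $\langle u_{k'},r\rangle = \rho\big(\cos\theta-\cos(\theta+\tfrac{2\pi k'}{p})\big)$ has a sign one reads off from elementary trigonometry; one checks it goes negative for a suitable choice of $k'$ unless $k\in\{1,p-1\}$. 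This is a short but slightly fiddly trigonometric case analysis, and I would present it as such rather than dressing it up.
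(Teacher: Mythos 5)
Your proposal is correct in substance and rests on the same underlying computation as the paper's proof: both reduce everything to the projection of $r$ onto the $(x_1,x_2)$-plane and analyze the sign of $\langle u_k,r\rangle$ as a function of the polar angle there (the paper writes $r=(\widetilde a e^{i\varphi},x+iy)$ and derives $\langle u_1,r\rangle=-2\widetilde a\sin(\tfrac{\pi}{p})\sin(\varphi-\tfrac{\pi}{p})$, which is exactly your $\rho,\theta$ picture). Where you genuinely differ is in the organization: the paper carries out the explicit trigonometric case analysis for $\widetilde D^{(1)}_m$, $2\le m\le p-2$, and then disposes of all strata with $2\le l\le p-2$ with the phrase ``one can argue similarly,'' whereas your observation that for $\rho>0$ the conditions $\langle u_k,r\rangle=0$ and $\langle u_{k'},r\rangle=0$ pin $\theta$ to two incompatible residues mod $\pi$ (so at most one form can vanish), combined with the remark that $\rho=0$ forces \emph{all} forms to vanish, kills every multi-index stratum in one stroke. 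That is cleaner and more complete than what the paper writes down. Two caveats: first, the zero locus of $\langle u_k,\cdot\rangle$ in the plane is $\theta\equiv\tfrac{\pi k}{p}\pmod{\pi}$, not $\tfrac{\pi k}{p}+\tfrac{\pi}{2}$ as you state (and the correct expansion is $\langle u_k,r\rangle=\rho\bigl(\cos\theta-\cos(\theta-\tfrac{2\pi k}{p})\bigr)=-2\rho\sin(\tfrac{\pi k}{p})\sin(\theta-\tfrac{\pi k}{p})$); you flag this as ``sign bookkeeping,'' but it must be fixed before the case analysis can be run. Second, the heart of the matter --- checking that for $\theta\equiv\tfrac{\pi m}{p}\pmod\pi$ with $2\le m\le p-2$ some other $\langle u_{k'},r\rangle$ is strictly negative (e.g.\ $k'=1$ for $\theta=\tfrac{\pi m}{p}$ and $k'=m+1$ for $\theta=\tfrac{\pi m}{p}+\pi$) --- is only sketched in your proposal, while the paper executes it via the inequality $\tan\varphi>\tan(\tfrac{\pi}{p})$; with the corrected angle formula your sketch does go through.
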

\begin{proof}
It is easy to see that
$$      \widetilde{D}^{(p-1)}_{1,\ldots,p-1} = \{ (0,0,x,y)\in\mathbb{S}^3\,|\, (x,y)\in\mathbb{S}^1\} .    $$
Hence, $\widetilde{D}^{(p-1)}_{1,\ldots,p-1}$ is non-empty.
For $l\in\{1,\ldots,p-1\}$, $l\neq \tfrac{p}{2}$, we set
$$\sigma_l = \frac{1-\cos(\tfrac{2\pi  l}{p})}{\sin(\tfrac{2\pi  l}{p})}.  $$
It can be seen directly that
$$\widetilde{D}^{(1)}_1 = \{ (a, \sigma_1 a,x,y)\in \mathbb{S}^3\,|\,a>0\}  \quad \text{and}\quad \widetilde{D}^{(1)}_{p-1} = \{ (a,\sigma_{p-1} a,x,y) \in \mathbb{S}^3\,|\, a>0\}   .    $$
Note that $\sigma_{p-1} = -\sigma_1$.
Let $m\in\{2,\ldots,p-2\}$.
We claim that $\widetilde{D}^{(1)}_{m} = \emptyset$.
Assume that there is a point $r = (a,b,x,y) \in \widetilde{D}^{(1)}_l$.
Then, $\langle r,u_m\rangle = 0$ implies that
$$   b = \sigma_m a \qquad \text{if} \,\, m\neq \frac{p}{2}.      $$
For arbitrary $m \in \{2,\ldots,p-2\}$, we get from $ \langle u_1 + u_{p-1} , r\rangle > 0$ that
\begin{equation} \label{eq_a_positive}
       2(1- \cos({\tfrac{2\pi}{p}})) a > 0    
\end{equation}
which implies that $a > 0$.
In case that $p$ is even and $m = \frac{p}{2}$, it can easily be seen that $a = 0$, yielding a contradiction to inequality \eqref{eq_a_positive}. Thus, $\widetilde{D}_{\frac{p}{2}}^{(1)}=\emptyset$.
Therefore, we assume throughout the rest of the proof that $m\neq \tfrac{p}{2}$.
We consider two separate cases, starting with $2\leq m < \tfrac{p}{2}$.
In this case we have $\sigma_m > 0$, so we see that $b > 0$.
We write $r =  (\widetilde{a} e^{i\varphi}, x+ i y)$ as an element of $\mathbb{C}^2$ with $\widetilde{a}>0$. 
It is clear that we have
$$    \tan \varphi = \frac{1-\cos(\frac{2 \pi m}{p})}{\sin(\tfrac{2\pi m}{p})}  > 0  $$
and we can choose $\varphi \in (0,\tfrac{\pi}{2})$. Since the third and fourth component of $u_1$ are trivial, we can use the Euclidean inner product  on $\RR^2$ to compute that
\begin{eqnarray*}
\langle u_1,r\rangle &=& 
\Bigg\langle \begin{pmatrix} 1- \cos(\tfrac{2\pi}{p})  \\  - \sin(\tfrac{2\pi }{p})    \end{pmatrix} , \begin{pmatrix}    \widetilde{a}\cos(\varphi) \\ \widetilde{a} \sin(\varphi)   \end{pmatrix} \Bigg\rangle_{\RR^2} \\  &=& \Bigg\langle \begin{pmatrix}   0 \\ -2 \sin(\tfrac{\pi}{p})      \end{pmatrix} , \widetilde{a} \begin{pmatrix}   \cos (\varphi - \tfrac{\pi}{p}) \\ \sin(\varphi - \tfrac{\pi}{p}) 
  \end{pmatrix}  \Bigg\rangle_{\RR^2} \\  &=&  -2\widetilde{a} \sin(\tfrac{\pi}{p})\sin(\varphi - \tfrac{\pi}{p}) ,
\end{eqnarray*}
where we rotated the vectors by an angle of $-\frac{\pi}{p}$ to get the second equality.
Note that by our assumption we have $\langle u_1,r \rangle > 0$ which implies
$  \sin(\tfrac{\pi}{p})\sin(\varphi-\tfrac{\pi}{p}) < 0 .$
Since $\varphi < \frac{\pi}{2}$ by assumption, we want to show that $\varphi >\frac{\pi}{p}$.  Then $\sin(\tfrac{\pi}{p})\sin(\varphi-\tfrac{\pi}{p})>0$, which is thus a contradiction.
The inequality $\varphi > \frac{\pi}{p}$ is equivalent to showing that $\tan(\varphi) > \tan(\frac{\pi}{p})$, i.e. that
\begin{equation} \label{eq_trigono} \frac{1 - \cos(\tfrac{2\pi m}{p})}{\sin(\tfrac{2 \pi m}{p})} \stackrel{!}{>} \frac{\sin(\tfrac{\pi}{p})}{\cos(\tfrac{\pi}{p})}.        \end{equation}

Note that since $m < \frac{p}{2}$, we have $\frac{\pi m}{p} < \frac{\pi}{2}$.
Consequently, 
$$     2 \cos(\tfrac{\pi m}{p})^2  < 2 \cos(\tfrac{\pi}{p})^2  .      $$
By standard trigonometry $$2 \cos(\tfrac{\pi m}{p} )^2= 1 + \cos(\tfrac{2 \pi m}{p})$$
and therefore
$$    1 - \cos( \tfrac{2\pi m}{p} )^2 <  2 \cos(\tfrac{\pi}{p})^2 ( 1- \cos(\tfrac{2 \pi m}{p} )    )   .      $$
One checks by direct computation that this is equivalent to
$$    (\sin(\tfrac{2 \pi m}{p}))^2 (\sin( \tfrac{\pi}{p}))^2 <   (1 - \cos(\tfrac{2 \pi m}{p} ))^2  (\cos(\tfrac{\pi}{p}))^2  .  $$
Since all squared terms were positive before squaring, we see that this is equivalent to
$$  \sin(\tfrac{2 \pi m}{p} ) \sin(\tfrac{\pi}{p} ) < (1 - \cos(\tfrac{2 \pi m}{p}  )) \cos(\tfrac{\pi}{p} )         $$
which clearly implies the inequality \eqref{eq_trigono}.
We thus get the desired contradiction in the case $2\leq m<\tfrac{p}{2}$.
The case $\tfrac{p}{2}< m\leq p-2$ can be treated similarly.
One can argue similarly that all sets of the form $\widetilde{D}^{(l)}_{i_1,\ldots,i_l}$ with $2\leq l\leq p-2$ are empty.
\end{proof}
To shorten our notation we write $\widetilde{D}^{(p-1)}$ for $\widetilde{D}^{(p-1)}_{1,\ldots,p-1}$.
Set $p_0 = \pi(q_0)\in L(p;1)$ and recall that $$D\pi_{q_0}\circ (\exp_{q_0}|_{\partial\widehat{\Delta}_{q_0}})^{-1} \colon \partial\Delta_{q_0} \to \Cuttil_{p_0}(L(p;1))  $$
is a homeomorphism. Moreover, the diagram
$$  
\begin{tikzcd}
\partial \widehat{\Delta}_{q_0} \arrow[]{r}{D\pi_{q_0}} \arrow[swap]{d}{\exp_{q_0}} & [2em] \Cuttil_{p_0}(L(p;1))  \arrow[]{d}{\exp_{p_0}}
\\
\partial\Delta_{q_0} \arrow[]{r}{\pi} & \Cut_{p_0}(L(p;1))
\end{tikzcd}
$$
commutes. Here, we obviously consider the restrictions of the maps to the spaces occurring in the diagram, which we drop from the notation for the sake of readability.
We denote the images of the sets $\widetilde{D}^{(1)}_i$ by
$$   \widetilde{C}^{(1)}_i =  (D\pi_{q_0}\circ (\exp_{q_0}|_{\partial\widehat{\Delta}_{q_0}})^{-1}) (\widetilde{D}_i^{(1)}),\quad \text{for}\,\,\,i\in \{1,p-1\}  $$
and similarly
$$   \widetilde{C}^{(p-1)} = (D\pi_{q_0}\circ (\exp_{q_0}|_{\partial\widehat{\Delta}_{q_0}})^{-1} ) (\widetilde{D}^{(p-1)}) .   $$

\begin{prop} \label{prop_lens_cut}
Let $p \in \NN$ with $p \geq 3$ and consider the lens space $L(p;1)$ with a Riemannian metric of constant sectional curvature. Let $\pi\colon \mathbb{S}^3\to L(p;1)$ be the corresponding Riemannian covering and put $p_0:=\pi(1,0,0,0)$.
\begin{enumerate}
    \item  The sets $\widetilde{C}^{(1)}_1,\widetilde{C}^{(1)}_{p-1}$ and $\widetilde{C}^{(p-1)}$ form a locally compact decomposition of the tangent cut locus $\widetilde{\mathrm{Cut}}_{p_0}(L(p;1))$.
Moreover, $\widetilde{C}^{(1)}_1$ and $\widetilde{C}^{(1)}_{p-1}$ are homeomorphic to open $2$-disks and $\widetilde{C}^{(p-1)}$ is homeomorphic to $\mathbb{S}^1$.
\item The cut locus $\mathrm{Cut}_{p_0}(L(p;1))$ admits a locally compact decomposition into
$$   C^{(1)} = \pi(\widetilde{D}^{(1)}_i)  = \exp_{p_0}(\widetilde{C}^{(1)}_{i}), \,\,i\in \{1,p-1\}  $$ and $$ C^{(p-1)} = \pi(\widetilde{D}^{(p-1)}) = \exp_{p_0}(\widetilde{C}^{p-1})  .     $$
The map $\exp_{p_0}|_{\widetilde{C}^{(1)}_{i}} \colon \widetilde{C}^{(1)}_i\to C^{(1)}$ is a homeomorphism for $i\in \{1,p-1\}$.
Under suitable identifications of $\widetilde{C}^{(p-1)}$ and $C^{(p-1)}$ with $\mathbb{S}^1$, the map $$\exp_{p_0}|_{\widetilde{C}^{(p-1)}}\colon\widetilde{C}^{(p-1)} \to C^{(p-1)} $$ can be identified with the standard $p$-fold covering of $\mathbb{S}^1$ by $\mathbb{S}^1$.
\end{enumerate}
Hence, $$\Cut_{p_0}(L(p;1))=C^{(1)}\sqcup C^{(p-1)}$$
is a fibered decomposition of $\Cut_{p_0}(L(p;1))$ and the associated fibrations are fiber bundles.
\end{prop}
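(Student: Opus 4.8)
The plan is to reduce everything to the combinatorics of the boundary $\partial\Delta_{q_0}$, which has already been analyzed. Recall that the map $D\pi_{q_0}\circ(\exp_{q_0}|_{\partial\widehat{\Delta}_{q_0}})^{-1}$ is a homeomorphism from $\partial\Delta_{q_0}$ onto $\Cuttil_{p_0}(L(p;1))$ intertwining $\pi|_{\partial\Delta_{q_0}}$ with $\exp_{p_0}|_{\Cuttil_{p_0}(L(p;1))}$, and that by Lemma \ref{lemma_lens_strat_cuttil} one has the disjoint decomposition $\partial\Delta_{q_0} = \widetilde{D}^{(1)}_1 \sqcup \widetilde{D}^{(1)}_{p-1}\sqcup \widetilde{D}^{(p-1)}$. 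Transporting this through the homeomorphism already yields the decomposition of $\Cuttil_{p_0}(L(p;1))$ into $\widetilde{C}^{(1)}_1, \widetilde{C}^{(1)}_{p-1}, \widetilde{C}^{(p-1)}$ claimed in part (1), so it only remains to identify the three strata up to homeomorphism. For the two two-dimensional strata I would use the explicit description from the proof of Lemma \ref{lemma_lens_strat_cuttil}: the substitution $a\mapsto a/\sqrt{1+\sigma_1^2}$ exhibits $\widetilde{D}^{(1)}_1 = \{(a,\sigma_1 a, x, y)\in\mathbb{S}^3 \mid a>0\}$ as an open hemisphere of a round $2$-sphere, hence an open $2$-disk, and likewise for $\widetilde{D}^{(1)}_{p-1}$; for $\widetilde{D}^{(p-1)} = \{(0,0,x,y)\mid x^2+y^2=1\}$ the identification with $\mathbb{S}^1$ is immediate. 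Local compactness of the decomposition is then inherited from the fact that open disks and circles are locally compact.

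For part (2) the key point is the $\ZZ_p$-action on the strata. Writing points of $\mathbb{S}^3\subset\CC^2$ in complex coordinates and using $\sigma_1 = \tan(\pi/p)$ and $\sigma_{p-1} = -\tan(\pi/p)$, the first complex coordinate of a point of $\widetilde{D}^{(1)}_1$ is $\tfrac{a}{\cos(\pi/p)}e^{i\pi/p}$ with $a>0$, while for $\widetilde{D}^{(1)}_{p-1}$ it is $\tfrac{a}{\cos(\pi/p)}e^{-i\pi/p}$ with $a>0$; hence the generator of $\ZZ_p$, which multiplies both complex coordinates by $e^{2\pi i/p}$, carries $\widetilde{D}^{(1)}_{p-1}$ onto $\widetilde{D}^{(1)}_1$. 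Since these are relative interiors of the two codimension-one faces of the normal fundamental domain $\Delta_{q_0}$ and (as $p\geq 3$) they are identified only with each other by the single deck transformation whose bisector contains them, $\pi$ restricts to an injection, hence a homeomorphism, on each of them, and we set $C^{(1)} := \pi(\widetilde{D}^{(1)}_1) = \pi(\widetilde{D}^{(1)}_{p-1})$; by the intertwining property the same holds for $\exp_{p_0}$ on $\widetilde{C}^{(1)}_1$ and $\widetilde{C}^{(1)}_{p-1}$. On $\widetilde{D}^{(p-1)} = \{(0,z)\mid |z|=1\}$ the generator acts by $z\mapsto e^{2\pi i/p}z$, i.e.\ by rotation through $2\pi/p$, so $\pi|_{\widetilde{D}^{(p-1)}}$, and therefore $\exp_{p_0}|_{\widetilde{C}^{(p-1)}}$, is the standard $p$-fold covering $\mathbb{S}^1\to\mathbb{S}^1$. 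Finally $C^{(1)}$ and $C^{(p-1)}$ are disjoint, since points of $\widetilde{D}^{(1)}_1$ have nonzero and points of $\widetilde{D}^{(p-1)}$ have zero first complex coordinate, a condition invariant under $\ZZ_p$, and together they cover $\Cut_{p_0}(L(p;1)) = \pi(\partial\Delta_{q_0})$; local compactness is again inherited from the disk and circle models.

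For part (3) I would assemble the pieces against Definition \ref{def_fibered_decomp}.(2). Setting $B_1 = C^{(1)}$ and $B_2 = C^{(p-1)}$ and using the intertwining together with the orbit analysis above, one gets $\widetilde{B}_1 = \exp_{p_0}^{-1}(C^{(1)})\cap\Cuttil_{p_0}(L(p;1)) = \widetilde{C}^{(1)}_1\sqcup\widetilde{C}^{(1)}_{p-1}$ and $\widetilde{B}_2 = \widetilde{C}^{(p-1)}$. The map $\exp_{p_0}\colon\widetilde{B}_1\to C^{(1)}$ is the disjoint union of two homeomorphisms onto the connected space $C^{(1)}$, hence the trivial $2$-sheeted covering, while $\exp_{p_0}\colon\widetilde{B}_2\to C^{(p-1)}$ is the $p$-fold covering of $\mathbb{S}^1$; both are fiber bundles with discrete fibre, in particular Hurewicz fibrations, so $\{C^{(1)},C^{(p-1)}\}$ is a fibered decomposition of $\Cut_{p_0}(L(p;1))$ whose associated fibrations are fiber bundles.

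The step I expect to be the main obstacle is the bookkeeping in part (2): one must pin down exactly which boundary faces of $\Delta_{q_0}$ are glued to which under $\ZZ_p$ and check that $\pi$ is injective on each open face. This is a standard feature of Dirichlet domains for free properly discontinuous actions, but in the present case it is cleanest to verify it directly from the explicit values $\sigma_1 = \tan(\pi/p)$, $\sigma_{p-1} = -\tan(\pi/p)$ as indicated above; once that identification is in place, the remaining arguments are routine.
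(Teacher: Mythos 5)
Your proposal is correct and follows essentially the same route as the paper: both transport the stratification $\partial\Delta_{q_0}=\widetilde{D}^{(1)}_1\sqcup\widetilde{D}^{(1)}_{p-1}\sqcup\widetilde{D}^{(p-1)}$ through the homeomorphism with $\Cuttil_{p_0}(L(p;1))$, observe that the generator of $\ZZ_p$ carries $\widetilde{D}^{(1)}_{p-1}$ onto $\widetilde{D}^{(1)}_1$ so that $\pi$ is injective (hence a homeomorphism) on each open face, and identify the quotient of $\widetilde{D}^{(p-1)}\cong\mathbb{S}^1$ as the standard $p$-fold cover. Your version merely supplies more explicit detail (the hemisphere/disk identification, $\sigma_1=\tan(\pi/p)$, the argument-of-first-coordinate check for injectivity and disjointness) than the paper's terser argument.
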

\begin{proof}
The first part is apparent given the identification $\partial\Delta_{q_0} \approx \widetilde{\mathrm{Cut}}_{p_0}(L(p;1))$ and the characterization of the sets $\widetilde{D}^{(1)}_1,\widetilde{D}^{(1)}_{p-1}$ and $\widetilde{D}^{(p-1)}$ in the proof of Lemma \ref{lemma_lens_strat_cuttil}.
For the second part, we note that
$$   \Psi(1, \widetilde{D}^{(1)}_{p-1}) = \widetilde{D}^{(1)}_1 .    $$
Consequently, $\widetilde{D}^{(1)}_1$ and $\widetilde{D}^{(1)}_{p-1}$ are identified under $\pi$.
Furthermore, the restriction of $\pi$ to $\widetilde{D}^{(1)}_{i}$, $i\in\{1,p-1\}$ is a homeomorphism onto its image since it is continuous, injective and a local homeomorphism.
The same properties therefore hold for $\widetilde{C}^{(1)}_1$, $\widetilde{C}^{(1)}_{p-1}$ and the map $\exp_{p_0}$ under the identification $\partial\Delta_{q_0} \cong \widetilde{\mathrm{Cut}}_{p_0}(L(p;1))$.
Recall that 
$$ \widetilde{D}^{(p-1)}    = \{ (0,z)\in\mathbb{S}^3\,|\, z\in\mathbb{S}^1\} ,  $$
thus it is obviously homeomorphic to $\mathbb{S}^1$ and the $\ZZ_p$-action on $\mathbb{S}^3$ becomes the standard $\ZZ_p$-action on $\mathbb{S}^1 $ under this identification.
Since the map $\mathbb{S}^1\to  \mathbb{S}^1/\ZZ_p$ is a $p$-fold covering, this proves the last claim.
\end{proof}

In the following, we want to show that the fibered decomposition of $\Cut_{p_0}(L(p;1))$ is isotropy-invariant with respect to the transitive $SU(2)$-action to obtain a fibered decomposition of the total cut locus of $L(p;1)$ from Theorem \ref{theorem_isotropy_fibered}.


\begin{lemma} \label{lemma_number_of_minimal_geo}
Let $(M,g)$ be a Riemannian manifold and let $q\in M$ be a point.
Furthermore, let $m\geq 2$ be an integer.
Assume that $G$ is a group of isometries of $M$ which fixes $q$.
Let $S_m\subseteq \Cut_q(M)$ be the set of points $r\in \Cut_q(M)$ such that there are precisely $m$ distinct minimal geodesics between $q$ and $r$.
Then $S_m$ is invariant under $G$.
\end{lemma}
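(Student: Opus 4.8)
The plan is to argue directly from the isometry-invariance of the cut locus structure together with the fact that isometries of $M$ that fix $q$ act on geodesics emanating from $q$ in a distance-preserving, bijective way. First I would fix an element $\phi \in G$, so that $\phi$ is an isometry of $M$ with $\phi(q)=q$. Recall the standard fact (already used in the excerpt before the statement) that $\phi \cdot \Cut_q(M) = \Cut_q(M)$, so $\phi$ restricts to a self-map of $\Cut_q(M)$; it suffices to show $\phi(S_m) \subseteq S_m$, since applying the same to $\phi^{-1}$ gives the reverse inclusion and hence $\phi(S_m)=S_m$.

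The key observation is that $\phi$ induces a bijection between the set of minimal geodesics from $q$ to a point $r$ and the set of minimal geodesics from $q$ to $\phi(r)$. Concretely, if $\gamma\colon[0,1]\to M$ is a minimal geodesic with $\gamma(0)=q$ and $\gamma(1)=r$, then $\phi\circ\gamma$ is again a geodesic (isometries send geodesics to geodesics), it has the same length as $\gamma$ (isometries preserve length), it starts at $\phi(q)=q$, and it ends at $\phi(r)$; since $\gamma$ was minimal, so is $\phi\circ\gamma$, because $\mathrm{d}(q,\phi(r)) = \mathrm{d}(\phi(q),\phi(r)) = \mathrm{d}(q,r) = \mathrm{length}(\gamma) = \mathrm{length}(\phi\circ\gamma)$. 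The map $\gamma \mapsto \phi\circ\gamma$ is injective (it has inverse $\eta \mapsto \phi^{-1}\circ\eta$) and surjective onto minimal geodesics from $q$ to $\phi(r)$ by the same argument applied to $\phi^{-1}$. Hence the number of distinct minimal geodesics from $q$ to $r$ equals the number of distinct minimal geodesics from $q$ to $\phi(r)$.

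Combining these: if $r \in S_m$, there are exactly $m$ distinct minimal geodesics from $q$ to $r$, so there are exactly $m$ distinct minimal geodesics from $q$ to $\phi(r)$, and moreover $\phi(r)\in\Cut_q(M)$ by isometry-invariance of the cut locus; therefore $\phi(r)\in S_m$. This shows $\phi(S_m)\subseteq S_m$ for every $\phi\in G$, and applying this to $\phi^{-1}\in G$ yields $S_m\subseteq\phi(S_m)$, whence $\phi(S_m)=S_m$. Since $\phi\in G$ was arbitrary, $S_m$ is invariant under $G$.

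I do not expect a serious obstacle here; the only point requiring a little care is the claim that $\gamma\mapsto\phi\circ\gamma$ is a well-defined bijection on the relevant sets of minimal geodesics, which reduces to the three elementary facts that isometries preserve the geodesic equation, preserve arc length, and are bijective — all standard. One should also make sure that "distinct" is interpreted as distinct as parametrized paths (equivalently, distinct unit tangent vectors at $q$), and note that $\phi$ being a diffeomorphism means distinct geodesics are sent to distinct geodesics, so the count is genuinely preserved.
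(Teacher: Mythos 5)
Your proof is correct and follows essentially the same argument as the paper's: both establish that post-composition with the isometry gives a bijection between minimal geodesics from $q$ to $r$ and those from $q$ to $\phi(r)$, hence preserves the count, and both combine this with the standard invariance of $\Cut_q(M)$ under isometries fixing $q$. The only cosmetic difference is that the paper phrases the surjectivity of this bijection as a contradiction argument using $\Phi_{g^{-1}}$, whereas you state the bijectivity directly.
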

\begin{proof}
Let $\varphi\colon G\times M\to M$ denote the $G$-action and let $\Phi\colon G\times GM\to GM$ denote the induced pointwise $G$-action, given by
$$   \Phi(g,\gamma)(t) =    \Phi_g(\gamma)(t) = \varphi(g,\gamma(t)), \quad \text{for}\,\,\,t\in[0,1],g\in G,\gamma\in GM .   $$
Let $r \in S_m$ and $g \in G$. Since $r$ is a cut point, $ s = \varphi(g,r)\in\Cut_q(M)$.
Let $\gamma_1,\ldots,\gamma_m$ be the $m$ distinct minimal geodesics between $q$ and $r$.
Then $\Phi_g(\gamma_1),\ldots,\Phi_g(\gamma_m)$ are distinct minimal geodesics between $q$ and $s$.
If there was a minimal geodesic $\sigma$ between $q$ and $s$ which is distinct from all $\Phi_g(\gamma_i)$, $i \in \{1,\ldots, m\}$,
then $\Phi_{g^{-1}}(\sigma)$ would be a minimal geodesic joining $q$ and $r$ distinct from $\gamma_1,\ldots,\gamma_m$. This contradicts $r\in S_m$, hence such a $\sigma$ does not exist and we derive that $s \in S_m$ as well. This proves the claim.
\end{proof}

\begin{cor} \label{cor_lens_decomp}
The fibered decomposition of $\Cut_{p_0}(L(p;1))= C^{(1)}\sqcup C^{(p-1)}$ constructed in Proposition \ref{prop_lens_cut} is isotropy-invariant.
\end{cor}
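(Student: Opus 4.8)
The plan is to identify the two components $C^{(1)}$ and $C^{(p-1)}$ of the decomposition from Proposition \ref{prop_lens_cut} as unions of the sets $S_m$ introduced in Lemma \ref{lemma_number_of_minimal_geo}, and then invoke that lemma directly with the group $G=K$ from \eqref{eq_lens_isotropy}. First I would determine, for a point $r\in\Cut_{p_0}(L(p;1))$, how many distinct minimal geodesics join $p_0$ to $r$. Lifting to $\mathbb{S}^3$, minimal geodesics from $p_0$ to $r$ correspond to the lifts of $r$ lying on the boundary $\partial\Delta_{q_0}$ of the normal fundamental domain, equivalently to the faces of $\partial\Delta_{q_0}$ containing a given lift: a point in $\widetilde{D}^{(1)}_1$ or $\widetilde{D}^{(1)}_{p-1}$ has exactly one other lift realizing the distance (it lies on a single wall $\langle u_k,\cdot\rangle=0$), while a point in $\widetilde{D}^{(p-1)}$ lies on all the walls $\langle u_k,\cdot\rangle=0$, $k=1,\dots,p-1$, simultaneously and hence is joined to $p_0$ by exactly $p$ minimal geodesics. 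Thus
$$ C^{(1)} = S_2 \qquad\text{and}\qquad C^{(p-1)} = S_p, $$
where $S_2,S_p\subseteq\Cut_{p_0}(L(p;1))$ are as in Lemma \ref{lemma_number_of_minimal_geo} (and $p\geq 3$, so $2\neq p$ and these are genuinely different strata).

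Once this identification is in place, the corollary is immediate: the group $K$ of \eqref{eq_lens_isotropy} is a group of isometries of $L(p;1)$ fixing $p_0$, so by Lemma \ref{lemma_number_of_minimal_geo} each $S_m$ is $K$-invariant; in particular $k\cdot C^{(1)}=C^{(1)}$ and $k\cdot C^{(p-1)}=C^{(p-1)}$ for every $k\in K$. This is precisely the definition of an isotropy-invariant decomposition, and since $C^{(1)}\sqcup C^{(p-1)}$ was already shown in Proposition \ref{prop_lens_cut} to be a fibered decomposition of $\Cut_{p_0}(L(p;1))$ whose associated fibrations are fiber bundles, nothing further is needed.

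The only genuine content, and the step I expect to require the most care, is the count of minimal geodesics and the resulting identification $C^{(1)}=S_2$, $C^{(p-1)}=S_p$. Here one must be careful that the homeomorphism $\partial\Delta_{q_0}\cong\widetilde{\mathrm{Cut}}_{p_0}(L(p;1))$ and the description of $\exp_{p_0}$ on each stratum from Proposition \ref{prop_lens_cut} really do translate "number of walls of $\partial\Delta_{q_0}$ through a lift of $r$" into "number of minimal geodesics from $p_0$ to $r$" — this uses that $\exp_{p_0}|_{\widetilde{C}^{(1)}_i}$ is a homeomorphism onto $C^{(1)}$ for $i\in\{1,p-1\}$, so a point of $C^{(1)}$ has exactly the two tangent cut vectors coming from $\widetilde{C}^{(1)}_1$ and $\widetilde{C}^{(1)}_{p-1}$, whereas $\exp_{p_0}|_{\widetilde{C}^{(p-1)}}$ is the $p$-fold cover $\mathbb{S}^1\to\mathbb{S}^1$, so a point of $C^{(p-1)}$ has exactly $p$ preimages in $\widetilde{\mathrm{Cut}}_{p_0}(L(p;1))$, each giving a distinct minimal geodesic. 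With that bookkeeping done, the appeal to Lemma \ref{lemma_number_of_minimal_geo} finishes the proof.
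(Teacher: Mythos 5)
Your proposal is correct and follows essentially the same route as the paper: the paper likewise characterizes $C^{(1)}$ and $C^{(p-1)}$ as the sets of cut points joined to $p_0$ by exactly $2$ and exactly $p$ minimal geodesics, respectively (i.e.\ as $S_2$ and $S_p$), and then concludes by Lemma \ref{lemma_number_of_minimal_geo}. Your extra bookkeeping identifying the number of minimal geodesics with the number of preimages in the tangent cut locus is exactly the content the paper delegates to Proposition \ref{prop_lens_cut}.
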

\begin{proof}
By Proposition \ref{prop_lens_cut} we can characterize $C^{(1)}$ and $C^{(p-1)}$ as 
\begin{align*}
   C^{(1)} &= \{ q\in \Cut_{p_0}(L(p;1))\,|\, \text{there are precisely two minimal geodesics joining } p_0 \text{ and } q \} , \\
   C^{(p-1)} &= \{ q\in \Cut_{p_0}(L(p;1))\,|\, \text{there are precisely } p \text{ minimal geodesics joining } p_0 \text{ and } q \}  .  
\end{align*}
Therefore the isotropy invariance is a direct consequence by Lemma \ref{lemma_number_of_minimal_geo}.
\end{proof}

It follows from Theorem \ref{theorem_isotropy_fibered} and Corollary \ref{cor_lens_decomp} that there is a decomposition of $\Cut(L(p;1))$ into sets $A^{(1)}$ and $A^{(p-1)}$ which form a fibered decomposition of $\Cut(L(p;1))$.
We now want to study this decomposition in greater detail.

Recall that we denote the isotropy group of the $SU(2)$-action on $L(p;1)$ by $K$ and computed it in equation \eqref{eq_lens_isotropy}.
In order to better distinguish the various group actions, let
$$  \Phi\colon SU(2)\times \mathbb{S}^3 \to \mathbb{S}^3 \quad \text{and}\quad \varphi\colon SU(2)\times L(p;1)\to L(p;1)    $$
be the actions of $SU(2)$ on $\mathbb{S}^3$ and on $L(p;1)$, respectively. 
Recall that we denoted the $\ZZ_p$-action on $\mathbb{S}^3$ be $\Psi$, see equation \eqref{eq_zp_action}.
If $A\in SU(2)$ we shall also write $\Phi_A$ for the diffeomorphism $\Phi(A,\cdot)\colon \mathbb{S}^3 \to \mathbb{S}^3$ and similarly for the other actions.

The fibered decomposition of $\Cut(L(p;1))$ is given as follows.
For $l\in\{1,p-1\}$, we have
$$  A^{(l)} = \{ (q,r)\in \Cut(L(p;1))\,|\, r\in \varphi_A(C^{(l)}), A\in SU(2)  \,\,\text{such that}\,\,\mathrm{pr}(A) = q \}   , $$
where $\mathrm{pr}\colon SU(2)\to L(p;1)$ is the canonical projection.
We denote the preimages of $A^{(1)}$ and $A^{(p-1)}$ in the total tangent cut locus by $\widetilde{A}^{(1)}$ and $\widetilde{A}^{(p-1)}$.
Explicitly, we have
$$  \widetilde{A}^{(1)} = \{ (q,v)\in\Cuttil(L(p;1))\,|\,  v\in (D \varphi_A)_{p_0} (\widetilde{C}^{(1)}_1\cup \widetilde{C}^{(1)}_{p-1})    , \,\,A\in SU(2)\,\,\text{such that}\,\,\mathrm{pr}(A) = q \}          $$ and similarly for $\widetilde{A}^{(p-1)}$.
By Proposition \ref{prop_lens_cut}, Corollary \ref{cor_lens_decomp} and Theorem \ref{theorem_isotropy_fibered} we obtain that $\widetilde{A}^{(1)}\to A^{(1)}$ is a $2$-fold covering and that $\widetilde{A}^{(p-1)}\to A^{(p-1)}$ is a $p$-fold covering, where we allow coverings to be trivial, i.e. the total space of the covering might not be connected.
We want to show that $\widetilde{A}^{(1)}$ consists of two connected components which implies that $\widetilde{A}^{(1)}\to A^{(1)}$ is a trivial covering.

\begin{lemma}
The set $\widetilde{C}^{(1)}_1\subseteq T_{p_0}L(p;1)$ is isotropy-invariant with respect to the induced $SU(2)$-action in the tangent bundle $TL(p;1)$. More precisely if $A\in K$, then $(D\varphi_A)_{p_0}(\widetilde{C}^{(1)}_1) = \widetilde{C}^{(1)}_1$.
The same holds for $\widetilde{C}^{(1)}_{p-1}$.
\end{lemma}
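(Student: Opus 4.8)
The plan is to lift the entire situation to the universal cover $\mathbb{S}^3$ and to exploit that the $SU(2)$-action $\Phi$ and the $\ZZ_p$-action $\Psi$ commute. Note first that, by Corollary \ref{cor_lens_decomp} together with the naturality $\exp_{p_0}\circ(D\varphi_A)_{p_0}=\varphi_A\circ\exp_{p_0}$, the map $(D\varphi_A)_{p_0}$ already carries $\exp_{p_0}^{-1}(C^{(1)})\cap\Cuttil_{p_0}(L(p;1))=\widetilde C^{(1)}_1\cup\widetilde C^{(1)}_{p-1}$ onto itself for every $A\in K$; hence the only thing to prove is that $(D\varphi_A)_{p_0}$ does not interchange the two sheets. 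Fix $A\in K$. By \eqref{eq_lens_isotropy} we may write $A=\diag(e^{2\pi i k/p},e^{-2\pi i k/p})$, and then $\Phi_A(q_0)=k\cdot q_0$, so $\Phi_A$ itself does \emph{not} fix $q_0$. The decisive step is to replace it by the modified lift
$$\widetilde A:=\Phi_A\circ\Psi(-k,\cdot)=\Psi(-k,\cdot)\circ\Phi_A\colon\mathbb{S}^3\to\mathbb{S}^3,$$
which is again an isometry of $\mathbb{S}^3$; since $\pi\circ\Psi(-k,\cdot)=\pi$, one still has $\pi\circ\widetilde A=\varphi_A\circ\pi$, i.e.\ $\widetilde A$ is a lift of $\varphi_A$, but now $\widetilde A(q_0)=q_0$. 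A one-line computation using commutativity of $\Phi$ and $\Psi$ moreover gives $\widetilde A(j\cdot q_0)=j\cdot q_0$ for every $j$, so $\widetilde A$ fixes all of the points that define the normal fundamental domain $\Delta_{q_0}$.

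From here everything is essentially forced. Being a composition of a complex scalar multiplication with a matrix in $SU(2)$, the map $\widetilde A$ is the restriction to $\mathbb{S}^3$ of an element of $O(4)$, hence fixes each difference vector $u_j=q_0-j\cdot q_0$. By Lemma \ref{lemma_hqr} it therefore preserves each open half-space $H_{q_0,j\cdot q_0}=\{v\in\mathbb{S}^3\mid\langle v,u_j\rangle>0\}$ as well as each of the sets $\{v\in\mathbb{S}^3\mid\langle v,u_j\rangle=0\}$; consequently $\widetilde A$ preserves $\Delta_{q_0}$, its closure and its boundary $\partial\Delta_{q_0}$, and — comparing with the defining inequalities of the faces $\widetilde D^{(1)}_1$ and $\widetilde D^{(1)}_{p-1}$ — it maps each of these two faces onto itself, not onto one another.

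It remains to transport this to $T_{p_0}L(p;1)$. Since $\widetilde A$ is an isometry of $\mathbb{S}^3$ fixing $q_0$, the linear isometry $(D\widetilde A)_{q_0}$ of $T_{q_0}\mathbb{S}^3$ preserves $\mathrm{inj}(T_{q_0}\mathbb{S}^3)$ and satisfies $\exp_{q_0}\circ(D\widetilde A)_{q_0}=\widetilde A\circ\exp_{q_0}$ (naturality of $\exp$, cf.\ \cite[Proposition 5.20]{lee:2018}); together with $\widetilde A(\widetilde D^{(1)}_1)=\widetilde D^{(1)}_1$ this shows that $(D\widetilde A)_{q_0}$ preserves $W_1:=\big(\exp_{q_0}|_{\partial\widehat{\Delta}_{q_0}}\big)^{-1}(\widetilde D^{(1)}_1)$, and similarly the preimage $W_{p-1}$ of $\widetilde D^{(1)}_{p-1}$. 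Differentiating $\pi\circ\widetilde A=\varphi_A\circ\pi$ at the fixed point $q_0$ gives $D\pi_{q_0}\circ(D\widetilde A)_{q_0}=(D\varphi_A)_{p_0}\circ D\pi_{q_0}$, and since $\widetilde C^{(1)}_1=D\pi_{q_0}(W_1)$ by definition we obtain
$$(D\varphi_A)_{p_0}(\widetilde C^{(1)}_1)=D\pi_{q_0}\big((D\widetilde A)_{q_0}(W_1)\big)=D\pi_{q_0}(W_1)=\widetilde C^{(1)}_1,$$
and the same argument with $W_{p-1}$ yields $(D\varphi_A)_{p_0}(\widetilde C^{(1)}_{p-1})=\widetilde C^{(1)}_{p-1}$. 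The one genuinely delicate point is the construction of the fixing lift $\widetilde A$ in the first paragraph: $\Phi_A$ moves $q_0$, so one must correct it by a deck transformation and verify that the corrected map fixes the whole orbit $\ZZ_p\cdot q_0$ — after which no linear isometry can permute the faces of $\Delta_{q_0}$ nontrivially and the conclusion is immediate.
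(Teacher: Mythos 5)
Your proof is correct, and its overall route coincides with the paper's: you form the corrected lift $\Psi_{-k}\circ\Phi_A$ of $\varphi_A$ that fixes $q_0$, and transport the statement to $\partial\Delta_{q_0}$ via the naturality of the exponential map and the differential of the covering projection. The one place where you genuinely diverge is the verification that this lift preserves $\widetilde{D}^{(1)}_1$ rather than swapping it with $\widetilde{D}^{(1)}_{p-1}$: the paper writes a point of $\widetilde{D}^{(1)}_1$ in explicit coordinates as $((1+i\sigma_1)a,z)$ and computes its image to be $((1+i\sigma_1)a, e^{-4\pi i k/p}z)$, whereas you observe that the lift is a linear isometry of $\RR^4$ fixing the entire orbit $\ZZ_p\cdot q_0$, hence each difference vector $u_j$, hence each defining half-space and hyperplane of the normal fundamental domain, and therefore each face. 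Your version is somewhat more robust — it shows at once that every stratum $\widetilde{D}^{(l)}_{i_1,\ldots,i_l}$ is preserved and does not rely on the explicit parametrization of the faces from Lemma \ref{lemma_lens_strat_cuttil} — while the paper's computation is shorter once that parametrization is already in hand. Both arguments are complete.
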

\begin{proof}
Let $x\in \widetilde{C}^{(1)}_1\subseteq T_{p_0}L(p;1)$ and $A\in K$, i.e. there is a $k\in\{0,\ldots,p-1\}$ such that 
$$  A = \begin{pmatrix} e^{\frac{2\pi i k}{p}} & 0 \\ 0 & e^{-\frac{2\pi i k }{p}}   \end{pmatrix} .     $$
It holds that $(\Psi_{-k}\circ\Phi_A)(q_0) = q_0$.
We want to show that $(D\varphi_A)_{p_0}(x)\in \widetilde{C}^{(1)}_1$. Consider the following diagram.
$$
\begin{tikzcd}
\mathbb{S}^3 \arrow[]{r}{\Phi_A} & [4em] \mathbb{S}^3 \arrow[]{r}{\Psi_{-k}} & [4em] \mathbb{S}^3 \\
T_{q_0}\mathbb{S}^3 \arrow[]{u}{\exp_{q_0}} \arrow[]{r}{(D\Phi_A)_{q_0}} \arrow[swap]{d}{D\pi_{q_0}} 
&
T_{\Phi_A(q_0)}\mathbb{S}^3 \arrow[]{u}{\exp_{\Phi_A(q_0)}} \arrow[]{r}{(D\Psi_{-k})_{\Phi_A(q_0)}}   \arrow[swap]{d}{D\pi_{\Phi_A(q_0)}}  
& T_{q_0}\mathbb{S}^3 \arrow[swap]{u}{\exp_{q_0}} \arrow[]{d}{D\pi_{q_0}}
\\
T_{p_0}L(p;1) \arrow[swap]{r}{(D\varphi_A)_{p_0}} & T_{p_0}L(p;1) \arrow[swap]{r}{\id} & T_{p_0}L(p;1)
\end{tikzcd}
$$
The lower two squares commute by definition of $\varphi$ and the fact that the induced action of $\Psi$ on $L(p;1)$ is trivial.
The upper two squares commute by the naturality of the exponential map.
Note that all arrows in the lower two squares are isomorphisms.

If we restrict to $\Cuttil_{p_0}(L(p;1))$ and to $\partial\Delta_{q_0}$, respectively, we obtain a commutative diagram
$$
\begin{tikzcd}
\partial\Delta_{q_0} \arrow[]{r}{ \Psi_{-k}\circ\Phi_A} \arrow[swap]{d}{D\pi_{q_0}\circ \exp_{q_0}^{-1}} 
& [4em]
\partial\Delta_{q_0} \arrow[]{d}{D\pi_{q_0}\circ \exp_{q_0}^{-1}}
\\
\Cuttil_{p_0}(L(p;1)) \arrow[]{r}{(D\varphi_A)_{p_0}} & \Cuttil_{p_0}(L(p;1)).
\end{tikzcd}
$$

By the proof of Lemma \ref{lemma_lens_strat_cuttil} we can write $ y = (\exp_{q_0}\circ(D\pi)_{q_0}^{-1})(x)\in \widetilde{D}^{(1)}_1$ as
$$ y  = ((1 + i \sigma_1) a, z) \quad \text{where} \,\,\, a> 0, \,\, z\in\CC,   $$
and where $\sigma_1$ was defined in the proof of Lemma \ref{lemma_lens_strat_cuttil}.
Then it follows that 
$$   (\Psi_{-k}\circ\Phi_A) (  y) = \Big( (1 + i \sigma_1) a, e^{-\frac{4 \pi i k}{p}}  z \Big),  $$
which is again an element of $\widetilde{D}^{(1)}_1$.
Consequently, $(D\varphi_A)_{p_0}(x)\in \widetilde{C}^{(1)}_1$.
The argument for $\widetilde{C}^{(1)}_{p-1}$ is analogous.
\end{proof}

By the previous lemma, the sets
$$  \widetilde{A}^{(1)}_1 = \{ (q,v)\in\Cuttil(L(p;1))\,|\,  v\in (D \varphi_A)_q( \widetilde{C}^{(1)}_1)    , \,\, A\in SU(2)\,\,\text{such that}\,\,\mathrm{pr}(A) = q \}          $$
and 
$$  \widetilde{A}^{(1)}_{p-1} = \{ (q,v)\in\Cuttil(L(p;1))\,|\,  v\in (D \varphi_A)_q (\widetilde{C}^{(1)}_{p-1})    , \,\,A\in SU(2)\,\,\text{such that}\,\,\mathrm{pr}(A) = q \}          $$
are well-defined.
Moreover, we clearly have $\widetilde{A}^{(1)} = \widetilde{A}^{(1)}_{1} \sqcup \widetilde{A}^{(1)}_{p-1} $. 
Since $\widetilde{A}^{(1)}\to A^{(1)}$ is a fiber bundle by Theorem \ref{theorem_isotropy_fibered}, we now see that $\widetilde{A}^{(1)}\to A^{(1)}$ is a trivial $2$-fold covering.
This implies that
$$  \secat(\widetilde{A}^{(1)}\to A^{(1)}) = 1 .    $$

\begin{theorem} \label{theorem_lens}
Let $p \in \NN$ with  $p \geq 3$ and consider the lens space $L(p;1)$ with a metric of constant sectional curvature.
Then
$$   6 \leq \GC(L(p;1))\leq 7 .    $$
\end{theorem}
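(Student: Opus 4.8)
The plan is to read off the upper bound from Theorem~\ref{theorem_fibered_decomp} applied to the fibered decomposition $\Cut(L(p;1)) = A^{(1)} \sqcup A^{(p-1)}$ constructed above, and to obtain the lower bound from the general inequality $\GC \geq \TC$ combined with a cohomological estimate for $\TC(L(p;1))$.

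\emph{Upper bound.} By Proposition~\ref{prop_lens_cut}, Corollary~\ref{cor_lens_decomp} and Theorem~\ref{theorem_isotropy_fibered} the sets $A^{(1)}$ and $A^{(p-1)}$ form a fibered decomposition of $\Cut(L(p;1))$, so Theorem~\ref{theorem_fibered_decomp} gives
$$ \GC(L(p;1)) \leq \secat\big(\widetilde{A}^{(1)} \to A^{(1)}\big) + \secat\big(\widetilde{A}^{(p-1)} \to A^{(p-1)}\big) + 1 . $$
We already observed that $\widetilde{A}^{(1)} = \widetilde{A}^{(1)}_1 \sqcup \widetilde{A}^{(1)}_{p-1} \to A^{(1)}$ is a trivial two-fold covering, so $\secat(\widetilde{A}^{(1)} \to A^{(1)}) = 1$. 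For the other summand I would use Schwarz's inequality $\secat(\pi) \leq \cat(B)$ for a fibration $\pi \colon E \to B$ from \cite[Theorem 18]{schwarz:1966}. By Lemma~\ref{lemma_fiber_bundle}, $A^{(p-1)}$ is a fiber bundle over the $3$-manifold $L(p;1)$ with fiber $C^{(p-1)} \cong \mathbb{S}^1$, hence a closed connected $4$-manifold, and the classical dimension bound yields $\cat(A^{(p-1)}) \leq \dim A^{(p-1)} + 1 = 5$. Thus $\secat(\widetilde{A}^{(p-1)} \to A^{(p-1)}) \leq 5$, and altogether $\GC(L(p;1)) \leq 1 + 5 + 1 = 7$.

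\emph{Lower bound.} By the remark following Definition~\ref{def_gc} we have $\GC(L(p;1)) \geq \TC(L(p;1))$, so it suffices to prove $\TC(L(p;1)) \geq 6$. I would argue by zero-divisor cup-length sharpened by strict category weight, with $\ZZ_p$-coefficients. Write $H^\ast(L(p;1);\ZZ_p)$ with generators $y \in H^1$ and $x = \beta_p(y) \in H^2$, where $\beta_p$ denotes the mod-$p$ Bockstein; using the universal coefficient theorem one checks that $\beta_p(y)$ is in fact a \emph{generator} of $H^2(L(p;1);\ZZ_p) \cong \ZZ_p$, while $x \cup y$ generates $H^3 \cong \ZZ_p$ by Poincar\'e duality. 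In $H^\ast(L(p;1)^2;\ZZ_p)$ put $\bar y = y\otimes 1 - 1\otimes y$ and $\bar x = x\otimes 1 - 1\otimes x = \beta_p(\bar y)$; these are zero-divisors, and since the mod-$p$ Bockstein raises strict category weight by one their weights satisfy $\mathrm{wgt}(\bar y) \geq 1$ and $\mathrm{wgt}(\bar x) \geq 2$. Expanding, $\bar x^2 = -2\,(x\otimes x)$, which is nonzero precisely because $-2$ is invertible modulo $p$ — this is the only place where the hypothesis $p \geq 3$ is used, and indeed for $p = 2$, i.e.\ for $\RR P^3$, this class vanishes — and then $\bar x^2 \cdot \bar y = -2\big((x y)\otimes x - x\otimes(x y)\big)$ is again nonzero by the K\"unneth theorem and Poincar\'e duality. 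Hence $0 \neq \bar x^2\bar y$ satisfies $\mathrm{wgt}(\bar x^2\bar y) \geq 2+2+1 = 5$, which gives $\TC(L(p;1)) \geq 6$.

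\emph{Expected difficulty.} The upper bound is a routine application of Section~\ref{sec_fibered_decomp} once one has identified $\dim A^{(p-1)} = 4$ and recalled $\secat(\widetilde{A}^{(1)} \to A^{(1)}) = 1$; no real obstacle arises there. The delicate part is the lower bound: one must carefully check that $\beta_p(y)$ generates $H^2(L(p;1);\ZZ_p)$ for \emph{all} $p \geq 3$ — for composite $p$ the ring $\ZZ_p$ is not a field, so the K\"unneth formula carries $\mathrm{Tor}$-terms that must be kept in sight — and one must invoke the precise statement that the mod-$p$ Bockstein increases strict category weight. As an independent partial check, Theorem~\ref{theorem_lower_bound} together with \eqref{eq_cut_inequality} already yields $\GC(L(p;1)) \geq \secat(\widetilde{A}^{(p-1)} \to A^{(p-1)})$, which one can bound from below by studying the cohomology of the $4$-manifold $A^{(p-1)}$ and the classifying map of the $p$-fold covering $\widetilde{A}^{(p-1)} \to A^{(p-1)}$.
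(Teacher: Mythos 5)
Your upper bound is exactly the paper's argument: the fibered decomposition $\Cut(L(p;1))=A^{(1)}\sqcup A^{(p-1)}$, the observation $\secat(\widetilde{A}^{(1)}\to A^{(1)})=1$ from the trivial double cover, and $\secat(\widetilde{A}^{(p-1)}\to A^{(p-1)})\leq \cat(A^{(p-1)})\leq \dim A^{(p-1)}+1=5$ via Schwarz and the dimension bound for the $4$-dimensional circle bundle $A^{(p-1)}$. Where you diverge is the lower bound: the paper simply cites Farber--Grant for $\TC(L(p;1))=6$, whereas you reprove the inequality $\TC(L(p;1))\geq 6$ from scratch via the category weight of $\bar x^2\bar y$ with $\bar x=\beta_p(\bar y)$. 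This is in fact essentially Farber and Grant's own proof of the cited result, so you gain self-containedness at the cost of having to invoke their theorem that a stable operation of sufficient excess gives $\mathrm{wgt}(\theta(\bar u))\geq 2$ anyway. Two small points in your version deserve tightening. First, the K\"unneth worry you flag for composite $p$ is actually vacuous here: every group $H^k(L(p;1);\ZZ_p)$ is a free $\ZZ_p$-module of rank one, so the cross product $H^*(L;\ZZ_p)\otimes_{\ZZ_p}H^*(L;\ZZ_p)\to H^*(L\times L;\ZZ_p)$ is an isomorphism with no Tor terms regardless of whether $p$ is prime; you should say this rather than leave it as a caveat. Second, the nonvanishing of $\bar x^2=-2\,(x\otimes x)$ and of $\bar x^2\bar y$ requires only $2\not\equiv 0\pmod p$, not that $-2$ be \emph{invertible} mod $p$ (for $p=4$ or $p=6$ the element $2$ is a zero divisor but nonzero, and nonzero is all you need since $x\otimes x$ and $xy\otimes x$, $x\otimes xy$ generate free $\ZZ_p$-summands). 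With those repairs your argument is correct and matches the paper's conclusion.
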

\begin{proof}
M. Farber and M. Grant have shown in \cite[Corollary 15]{farber:2008grant} that the topological complexity of $L(p;1)$ is $\TC(L(p;1)) = 6$, which yields $\GC(L(p;1))\geq \TC(L(p;1))=6$, see \cite[Remark 1.9]{reciomitter:2021}.
By Theorem \ref{theorem_fibered_decomp} we have
\begin{equation} \label{eq_up_lens}
      \GC(L(p;1)) \leq \secat(\widetilde{A}^{(1)}\to A^{(1)}) + \secat(\widetilde{A}^{(p-1)}\to A^{(p-1)}) + 1 .   
\end{equation}
As argued above, we have $ \secat(\widetilde{A}^{(1)}\to A^{(1)}) = 1$.
Recall that $A^{(p-1)}$ is a circle bundle over $L(p;1)$, therefore it is $4$-dimensional and we get
$$   \secat(\widetilde{A}^{(p-1)}\to A^{(p-1)}) \leq \cat(A^{(p-1)}) \leq 5     $$
by \cite[Theorem 18]{schwarz:1966} and \cite[Theorem 1.50]{cornea:2003}.
Thus, the inequality \eqref{eq_up_lens} gives the upper bound $\GC(L(p;1))\leq 7$.
\end{proof}

\bibliography{lit}

\providecommand{\bysame}{\leavevmode\hbox to3em{\hrulefill}\thinspace}
\providecommand{\MR}{\relax\ifhmode\unskip\space\fi MR }
\providecommand{\MRhref}[2]{%
  \href{http://www.ams.org/mathscinet-getitem?mr=#1}{#2}
}
\providecommand{\href}[2]{#2}
\begin{thebibliography}{BGRT14}

\bibitem[Ani06]{anisov:2006}
Sergei Anisov, \emph{Cut loci in lens manifolds}, C. R. Math. Acad. Sci. Paris
  \textbf{342} (2006), no.~8, 595--600.

\bibitem[BCV18]{blaszczyk:2018}
Zbigniew B{\l}aszczyk and Jos\'{e}~Gabriel Carrasquel-Vera, \emph{Topological
  complexity and efficiency of motion planning algorithms}, Rev. Mat. Iberoam.
  \textbf{34} (2018), no.~4, 1679--1684.

\bibitem[Bes78]{besse:1978}
Arthur~L. Besse, \emph{Manifolds all of whose geodesics are closed}, Ergebnisse
  der Mathematik und ihrer Grenzgebiete, vol.~93, Springer-Verlag, Berlin-New
  York, 1978.

\bibitem[BGRT14]{basabe:2014}
Ibai Basabe, Jes\'{u}s Gonz\'{a}lez, Yuli~B. Rudyak, and Dai Tamaki,
  \emph{Higher topological complexity and its symmetrization}, Algebr. Geom.
  Topol. \textbf{14} (2014), no.~4, 2103--2124.

\bibitem[Bre13]{bredon:2013}
Glen~E Bredon, \emph{Topology and geometry}, Graduate Texts in Mathematics,
  vol. 139, Springer Science \& Business Media, 2013.

\bibitem[BtD95]{broecker:85}
Theodor Br\"{o}cker and Tammo tom Dieck, \emph{Representations of compact {L}ie
  groups}, Graduate Texts in Mathematics, vol.~98, Springer-Verlag, New York,
  1995.

\bibitem[CLOT03]{cornea:2003}
Octav Cornea, Gregory Lupton, John Oprea, and Daniel Tanr\'e,
  \emph{Lusternik-{S}chnirelmann category}, Mathematical Surveys and
  Monographs, vol. 103, American Mathematical Society, Providence, RI, 2003.

\bibitem[Far03]{farber:2003}
Michael Farber, \emph{Topological complexity of motion planning}, Discrete
  Comput. Geom. \textbf{29} (2003), no.~2, 211--221.

\bibitem[Far06]{farber:2006}
\bysame, \emph{Topology of robot motion planning}, Morse theoretic methods in
  nonlinear analysis and in symplectic topology, NATO Sci. Ser. II Math. Phys.
  Chem., vol. 217, Springer, Dordrecht, 2006, pp.~185--230.

\bibitem[Far08]{farber:2008}
\bysame, \emph{Invitation to topological robotics}, Zurich Lectures in Advanced
  Mathematics, European Mathematical Society (EMS), Z\"urich, 2008.

\bibitem[FG08]{farber:2008grant}
Michael Farber and Mark Grant, \emph{Robot motion planning, weights of
  cohomology classes, and cohomology operations}, Proceedings of the American
  Mathematical Society \textbf{136} (2008), no.~9, 3339--3349.

\bibitem[GC19]{garcia:2019}
Jose~Manuel Garc{\'\i}a-Calcines, \emph{A note on covers defining relative and
  sectional categories}, Topology Appl. \textbf{265} (2019), 106810, 14.

\bibitem[GPVL15]{gilkey:2015}
Peter Gilkey, JeongHyeong Park, and Ram\'{o}n V\'{a}zquez-Lorenzo,
  \emph{Aspects of differential geometry. {II}}, Synthesis Lectures on
  Mathematics and Statistics, vol.~16, Morgan \& Claypool Publishers,
  Williston, VT, 2015.

\bibitem[Hat02]{hatcher:2002}
Allen Hatcher, \emph{Algebraic topology}, Cambridge University Press, 2002.

\bibitem[Hel78]{helgason:78}
Sigurdur Helgason, \emph{Differential geometry, {L}ie groups, and symmetric
  spaces}, Pure and Applied Mathematics, vol.~80, Academic Press, Inc.
  [Harcourt Brace Jovanovich, Publishers], New York-London, 1978.

\bibitem[KM02]{kalliongis:2002}
John Kalliongis and Andy Miller, \emph{Geometric group actions on lens spaces},
  Kyungpook Math. J. \textbf{42} (2002), no.~2, 313--344.

\bibitem[Lee18]{lee:2018}
John~M. Lee, \emph{Introduction to {R}iemannian manifolds}, second ed.,
  Graduate Texts in Mathematics, vol. 176, Springer, Cham, 2018.

\bibitem[MS21]{mescher:2021}
Stephan Mescher and Maximilian Stegemeyer, \emph{Geodesic complexity of
  homogeneous {R}iemannian manifolds}, arXiv:2105.09215, to appear in: Algebr.
  Geom. Topol., 2021.

\bibitem[Ozo74]{ozols:1974}
Vilnis Ozols, \emph{Cut loci in {R}iemannian manifolds}, Tohoku Math. J. (2)
  \textbf{26} (1974), 219--227.

\bibitem[RM21]{reciomitter:2021}
David Recio-Mitter, \emph{Geodesic complexity of motion planning}, J. Appl. and
  Comput. Topology \textbf{5} (2021), 141--178.

\bibitem[Sak77]{Sakai3}
Takashi Sakai, \emph{On cut loci of compact symmetric spaces}, Hokkaido Math.
  J. \textbf{6} (1977), no.~1, 136--161.

\bibitem[Sak78a]{Sakai2}
\bysame, \emph{Cut loci of compact symmetric spaces}, Minimal submanifolds and
  geodesics ({P}roc. {J}apan-{U}nited {S}tates {S}em., {T}okyo, 1978),
  North-Holland, Amsterdam-New York, 1978, pp.~193--207.

\bibitem[Sak78b]{Sakai1}
\bysame, \emph{On the structure of cut loci in compact {R}iemannian symmetric
  spaces}, Math. Ann. \textbf{235} (1978), no.~2, 129--148.

\bibitem[Sch66]{schwarz:1966}
Albert~S. Schwarz, \emph{The genus of a fiber space}, Amer. Math. Soc. Transl.
  \textbf{55} (1966), 49--140.

\bibitem[Ste51]{steenrod:2016}
Norman Steenrod, \emph{The topology of fibre bundles}, Princeton Mathematical
  Series, vol. 14, Princeton University Press, Princeton, N. J., 1951.

\end{thebibliography}
 \bibliographystyle{amsalpha}
 
\end{document}